\NewDocumentCommand{\ceil}{s O{} m}{%
  \IfBooleanTF{#1} % starred
    {\left\lceil#3\right\rceil} % \ceil*[..]{..}
    {#2\lceil#3#2\rceil} % \ceil[..]{..}
}
\newcommand{\tabincell}[2]{\begin{tabular}{@{}#1@{}}#2\end{tabular}}
\newtheorem{theorem}{Theorem }
\newtheorem{assumption}{Assumption}[]
\newtheorem{lemma}{Lemma}[]
\newtheorem{definition}{Definition}[section]
\newtheorem{remark}{Remark}
\newtheorem{proposition}{Proposition}[]
\def\bko{{\rm 1\kern-.17em l}}
\newcolumntype{C}[1]{>{\centering\arraybackslash}p{#1}}
\newcommand{\an}[1]{{\color{black}#1}}
\newcommand{\fy}[1]{{\color{black}#1}}
\newcommand{\us}[1]{{\color{black}#1}}
\newcommand{\vs}[1]{{\color{black}#1}}
\newcommand{\vvs}[1]{{\color{black}#1}}
\newcommand{\ssc}[1]{{\color{black}#1}}
\newcommand{\ic}[1]{{\color{black}#1}}
\newcommand{\avs}[1]{{\color{black}#1}}
\newcommand\mtiny[1]{\mbox{\tiny\ensuremath{#1}}}
\def\Fscr{{\cal F}}
\def\Kscr{{\cal K}}
\def\be{\begin{enumerate}}
\def\ee{\end{enumerate}}
\def\st{\mbox{subject to}}
 \newcommand{\remove}[1]{}
\newcommand{\EXP}[1]{\mathsf{E}\!\left[#1\right] }
\newcommand{\pmat}[1]{\begin{pmatrix} #1 \end{pmatrix}}
\def\sF{\mathcal{F}}
\def\Real{\mathbb{R}}
\def\g{\gamma}
\def\e{\epsilon}
\title{On the analysis of variance-reduced and randomized projection variants of single projection schemes  for monotone stochastic variational inequality problems}
\author{Shisheng Cui and   %~\IEEEmembership{Fellow,~OSA,} 
		Uday V.~Shanbhag\footnote{Cui and Shanbhag are with the Department of Indust. and
			Manuf. Engg., Penn. State Univ.,
						  University Park, PA 16802, USA. They are
							  reachable at 
\texttt{\small{suc256,udaybag@psu.edu}}. The authors have been partly funded by NSF Grants 1246887 and  1400217 as well as the Gary and Sheila Bello Chair funds. A preliminary conference version of this work appeared in~\cite{CuiS16}. }}
\begin{document}
\maketitle
%\slugger{mms}{xxxx}{xx}{x}{x--x}%slugger should be set to mms, siap, sicomp, sicon, sidma, sima, simax, sinum, siopt, sisc, or sirev

\begin{abstract}
Classical extragradient schemes and their stochastic counterpart represent a
cornerstone for resolving monotone variational inequality problems.  Yet, such
schemes have a per-iteration complexity of two projections {onto} a convex set
and require two evaluations of the map, the former of which could be relatively
expensive if $X$ is a complicated set. We consider two related avenues where
the per-iteration complexity is significantly reduced: (i) A stochastic
projected reflected gradient ({\bf SPRG}) method requiring a single evaluation
of the map and a single projection; and (ii) A stochastic subgradient
extragradient ({\bf SSE}) method that requires two evaluations of the map, a
single projection onto $X$, and a significantly cheaper projection (onto a
halfspace) computable in closed form. Under a variance-reduced framework
reliant on a sample-average of the map based on an increasing batch-size, we
prove almost sure (a.s.) convergence of the iterates to a random point in the
solution set for both schemes. Additionally, both schemes display a
non-asymptotic rate of $\mathcal{O}(1/K)$ {in terms of the gap function} where
$K$ denotes the number of iterations; notably, both rates match those obtained
in deterministic regimes.  {To address feasibility sets given by the
intersection of a large number of convex constraints, we adapt both of the
aforementioned schemes to a random projection framework.} We then show that
\vvs{the} random projection analogs of both schemes also display a.s.
convergence under a weak-sharpness requirement; furthermore, without imposing
the weak-sharpness requirement, both schemes are characterized by a provable
rate of $\mathcal{O}(1/\sqrt{K})$  in terms of the gap function of the
projection of the averaged sequence onto $X$ as well as the infeasibility of
this sequence.  Preliminary numerics support theoretical findings and the
schemes outperform standard extragradient schemes in terms of the per-iteration
complexity. 
 \end{abstract}
%\begin{keywords}\end{keywords}

%\begin{AMS}\end{AMS}
\allowdisplaybreaks
%\raggedbottom
\section{Introduction}
This paper considers the solution of stochastic variational inequality
problems, a stochastic generalization of the variational inequality
problem. Given a set $X \subseteq \Real^n$ and a map
$F:\Real^n \to \Real^n$, the variational inequality problem VI$(X,F)$
requires finding a point $x^*\in X$ such that
\begin{align} \tag{VI$(X,F)$}
\notag F(x^*)^T(x-x^*)\ge0,\quad\forall x \in X.
\end{align}
In the stochastic generalization, the components of the map $F$ are
expectation-valued; specifically $F_i(x)\triangleq
\mathbb{E}[F_i(x,\xi(\omega))]$, where $\xi:\Omega \to \Real^d$ is a
random variable, $F_i: \Real^n \times \Real^d \to \Real$ is a
single-valued function, and the $\mathbb{E}[\cdot]$ denotes the
expectation and the associated probability space being denoted by
$(\Omega, {\cal F}, \mathbb{P})$. In short, we are interested in a
vector $x^* \in X$ such that 
\begin{align} \tag{SVI$(X,F)$}
\mathbb{E}[F(x^*,\omega)]^T(x-x^*) \geq 0, \quad \forall x \in X, 
\end{align}
where $\mathbb{E}[F(x,\omega)] =
\pmat{\mathbb{E}[F_i(x,\omega)]}_{i=1}^K.$ {The variational inequality problem is an immensely relevant problem that
finds application in engineering, economics, and applied sciences
(cf.~\cite{facchinei2007finite,rockafellar2009variational,iusem2018incremental,chen2017accelerated}).
Increasingly, the stochastic generalization is of relevance and has found
application in the study of a broad class of equilibrium problems under
uncertainty. Of these, sample average approximation (SAA) scheme solves the
expected value of the stochastic mapping which is approximated via the average
over a large number of samples
(cf.~\cite{birge2011introduction,shapiro2014lectures,chen2012stochastic,xu2010sample}).
A counterpart to SAA schemes is the stochastic approximation (SA) methods where
at each iteration, a sample of the stochastic mapping is used
(cf.~\cite{kannan2013addressing,ravat2011characterization,juditsky2011solving}).
Amongst the simplest of SA schemes are analogs of the standard projection-based
schemes, which we review next.

\subsection{Projection-based schemes and their variants} 
Given an $x_0 \in X$, the projection-based scheme (PG) generates a sequence $\{x_k\}$, where 
\begin{align}\tag{PG}
\notag x_{k+1}\coloneqq\Pi_X(x_k-\gamma F(x_k)),
\end{align}
$\Pi_X(y)$ denotes the projection of $y$ onto $X$ and $\gamma$
denotes a suitably small steplength. This method generally requires a
strong monotonicity assumption on $F$ to ensure convergence. An
extension referred to as the extragradient scheme, suggested by Antipin
\cite{antipin1978method} and Korpelevich
\cite{korpelevich1976extragradient}, required that $F$
be merely monotone and Lipschitz continuous over the set $X$. \vvs{However,} this scheme requires two projection steps, as captured by (EG).
\begin{align}
\tag{EG}
\begin{aligned}
\notag x_{k+\frac{1}{2}}&\coloneqq\Pi_X(x_k-\gamma F(x_k)), \\
\notag x_{k+1}&\coloneqq\Pi_X(x_k-\gamma F(x_{k+\frac{1}{2}})).
\end{aligned}
\end{align}
Naturally, when the set $X$ is not necessarily a {\em simple} set, this
projection operation by no means cheap. There have been several schemes in
which merely monotone variational inequality problems can be addressed by
taking a single projection operation and we consider two instances.  In
recent work, a {\em projected reflected gradient} (PRG) method was
proposed by Malitsky~\cite{malitsky2015projected}, requiring a {\bf
single},
	rather than {\bf two}, projections:
\begin{align}
\notag x_{k+1}\coloneqq\Pi_X(x_k-\gamma_kF(2x_k-x_{k-1})). \tag{PRG}
\end{align}
Intuitively, this scheme has a similar structure to the projected
	gradient scheme taking a form with the following key distinction:
		Rather than evaluating the map at $x_k$ (as in (PG)), the map is
evaluated at the reflection of $x_{k-1}$ in $x_k$ which is $x_k -
(x_{k-1}-x_k) = 2x_k - x_{k-1}$. Remarkably, this simple modification
allows for proving convergence of this scheme for merely monotone
Lipschitz continuous maps~\cite{malitsky2015projected}.
Malitsky~\cite{malitsky2015projected} derived  the rate of convergence
of the sequence under a strong monotonicity assumption of the map.  An
alternate modification of the extragradient method was proposed by Censor,
Gibali and Reich and was referred to as the {\em subgradient extragradient
method} (SE) \cite{censor2011subgradient}:
\begin{align}
\begin{aligned}
x_{k+\frac{1}{2}}&\coloneqq\Pi_X(x_k-\gamma_kF(x_k)), \\
%\notag T_k&\triangleq\{w \in H \mid (x_k-\gamma_kF(x_k)-x_{k+\frac{1}{2}})^T(w-x_{k+\frac{1}{2}})\leq0\} \\
x_{k+1}&\coloneqq\Pi_{C_k}(x_k-\gamma_kF(x_{k+\frac{1}{2}})),
\end{aligned} \tag{SE}
\end{align}
where $C_k \triangleq \{\ssc{y} \in \mathbb{R}^n \mid (x_k-\gamma_kF(x_k)-x_{k+\frac{1}{2}})^T(\ssc{y}-x_{k+\frac{1}{2}})\leq0\}$.
In (SE), the two projections are replaced by a projection onto the set and a
second projection onto a halfspace, the latter of which is
computable in closed form. However, no rate of convergence has been
provided in their analysis. A third scheme that employs a single
projection to contend with merely monotone maps is the iterative Tikhonov
regularization (ITR) scheme, a regularized variant of (PG) in which $x_{k+1}$
is updated as per \begin{align}\tag{ITR}
\notag x_{k+1}\coloneqq\Pi_X(x_k-\gamma_k( F(x_k)+\epsilon_k x_k)),
\end{align}
where the steplength sequence $\{\gamma_k\}$ and the regularization sequence $\{\epsilon_k\}$ are suitably chosen positive diminishing sequences~\cite{konnov07equilibrium,yin11nash,kannan11distributed}.

\subsection{Stochastic variational inequality problems} There have been
schemes analogous to (PG) and (EG) in this regime with the key distinction
that an evaluation of the map, namely $F(x_k)$, is replaced by
$F(x_k,\omega_k)$, in the spirit of stochastic
approximation~\cite{robbins51sa}.  A simple stochastic extension
of the standard projection scheme for VI$(X,F)$ leads to a stochastic
approximation scheme~\cite{robbins51sa}:
\begin{align}
\notag x_{k+1} \coloneqq \Pi_X(x_k-\gamma_kF(x_k,\omega_k)). \tag{SPG}
\end{align}
Similarly, an extragradient counterpart to (EG) is (SEG) and is defined below:
\begin{align}
\tag{SEG}
\begin{aligned}
\notag x_{k+\frac{1}{2}}&\coloneqq\Pi_X(x_k-\gamma_kF(x_k,\omega_k)), \\ 
x_{k+1}&\coloneqq\Pi_X(x_k-\gamma_kF(x_{k+\frac{1}{2}},\omega_{k+\frac{1}{2}})). 
\end{aligned}
\end{align}
Jiang and Xu \cite{jiang08stochastic}
appear amongst the first who applied SA methods to solve stochastic
variational inequality problems. An extension of ITR to address merely monotone
stochastic VIs was presented by Koshal, Nedi\'c, and Shanbhag~\cite{koshal2013regularized}.
A regularized smoothing SA method to address nonsmooth stochastic Nash equilibrium problems that lead to stochastic VIs with possibly
non-Lipschitzian and merely monotone mappings was proposed in
\cite{yousefian2013regularized}. There has also been a development of prox-based generalization} of SA methods were developed
(cf.~\cite{nemirovski2009robust,yousefian17smoothing,yousefian2014optimal,yousefian2013self,nemirovski2004prox})
for solving smooth and nonsmooth stochastic convex optimization problems
and variational inequality problems. There has also been an effort to develop block-based schemes for Cartesian stochastic variational inequality problems~\cite{yousefian18stochastic}. 
Fig.~\ref{EG} illustrates {the} (SEG) scheme.
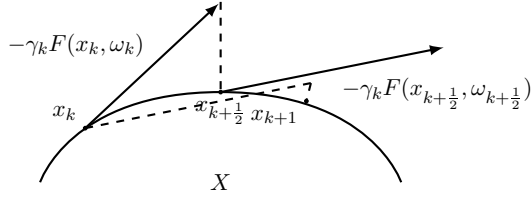
\begin{figure}[htbp]
\centering
\begin{tikzpicture}[thick,scale=.6,every node/.style={scale=.8}]
%\draw[help lines] (0,0) grid (3,2);
\node (o) at (0,0) {$X$};
\fill (-3,1.2)  circle[radius=1.5pt];
%\fill (3,2.8)  circle[radius=1.5pt];
%\fill (0,4)  circle[radius=1.5pt];
\node (d) at (-4,2) {};
\node (e) at (4,2) {};
\node (x) at (-4,0) {};
\node (y) at (0,2) {};
\node (z) at (4,0) {};
%\fill (3.5,2)  circle[radius=1.5pt];
\fill (1.9,1.8)  circle[radius=1.5pt];
\fill (0,2)  circle[radius=1.5pt];
%\fill (3.5,3)  circle[radius=1.5pt];
\draw[thick] plot[smooth,tension=1.5] coordinates {(x) (y) (z)};
%\node [ellipse,draw,line width=.05cm] (c) at (0,0) [minimum size=200] {$X$};
\draw [-latex, black, thick, shorten >= 1] (-3,1.2) node[above left] {$x_k$} -- (0,4) node[midway,above left] {$-\gamma_kF(x_k,\omega_k)$};
\draw [dashed] (0,4) -- (0,2) node[below] {$x_{k+\frac{1}{2}}$};
%\draw (-4,2) -- (4,2) node[right] {$T_k$};
%\draw [dashed] (-3,1.2) -- (3.5,3) node[right] {$x_k-\gamma_kF(x_{k+\frac{1}{2}},\omega_{k+\frac{1}{2}})$};
%\draw [dashed] (2,3) -- (0,4) ;
\draw [dashed] (2,2.2) -- (1.9,1.8) node [below left] {$x_{k+1}$};
\draw [-latex, black, thick, shorten >= 1] (0,2) -- (5,3) node[midway,below right] {$-\gamma_kF(x_{k+\frac{1}{2}},\omega_{k+\frac{1}{2}})$};
\draw [dashed] (-3,1.2) -- (2,2.2) ;
\end{tikzpicture}
\caption{Stochastic extragradient {scheme} (SEG)}
\label{EG}
\end{figure}
Extragradient-based schemes (and their stochastic mirror-prox
counterparts) represent amongst the simplest of {the non-regularized} schemes for monotone
SVIs (cf.~\cite{dang2015convergence,juditsky2011solving}). However, each iteration requires
{\bf two} projection steps, rather than one (as in (SPG)). We summarize much of the prior results in Table \ref{sa}. Given that
this class of Monte-Carlo approximation schemes routinely requires 10s
or 100s of thousands of steps, our interest lies in ascertaining whether
projection-based schemes can be developed requiring a single projection
step per iteration, {\em reducing the per-iteration complexity by a factor of two}. We consider two such schemes given a random point
$x_0 \in X$: \\(i) {\em Stochastic
projected reflected gradient schemes} (SPRG). 
\begin{align}
\notag x_{k+1}\coloneqq\Pi_X(x_k-\gamma_kF(2x_k-x_{k-1},\omega_k));
	\tag{\bf SPRG}
\end{align}
and (ii) {\em Stochastic subgradient extragradient schemes} (SSE).
\begin{align}
\begin{aligned}
x_{k+\frac{1}{2}}&\coloneqq\Pi_X(x_k-\gamma_kF(x_k,\omega_k)), \\
%\notag T_k&\coloneqq\{w \in H \mid (x_k-\gamma_kF(x_k,\omega_k)-x_{k+\frac{1}{2}})^T(w-x_{k+\frac{1}{2}})\leq0\} \\
x_{k+1}&\coloneqq\Pi_{C_k}(x_k-\gamma_kF(x_{k+\frac{1}{2}},\omega_{k+\frac{1}{2}})),
\end{aligned} \tag{\bf SSE}
\end{align}
where $C_k \triangleq \{y \in \mathbb{R}^n \mid (x_k-\gamma_kF(x_k,\omega_k)-x_{k+\frac{1}{2}})^T(y-x_{k+\frac{1}{2}})\leq0\}$. Clearly, the second projection is a simple optimization problem {solvable in closed form}. Solving for $x_{k+1}$, we could obtain an equivalent scheme which requires a single projection (the proof is in appendix).  Fig. \ref{SSE} illustrate the steps of these schemes.
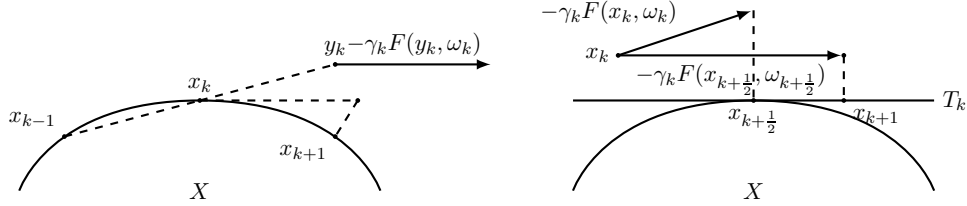
\begin{figure}[!h]
\centering
\begin{tikzpicture}[thick,scale=.6,every node/.style={scale=.8}]
%\draw[help lines] (0,0) grid (3,2);
\node (o) at (0,0) {$X$};
\fill (-3,1.2)  circle[radius=1.5pt];
\fill (3,2.8)  circle[radius=1.5pt];
\fill (0,2)  circle[radius=1.5pt];
\node (d) at (-4,2) {};
\node (e) at (4,2) {};
\fill (3.5,2)  circle[radius=1.5pt];
\fill (3,1.2)  circle[radius=1.5pt];
\node (g) at (2,2) {};
\node (x) at (-4,0) {};
\node (y) at (0,2) {};
\node (z) at (4,0) {};
\draw[thick] plot[smooth,tension=1.5] coordinates {(x) (y) (z)};
%\node [ellipse,draw,line width=.05cm] (c) at (0,0) [minimum size=200] {$X$};
\draw [dashed] (-3,1.2) node[above left] {$x_{k-1}$} -- (0,2) node[above] {$x_k$};
\draw [dashed] (0,2) -- (3,2.8) node[above] {$y_k$};
%\draw (-4,2) -- (4,2) node[right] {$T_k$};
\draw [dashed] (0,2) -- (3.5,2);
\draw [-latex, black, thick, shorten >= 1] (3,2.8) -- (6.5,2.8) node[midway,above] {$-\gamma_kF(y_k,\omega_k)$};
%\draw [dashed] (2,3) -- (0,4) ;
\draw [dashed] (3.5,2) -- (3,1.2) node [below left] {$x_{k+1}$};
%\draw[red] (a) -- (tangent cs:node=c,point={(a)},solution=1);
%(c.center) -- (tangent cs:node=c,point={(a)},solution=2) -- cycle;
\end{tikzpicture}
\quad
\begin{tikzpicture}[thick,scale=.6,every node/.style={scale=.8}]
%\draw[help lines] (0,0) grid (3,2);
\node (o) at (0,0) {$X$};
\fill (-3,3)  circle[radius=1.5pt];
%\fill (0,4)  circle[radius=1.5pt];
\fill (0,2)  circle[radius=1.5pt];
\node (d) at (-4,2) {};
\node (e) at (4,2) {};
\fill (2,3)  circle[radius=1.5pt];
\fill (2,2)  circle[radius=1.5pt];
\node (x) at (-4,0) {};
\node (y) at (0,2) {};
\node (z) at (4,0) {};
\draw[thick] plot[smooth,tension=1.5] coordinates {(x) (y) (z)};
%\node [ellipse,draw,line width=.05cm] (c) at (0,0) [minimum size=200] {$X$};
\draw [-latex, black, thick, shorten >= 1] (-3,3) node[left] {$x_k$} -- (0,4) node[midway, above left] {$-\gamma_kF(x_k,\omega_k)$};
\draw [dashed] (0,4) -- (0,2) node[below] {$x_{k+\frac{1}{2}}$};
\draw (-4,2) -- (4,2) node[right] {$T_k$};
\draw [-latex, black, thick, shorten >= 1] (-3,3) -- (2,3) node[midway,below] {$-\gamma_kF(x_{k+\frac{1}{2}},\omega_{k+\frac{1}{2}})$};
\draw [dashed] (2,3) -- (2,2) node[below right] {$x_{k+1}$};
\end{tikzpicture}
\caption{Left: (SPRG); Right: (SSE)}
\label{SSE}
\end{figure}

\begin{table}[!htb]
\scriptsize
\caption{A review of stochastic approximation schemes for SVIs}
\vspace{-0.3in}
\begin{center}
    \begin{tabular}[t]{ l | l | l | l | l | l | l | c}
    \hline
     Ref. & Applicability & Compact & Avg. & Metric & Rate & A.s. & \# proj. \\ \hline
    %\cite{jiang08stochastic} & Strongly monotone, Lip. & N & Iterates & - & Y \\ 
    \cite{jiang08stochastic} & {Strongly monotone, Lipschitz} & N & N & Iterates & - & Y & 1 \\ 
    \cite{koshal2013regularized} & Monotone, Lipschitz & {N} & N & Iterates & - & Y  & {1}\\ 
    \cite{yousefian2013regularized} & Monotone, non-Lip. & {N} & N & Iterates & - & Y  & {1}\\ 
    \cite{juditsky2011solving} & Monotone, non-Lip. & Y & Y & Gap fn. & $\mathcal{O}(1/\sqrt{K})$ & N  & {1}\\ 
    \cite{hsieh2019convergence} & Strongly monotone, Lip. & N & N & Iterates & $\mathcal{O}(1/K)$ & N  & {1}\\
    \cite{yousefian2014optimal,yousefian17smoothing} & Monotone, non-Lip. & Y & Y & Gap fn. & $\mathcal{O}(1/\sqrt{K})$ & Y & {1} \\ 
    \cite{kannan19optimal} & Strongly pseudo/monotone+weak-sharp & Y & N & MSE & $\mathcal{O}(1/K)$ & Y & {2} \\
    \cite{wang2015incremental} & Strongly monotone, Lip., random proj. & N & N & Iterates & $\mathcal{O}(1/\sqrt{K})$ & Y & {1} \\ 
    \cite{iusem2017extragradient} & {Pseudo}monotone, Lip., var. reduction & N & N & Iterates & $\mathcal{O}(1/K)$ & Y  & {2}\\
     \cite{iusem2018incremental} & Monotone+weak-sharp, Lip., random proj. & N & Y & Dist. fn. & $\mathcal{O}(1/\sqrt{K})$ & Y & 2 \\
     \cite{iusem2018incremental} & Monotone, non-Lip., random proj. & Y & Y & Gap fn. & \tabincell{l}{$\mathcal{O}({K^\delta\ln K}/\sqrt{K})$\\ where $\delta > 0$}  & Y & 1 \\
    \hline
    \hline
    \textbf{v-SPRG} & Monotone+weak-sharp, Lip., var. reduction & Y & Y & Gap fn. & $\mathcal{O}(1/K)$ & Y & 1 \\ 
		& Noise: $\mathbb{E}[\|w(x)\|^2\mid x] \leq \nu_1^2 \|x\|^2 + \nu_2^2 $ a.s. & &&& & \\ \hline
    \textbf{v-SSE} & Monotone, Lip., var. reduction & Y & Y & Gap fn. & $\mathcal{O}(1/K)$ & Y & 1 \\ 
		& Noise: $\mathbb{E}[\|w(x)\|^2\mid x] \leq \nu_1^2 \|x\|^2 + \nu_2^2 $ a.s. & &&& &\\ \hline
    \textbf{r-SPRG} & Monotone+weak-sharp, Lip., random proj. & Y & Y & Gap fn. & $\mathcal{O}(1/\sqrt{K})$ & Y & 1 \\ 
		& Noise: $\mathbb{E}[\|w(x)\|^2\mid x] \leq \nu_1^2 \|x\|^2 + \nu_2^2 $ a.s. & &&& &\\ \hline
    \textbf{r-SSE} & Monotone+weak-sharp, Lip., random proj. & Y & Y & Gap fn. & $\mathcal{O}(1/\sqrt{K})$ & Y & 1 \\ 
		& Noise: $\mathbb{E}[\|w(x)\|^2\mid x] \leq \nu_1^2 \|x\|^2 + \nu_2^2 $ a.s. & &&& &\\ \hline
      \end{tabular}
\end{center}
\label{sa}
\vspace{-0.2in}
\end{table}

\subsection{Incorporating variance reduction and random projections.}  To
reduce the overall computational complexity, we define two variable sample-size
counterparts of ({\bf SPRG}) and ({\bf SEG}), where $N_k$ samples of the map are
utilized at iteration $k$ to approximate the expected map:  (i)
{\bf Variable sample-size (SPRG)}: 
\begin{align}
\begin{aligned}
x_{k+1}&\coloneqq\Pi_X\left(x_k-\gamma_k\tfrac{\sum_{j=1}^{N_k}F(2x_k-x_{k-1},\omega_{j,k})}{N_k}\right)
\end{aligned}, \tag{\bf v-SPRG}
\end{align}
and (ii) {\bf Variable sample-size (SSE)}.
\begin{align}
\begin{aligned}
\notag x_{k+\frac{1}{2}}&\coloneqq\Pi_{X}\left(x_k-\gamma_k\tfrac{\sum_{j=1}^{N_k}F(x_k,\omega_{j,k})}{N_k}\right), \\
%\notag T_k&\coloneqq\{w \in H \mid (x_k-\gamma_kF(x_k,\omega_k)-x_{k+\frac{1}{2}})^T(w-x_{k+\frac{1}{2}})\leq0\} \\
\notag x_{k+1}&\coloneqq\Pi_{C_k}\left(x_k-\gamma_k\tfrac{\sum_{j=1}^{N_k}F(x_{k+\frac{1}{2}},\omega_{j,k+\frac{1}{2}})}{N_k}\right),
\end{aligned} \tag{\bf v-SSE}
\end{align}
where $C_k \triangleq \left\{y \in \mathbb{R}^n \mid \left(x_k-\gamma_k\tfrac{\sum_{j=1}^{N_k}F(x_k,\omega_{j,k})}{N_k}-x_{k+\frac{1}{2}}\right)^T(y-x_{k+\frac{1}{2}})\leq0\right\}$. \\

A difficulty arises when implementing such schemes on a complex set $X$
when $X$ is defined as the intersection of a large number of convex sets.
Inspired by~\cite{wang2015incremental}, we consider extending our work to
random projections when $X$ is defined as the intersection of a
finite number of sets: 
\setlength{\abovedisplayskip}{10pt}
\setlength{\belowdisplayskip}{10pt}
\begin{align*}
X=\bigcap_{i\in \mathcal{I}}X_i,
\end{align*}
where \us{$\mathcal{I}$ is a finite set} and $X_i \subseteq \mathbb{R}^n$
is closed and convex for all $i \in \mathcal{I}$. The key distinction is
that at each iteration, we project onto a random subset $X_{l_k}$ rather
than $X$, where $\{l_k\}$ is a sequence of random variables in the
appropriate steps of (SPRG) and (SSE). In prior work,
Nedi\'{c}~\cite{nedic2010random,nedic2011random} considered random
projection algorithms for convex optimization problems with similarly
defined sets and related schemes were subsequently considered for
nonsmooth convex regimes~
\cite{bertsekas2011incremental,wang2013incremental,wang2015random}.  Wang
and Bertsekas~\cite{wang2015incremental} applied this avenue to
strongly monotone stochastic variational inequality problems by extending
(SPG) to allow for projecting on a subset of constraints via random
projection technique while Iusem, Jofr\'{e}, and Thompson~\cite{iusem2018incremental} extended this framework by incorporating iterative regularization.  We consider analogous generalizations to {\bf (SPRG)} and {\bf
(SSE)}:\\ 

\noindent (i) {\bf Random projections SPRG schemes} {\bf (r-SPRG)}. 
\begin{align}
\notag x_{k+1}\coloneqq\Pi_{l_k}(x_k-\gamma_kF(2x_k-x_{k-1},\omega_k)),
	\tag{\bf r-SPRG}
\end{align}
{where $\Pi_{l_k}$ is defined as projection onto a random subset $X_{l_k}$ and}\\
(ii) {\bf Random projections   SSE schemes} {\bf (r-SSE)}.
\begin{align}
\notag x_{k+\frac{1}{2}}&\coloneqq\Pi_{l_k}(x_k-\gamma_kF(x_k,\omega_k)), \\
%\notag T_k&\coloneqq\{w \in H \mid (x_k-\gamma_kF(x_k,\omega_k)-x_{k+\frac{1}{2}})^T(w-x_{k+\frac{1}{2}})\leq0\} \\
\notag x_{k+1}&\coloneqq\Pi_{C_k}(x_k-\gamma_kF(x_{k+\frac{1}{2}},\omega_{k+\frac{1}{2}})), \tag{\bf r-SSE}
\end{align}
where $C_k \triangleq \{y \in \mathbb{R}^n \mid (x_k-\gamma_kF(x_k,\omega_k)-x_{k+\frac{1}{2}})^T(y-x_{k+\frac{1}{2}})\leq0\}$. 
%Similarly, we can define {\bf Random projected stochastic extragradient schemes} (RPSEG) as
%\begin{align}
%\notag x_{k+\frac{1}{2}}&\coloneqq\Pi_{l_k}(x_k-\gamma_kF(x_k,\omega_k)), \\
%\notag x_{k+1}&\coloneqq\Pi_{l_k}(x_k-\gamma_kF(x_{k+\frac{1}{2}},\omega_{k+\frac{1}{2}})). \tag{RPSEG}
%\end{align}

\avs{\subsection{Jutification and relation to other variance-reduced schemes} 

\noindent (i) {\em Terminology and applicability.} The term ``variance-reduced'' reflects the
usage of increasing accurate approximations of the expectation-valued map, as
opposed to noisy sampled variants that are used in single sample schemes. The
resulting schemes are often referred to as {\em mini-batch} SA schemes and
often achieve deterministic rates of convergence.  Schemes such as
SVRG~\cite{NIPS2013_ac1dd209} and SAGA~\cite{NIPS2014_ede7e2b6} also achieve
deterministic rates of convergence but are customized for finite sum problems
unlike mini-batch schemes that can process expectations over general
probability spaces.  Unlike in mini-batch schemes where increasing batch-sizes
are employed, in schemes such as SVRG, the entire set of samples is
periodically employed for computing a step. \\

\noindent (ii) {\em Weaker assumptions and stronger statements.} The proposed
variance-reduced framework has several crucial benefits that cannot be reaped
in the single-sample regime: (i) Under suitable assumptions, both ({\bf
v-SPRG}) and ({\bf v-SSE}) achieve optimal deterministic rates in terms of
major iterations (projection steps) while achieving near-optimal sample
complexity, i.e. $\mathcal{O}(1/\epsilon^{2+\delta})$. (ii) In addition, both
sets of schemes are equipped with a.s. convergence guarantees, statements which
are seldom obtained for single-sample extragradient schemes (to the best of our
knowledge).\\        

\noindent (iii) {\em Sampling requirements.} Naturally, variance-reduced
schemes can generally be employed only when sampling is relatively cheap compared to the main
computational step (such as computing a projection or a prox.) In terms of
overall sample-complexity, the proposed schemes are near optimal. As $k$ becomes large,
one might question how one might contend with $N_k$ tending to $+\infty$. This
issue does not arise since most schemes of this form are meant to provide
$\epsilon$-approximations. For instance, if $\epsilon = 1$e$-3$, then such a scheme
requires approximately $\mathcal{O}(1$e$3)$ steps. Since $N_k \approx \lceil
k^a \rceil$ and $a > 1$, we require approximately $(\mathcal{O}(1$e$3))^a$
samples. In a setting where multi-core architecture is ubiquitous, such
requirements are not terribly onerous particularly since computational costs
have been reduced from $\mathcal{O}(1$e$6)$ (single-sample) to
$\mathcal{O}(1$e$3)$. It is worth noting that competing schemes such as SVRG
would require taking the full batch-size intermittently and finite-sum problems
routingely have  $1$e$9$ or more samples.  }

\subsection{Contributions} We summarize the key aspects of our schemes in Tables \ref{cont} and elaborate on these next: 
\begin{table}[hpbt]
\scriptsize
\caption{(SRPG) and (SSE) schemes comparison,  $\mathbb{E}[\|w(x)\|^2\mid x] \leq \nu_1^2 \|x\|^2 + \nu_2^2 $}
\vspace{-0.2in}
\begin{center}
    \begin{tabular}[t]{ | c | C{3.4cm} | C{2.8cm} | C{3.4cm} | C{3.8cm} |}
    \hline
     & \multicolumn{2} {c} {Variance-reduced schemes} & \multicolumn{2} {|c|} {Random projection} \\ \cline{2-5}
     & Assump. & Result & Assump. & Result \\ \hline
    \multirow{2}{*}{{\bf (SPRG)}} & {\bf (v-SPRG)}: mono.+Lip., weak-sharpness & $\|x_k-x^*\| \xrightarrow[a.s.]{k \to \infty} 0$ &{\bf (r-SPRG)}:  mono.+Lip., weak-sharpness & $\|x_k-x^*\| \xrightarrow[a.s.]{k \to \infty} 0$  \\ \cline{2-5}
    & \tabincell{l}{{\bf (v-SPRG)}: mono.+Lip.\\ +compactness} & $\mathbb{E}[(G(\bar{x}_K))]\le\mathcal{O}\left(\tfrac{1}{K}\right)$ &\tabincell{l}{{\bf (r-SPRG)}:  mono.+Lip.\\ +compactness} & \tabincell{l}{$\mathbb{E}[(G(\vvs{\Pi_X(\bar{x}_K)}))]\le\mathcal{O}\left(\tfrac{1}{\sqrt{K}}\right)$ \\ $\mathbb{E}[\mbox{dist}(\bar{x}_{K},X)] \leq \mathcal{O}\left(\tfrac{1}{\sqrt{K}}\right)$}  \\ \hline
   \multirow{2}{*}{{\bf (SSE)}} & {\bf (v-SSE)}: mono.+Lip. & $\|x_k-x^*\| \xrightarrow[a.s.]{k \to \infty} 0$ &{\bf (r-SSE)}:  mono.+Lip., weak-sharpness & $\|x_k-x^*\| \xrightarrow[a.s.]{k \to \infty} 0$  \\ \cline{2-5}
    & \tabincell{l}{{\bf (v-SSE)}: mono.+Lip.\\ +compactness} & $\mathbb{E}[(G(\bar{x}_K))]\le\mathcal{O}\left(\tfrac{1}{K}\right)$ &\tabincell{l}{{\bf (r-SSE)}:  mono.+Lip.\\ +compactness} & \tabincell{l}{$\mathbb{E}[(G(\vvs{\Pi_X(\bar{x}_K)})]\le\mathcal{O}\left(\tfrac{1}{\sqrt{K}}\right)$ \\ $\mathbb{E}[\mbox{dist}(\bar{x}_{K},X)] \leq \mathcal{O}\left(\tfrac{1}{\sqrt{K}}\right)$}  \\ \hline
    \end{tabular}
\end{center}
\vspace{-0.2in}
\label{cont}
\end{table}
\\

\noindent (i) In Section 3, we prove that in settings where the maps are
monotone and Lipschitz continuous,  the iterates produced by both {variance
reduced variants of} {\bf (v-SPRG)} and {\bf v-(SSE)} converge almost surely
(a.s.) to a solution, where {\bf (v-SPRG)} requires an additional weak
sharpness requirement. However, without a weak-sharpness requirement, the
gap function for an averaged sequence for both schemes diminishes at the rate
of $\mathcal{O}(1/K)$. We emphasize that our findings for {\bf (v-SPRG)}
match {the best known} deterministic rate of convergence while we weaken the
assumption for the convergence of sequences generated by {\bf (v-SSE)} from
strong monotonicity to mere monotonicity. To the best of our knowledge, no
convergence rate for a deterministic version of ({\bf SSE}) is available in the
literature. \\ 

\noindent (ii) In Section 4, under a weak-sharpness requirement, the sequences produced by {random projection variants  {\bf (r-SPRG)} and {\bf (r-SSE)} are \vvs{shown to converge a.s. to the solution set of the original problem}. Additionally, without weak sharpness, the gap function of the projection of the  averaged sequence on $X$  as well as the infeasibility of the sequence with respect to the feasible set $X$ diminish at the rate of $\mathcal{O}(1/\sqrt{K})$.}\\ 

\noindent (iii) {In Section 5, preliminary numerics are observed support our expectations based on the theoretical findings.}

\section{Background and Assumptions}
We consider the schemes (SPRG) and (SSE) where $x_0 \in X$ is
a random initial point and $\{\gamma_k\}$ denotes the steplength
sequence.  We begin by imposing suitable Lipschitzian and monotonicity assumptions on the map $F$ which
will be valid through the remainder of this paper.
\begin{assumption}[{\bf Monotone and Lipschitz maps}] \label{lip-mon}
\em The mapping $F$ is $L$-Lipschitz continuous and monotone on $\mathbb{R}^n$, i.e. $\forall x, y \in \mathbb{R}^n$, 
$\|F(x)-F(y)\|  \le L\|x-y\|$
 and $(F(x)-F(y))^T(x-y)  \ge0$. \qed
\end{assumption}
{Since $F$ is a monotone map, VI$(X,F)$ may have multiple solutions. We assume that the set of solutions of VI$(X,F)$, {denoted by $X^*$}, is compact and nonempty.}  
\begin{assumption}[{\bf Compactness of $X^*$ and Boundedness of $F$}]
 \label{bd5} \em
{The set $X^*$ is compact and nonempty where $X^*$ denotes the set of solutions of  VI$(X,F)$, i.e.  $X^* \triangleq \{x^* \mid x^* \mbox{ solves } \mbox{VI}(X,F)\}$. There exists a constant $C>0$ such that $\|F(x^*)\|\le C$ for all $x^* \in X^*.$} \qed
%Furthermore, there exists a scalar $B_F$ such that $\$B>0$ such that $\|x-y\|\le B$ for all $x,y \in X.$ 
\end{assumption}
%\begin{assumption}\label{ubound}
%\end{assumption}
{A sufficiency condition for the boundedness of $X^*$ (Assumption~\ref{bd5}) is a  suitable coercivity property of $F$ over the set $X$~\cite[Prop.~2.2.7]{facchinei2007finite}. This then allows for claiming the boundedness of $F$ over $X^*$.}
{For proving almost sure convergence of the iterates, we often impose} a
weak-sharpness requirement on VI$(X,F)$, which requires utilizing
the distance between a point $x$ and a set $X$, denoted by $\mbox{dist}(x,X)$
and defined as $\displaystyle \mbox{dist}(x,X) \triangleq \min_{y \in X}
\|x-y\|.$

\begin{assumption}[{\bf Weak sharpness}]~\label{weak-sharp}
\em The variational inequality problem {\em VI}$(X,F)$ satisfies the weak sharpness property implying
that there exists an $\alpha> 0$ such that for all $x \in X$, $(x-x^*)^TF(x^*) \geq \alpha \mbox{dist}\left(x,X^*\right).$ \qed
\end{assumption}
We assume the presence of a stochastic oracle that can provide a conditionally
unbiased estimator of $F(x)$, given by $F(x,\omega)$ such that
$\mathbb{E}[F(x,\omega) \vs{ \ \mid \ x}] = F(x)$. Define $w_k\triangleq
F(x_k,\omega_k)-F(x_k)$, $\bar{w}_k\triangleq
\tfrac{\sum_{j=1}^{N_k}F(x_k,\omega_{j,k})}{N_k}-F(x_k)$,
$w_{k+1/{2}}\triangleq
F(x_{k+{1}/{2}},\omega_{k+{1}/{2}})-F(x_{k+{1}/{2}})$ and
$\bar{w}_{k+{1}/{2}}\triangleq
\tfrac{\sum_{j=1}^{N_k}F(x_{k+{1}/{2}},\omega_{j,k})}{N_k}-F(x_{k+{1}/{2}})$, where $N_k$ denotes the batch-size of sampled maps $F(x,\omega_{j,k})$ at iteration $k$.
Furthermore, let $\mathcal{F}_k$ denote  the history up to iteration $k$, i.e.,
{\scriptsize
\begin{align*} 
\mathcal{F}_k \triangleq
\left\{x_0,\{F(x_0,\omega_{j,0})\}_{j=1}^{N_0},\{F(x_{1/2},\omega_{j,{1/2}})\}_{j=1}^{N_0},  \cdots  \{F(x_{k-1},\omega_{j,k-1})\}_{j=1}^{N_{k-1}}, \{F(x_{k-1/2},\omega_{j,k-1/2})\}_{j=1}^{N_{k-1}}\right\} \end{align*}}
and $\mathcal{F}_{k+\frac{1}{2}} \triangleq \mathcal{F}_k\cup\{F(x_k,\omega_{j,k})\}_{j=1}^{N_k}$.
In settings where the set $X$ may be unbounded, the assumption that the
conditional second moment $w_k$ is uniformly bounded a.s. is often a stringent
requirement. Instead, we impose a state-dependent assumption on $w_k$.

\begin{assumption}[{\bf State-dependent bound on noise}] \label{moment}
\em At iteration $k$, the following hold in an a.s. sense:
(i) The conditional means $\mathbb{E}[w_k\mid\mathcal{F}_k]$ and
$\mathbb{E}[w_{k+\frac{1}{2}}\mid\mathcal{F}_{k+\frac{1}{2}}]$ are zero
for all $k$ in an a.s. sense; (ii) The conditional second moments are bounded 
in an a.s. sense as follows. $\mathbb{E}[\|w_k\|^2\mid\mathcal{F}_k]\le\vvs{\nu_1^2\|x_k\|^2 + \nu_2^2}$ and
$\mathbb{E}[\|w_{k+\frac{1}{2}}\|^2\mid\mathcal{F}_{k+\frac{1}{2}}]\le\vvs{\nu_1^2\|x_{k+\frac{1}{2}}\|^2 + \nu_2^2}$
for all $k$ in an a.s. sense. \qed 
\end{assumption}
%\begin{assumption}\label{step} The diminishing sequence {$\gamma_k$} is square-summable but non-summable: $\sum_{k=0}^{\infty} \gamma_k^2 < \infty$, $\sum_{k=0}^{\infty}\gamma_k=\infty$.  \end{assumption}
The following lemma is used in our analysis and may be found in~\cite{bauschke2011convex}.
\begin{lemma} \label{project} \em
Let $X$ be {a} nonempty closed convex set in $\mathbb{R}^n$. Then for all $y \in X$ and for any $x \in \Real^n$, we have that the following hold: 
(i) $(\Pi_X(x)-x)^T(y-\Pi_X(x))\ge0$; and (ii)  $\|\Pi_X(x)-y\|^2\leq\|x-y\|^2-\|x-\Pi_X(x)\|^2$. \qed
\end{lemma}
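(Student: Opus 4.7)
The plan is to derive both inequalities from the variational characterization of the projection. Since $X$ is a nonempty closed convex set and the squared distance function $z\mapsto \tfrac{1}{2}\|z-x\|^2$ is strongly convex, the projection $\Pi_X(x)$ is the unique minimizer of this function over $X$. I would begin by writing down the first-order optimality condition for this constrained minimization, namely that the gradient of $\tfrac{1}{2}\|z-x\|^2$ at $z=\Pi_X(x)$, which equals $\Pi_X(x)-x$, makes a nonnegative inner product with any feasible direction $y-\Pi_X(x)$ for $y \in X$. This directly yields part (i).

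For part (ii), the plan is to apply the polarization/cosine identity to the three points $\Pi_X(x)$, $x$, and $y$. Concretely, I would start from the algebraic decomposition
\begin{align*}
\|x-y\|^2 \;=\; \|x-\Pi_X(x)\|^2 + \|\Pi_X(x)-y\|^2 + 2\,(x-\Pi_X(x))^T(\Pi_X(x)-y).
\end{align*}
Then I would invoke part (i) applied to the same $y \in X$, which gives $(\Pi_X(x)-x)^T(y-\Pi_X(x)) \ge 0$, equivalently $(x-\Pi_X(x))^T(\Pi_X(x)-y) \ge 0$. Rearranging the identity above and dropping this nonnegative cross term yields
\begin{align*}
\|\Pi_X(x)-y\|^2 \;\le\; \|x-y\|^2 - \|x-\Pi_X(x)\|^2,
\end{align*}
which is part (ii).

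There is no real obstacle here since this is a classical result in convex analysis; the only point that requires mild care is the sign bookkeeping when translating between $(\Pi_X(x)-x)^T(y-\Pi_X(x))\ge 0$ and $(x-\Pi_X(x))^T(\Pi_X(x)-y)\ge 0$, which are the same inequality up to a double sign flip. The entire argument relies only on the convexity and closedness of $X$ (to guarantee existence and uniqueness of the projection, and to justify the feasibility of the direction $y-\Pi_X(x)$), so both parts follow within a few lines once the optimality condition is in place.
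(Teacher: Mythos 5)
Your proposal is correct and is precisely the standard argument (first-order optimality of the projection for part (i), then the polarization identity plus the nonnegative cross term for part (ii)); the paper itself gives no proof and simply cites Bauschke--Combettes, where this same argument appears. Nothing further is needed.
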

The {Robbins-Siegmund} (super-martingale convergence) lemma and its variant are also employed in our analysis (see~\cite{polyak1987introduction}).
\begin{lemma}\label{robbins}\em
Let $v_k$, $u_k$, $\delta_k$, $\psi_k$ be nonnegative random variables adapted to $\sigma$-algebra $\mathcal{F}_k$, and let the following relations hold almost surely.
\begin{align}
\notag\mathbb{E}[v_{k+1}\mid\mathcal{F}_k]\leq(1+u_k)v_k-\delta_k+\psi_k, \quad\forall k; \quad \sum_{k=0}^{\infty}u_k<\infty,\mbox{ and } \sum_{k=0}^{\infty}\psi_k<\infty.
\end{align}
Then a.s., we have that 
$\lim_{k\to\infty}v_k=v$ and $\sum_{k=0}^{\infty}\delta_k<\infty,$
where $v\ge0$ is a random variable. \qed
\end{lemma}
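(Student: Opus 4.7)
The plan is to reduce the hypothesis to a clean nonnegative supermartingale and then invoke Doob's convergence theorem, handling the random ``noise'' sequences $\{u_k\}$ and $\{\psi_k\}$ by a normalization followed by a localization argument. First, I would introduce the adapted process $P_k \triangleq \prod_{j=0}^{k-1}(1+u_j)$, which is $\mathcal{F}_k$-measurable and, since $\sum_k u_k < \infty$ a.s., converges a.s. to a finite, strictly positive limit $P_\infty$. Setting $\tilde v_k \triangleq v_k/P_k$, $\tilde\delta_k \triangleq \delta_k/P_{k+1}$, and $\tilde\psi_k \triangleq \psi_k/P_{k+1}$, I would exploit the $\mathcal{F}_k$-measurability of $P_{k+1}$ and the hypothesis to derive $\mathbb{E}[\tilde v_{k+1}\mid \mathcal{F}_k] \le \tilde v_k - \tilde\delta_k + \tilde\psi_k$, eliminating the $(1+u_k)$ factor.

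Next, I would define the auxiliary process $M_k \triangleq \tilde v_k + \sum_{j=0}^{k-1}\tilde\delta_j - \sum_{j=0}^{k-1}\tilde\psi_j$, which is $\mathcal{F}_k$-adapted. A telescoping calculation using the cleaned-up inequality gives $\mathbb{E}[M_{k+1}\mid\mathcal{F}_k] \le M_k$, so $\{M_k\}$ is a supermartingale. The main technical obstacle is that $M_k$ is \emph{not} uniformly bounded below by a deterministic constant; the only pathwise lower bound is $M_k \ge -\sum_{j=0}^{\infty}\tilde\psi_j$, and this random tail need not be integrable since the assumptions on $\psi_k$ are only almost-sure summability, not summability in $L^1$.

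I would resolve this via localization. For each $L > 0$, introduce the stopping time $\tau_L \triangleq \inf\{k \ge 0 : \sum_{j=0}^{k-1}(u_j + \psi_j) > L\}$ (with $\inf\emptyset = +\infty$). The stopped process $M_{k\wedge\tau_L}$ is then a supermartingale bounded below by $-L$, so Doob's a.s. convergence theorem applies and yields $M_{k\wedge\tau_L} \to M_\infty^{(L)}$ a.s. Letting $L \to \infty$ and using $\mathbb{P}(\tau_L = \infty) \to 1$ (a consequence of $\sum_k u_k + \sum_k \psi_k < \infty$ a.s.), I would conclude that $M_k$ itself converges a.s. to some finite random variable on the full probability space.

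Finally, I would extract the two conclusions. Since $\sum_j \tilde\psi_j < \infty$ a.s., the partial sum $\sum_{j=0}^{k-1}\tilde\psi_j$ converges a.s., which together with the convergence of $M_k$ forces a.s. convergence of $\tilde v_k + \sum_{j=0}^{k-1}\tilde\delta_j$. Because both $\tilde v_k \ge 0$ and $\tilde\delta_j \ge 0$, the two summands must converge separately, giving $\sum_j \tilde\delta_j < \infty$ and $\tilde v_k \to \tilde v_\infty$ a.s. Multiplying back by the a.s. bounded, strictly positive factor $P_k \to P_\infty$ then recovers $\lim_k v_k = v$ a.s. and $\sum_k \delta_k < \infty$ a.s., which are the stated conclusions. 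The only genuinely delicate step in this outline is the localization; the rest is algebraic manipulation and a direct appeal to the martingale convergence theorem.
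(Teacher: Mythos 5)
The paper does not actually prove this lemma: it is stated as the classical Robbins--Siegmund almost-supermartingale convergence theorem and is simply cited from~\cite{polyak1987introduction}, so there is no in-paper argument to compare against. Your proposal is, in essence, the standard proof of that theorem --- normalize by $P_k=\prod_{j<k}(1+u_j)$ to remove the multiplicative factor, assemble the compensated process $M_k=\tilde v_k+\sum_{j<k}\tilde\delta_j-\sum_{j<k}\tilde\psi_j$, localize so that Doob's convergence theorem applies, and then separate the monotone sum from $\tilde v_k$ using nonnegativity --- and the logic is sound. One point deserves more care than your outline gives it: the stopping time $\tau_L$ fixes the \emph{lower} bound on $M_{k\wedge\tau_L}$, but Doob's theorem for supermartingales also requires integrability, and $\mathbb{E}[M_{k\wedge\tau_L}^+]$ is controlled by $\mathbb{E}[v_0]$, which the hypotheses do not assume to be finite. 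The standard remedy is a second, harmless truncation: work on the $\mathcal{F}_0$-measurable event $\{v_0\le N\}$ (equivalently, incorporate $v_0>N$ into the stopping rule), apply the convergence theorem there, and let $N\to\infty$; since $v_0$ is finite a.s., these events exhaust $\Omega$ up to a null set. Your final separation step is fine as written: $\sum_{j<k}\tilde\delta_j$ is nondecreasing and bounded above by $\sup_k\bigl(\tilde v_k+\sum_{j<k}\tilde\delta_j\bigr)<\infty$ because $\tilde v_k\ge 0$, hence it converges, and $\tilde v_k$ converges as the difference; multiplying back by $P_k\to P_\infty\in(0,\infty)$ recovers the claim for $v_k$ and $\delta_k$.
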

\begin{lemma}\label{robbins2}\em
Let $v_k$ be nonnegative random variables adapted to $\sigma$-algebra $\mathcal{F}_k$ where $\mathbb{E}[v_0] < \infty$. Suppose
\begin{align*}
\mathbb{E}[v_{k+1} \mid \Fscr_k] \leq (1-\alpha_k) v_k + \beta_k, \mbox{ a.s. } \mbox{ for all } k \geq 0.
\end{align*}
In addition, suppose $0 \leq \alpha_k \leq 1$ and $\beta_k \geq 0$ for $k \geq 0$, $\sum_{k=0}^{\infty} \alpha_k = \infty$, $\sum_{k=0}^{\infty} \beta_k < \infty$, and $\tfrac{\beta_k}{\alpha_k} \to 0$ as $k \to \infty$. Then $v_k \to 0$ as $k \to \infty$ in an a.s. sense. Furthermore, $\mathbb{E}[v_k] \to 0$ as $k \to \infty$. \qed 
\end{lemma}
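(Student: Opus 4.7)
\textbf{Proof plan for Lemma \ref{robbins2}.}

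The plan is to split the argument into an almost-sure part and an expectation part, using the standard Robbins--Siegmund lemma (Lemma \ref{robbins}) for the former and a deterministic Lyapunov-style recursion for the latter.

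For the almost sure claim, I would apply Lemma \ref{robbins} directly to the given recursion by identifying $u_k = 0$, $\delta_k = \alpha_k v_k$, and $\psi_k = \beta_k$. Since $\sum_k \psi_k = \sum_k \beta_k < \infty$ and $u_k = 0$ is trivially summable, Lemma \ref{robbins} yields two conclusions in an a.s. sense: $v_k$ converges to some nonnegative random variable $v$, and $\sum_{k=0}^{\infty} \alpha_k v_k < \infty$. The key step is then to show $v = 0$ a.s. I would argue by contradiction on the event $\{v > 0\}$: on this event, there exists a (random) index $K_0$ such that $v_k \geq v/2 > 0$ for all $k \geq K_0$, which together with $\sum_{k=0}^{\infty} \alpha_k = \infty$ forces $\sum_{k=K_0}^{\infty} \alpha_k v_k \geq (v/2) \sum_{k=K_0}^{\infty}\alpha_k = \infty$, contradicting the a.s. finiteness of $\sum_k \alpha_k v_k$. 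Hence $\mathbb{P}(v > 0) = 0$ and $v_k \to 0$ a.s. Note this step does not require $\beta_k/\alpha_k \to 0$.

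For the expectation claim, I would take unconditional expectations of the recursion, letting $m_k \triangleq \mathbb{E}[v_k]$. The tower property gives
\begin{align*}
m_{k+1} \leq (1-\alpha_k) m_k + \beta_k, \quad k \geq 0,
\end{align*}
with $m_0 < \infty$. Iterating the trivial bound $m_{k+1} \leq m_k + \beta_k$ shows $m_k \leq m_0 + \sum_{j=0}^{\infty} \beta_j < \infty$, so $\{m_k\}$ is uniformly bounded. To show $m_k \to 0$, I would use an $\varepsilon$-argument: fix $\varepsilon > 0$ and choose $N$ large enough that $\beta_k/\alpha_k \leq \varepsilon$ for all $k \geq N$. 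Then for $k \geq N$,
\begin{align*}
m_{k+1} - \varepsilon \leq (1-\alpha_k)(m_k - \varepsilon).
\end{align*}
In particular, if $m_k \leq \varepsilon$ then $m_{k+1} \leq \varepsilon$, so once $\{m_k\}$ dips below $\varepsilon$ it stays there. If instead $m_k > \varepsilon$ for all $k \geq N$, iterating gives $m_k - \varepsilon \leq (m_N - \varepsilon)\prod_{j=N}^{k-1}(1-\alpha_j)$, and the product tends to zero since $\sum_j \alpha_j = \infty$, a contradiction. Hence $\limsup_k m_k \leq \varepsilon$, and letting $\varepsilon \downarrow 0$ gives $m_k \to 0$.

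The main obstacle I anticipate is the expectation part: the a.s. convergence $v_k \to 0$ does not automatically pass to $L^1$-convergence without a uniform integrability argument, so rather than trying to upgrade the a.s. statement I would work directly with the deterministic recursion on $m_k$, where the hypothesis $\beta_k/\alpha_k \to 0$ is essential (and is not needed for the a.s. part). The $\varepsilon$-argument above handles this cleanly and isolates precisely where that hypothesis is used.
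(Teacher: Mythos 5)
Your proposal is correct, but note that the paper does not actually supply a proof of Lemma \ref{robbins2}: it is stated with a terminal \qed and attributed to~\cite{polyak1987introduction}, so there is no in-paper argument to compare against. Your two-part strategy is a sound, self-contained derivation. The almost-sure part is exactly the standard reduction to Lemma \ref{robbins} with $u_k=0$, $\delta_k=\alpha_k v_k$, $\psi_k=\beta_k$, followed by the contradiction on $\{v>0\}$ using $\sum_k\alpha_k=\infty$; and you are right that $\beta_k/\alpha_k\to 0$ plays no role there. The expectation part is also handled correctly: since a.s. convergence does not yield $L^1$ convergence without uniform integrability, passing to the deterministic recursion $m_{k+1}\le(1-\alpha_k)m_k+\beta_k$ and running the $\varepsilon$-argument (where $\beta_k/\alpha_k\to 0$ is genuinely needed) is the right move, and the bound $m_{k+1}-\varepsilon\le(1-\alpha_k)(m_k-\varepsilon)$ for $k\ge N$ is verified correctly. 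One small wording issue: in the branch where $m_k>\varepsilon$ for all $k\ge N$, the conclusion $m_k-\varepsilon\le(m_N-\varepsilon)\prod_{j=N}^{k-1}(1-\alpha_j)\to 0$ is not literally a contradiction (the sequence could approach $\varepsilon$ from above); but both branches still give $\limsup_k m_k\le\varepsilon$, which is all you use, so the argument stands.
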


We need the following Lemma to prove the a.s. convergence of (\textbf{v-SPRG}).
\begin{lemma} \label{mn}\em
 {Suppose the mapping $F$ is monotone on $\Real^n$ and the solution set of VI$(X,F)$ is given by $X^*$. Then for any $x^*, z^* \in X^*$, we have that 
\begin{align*}
	F(x^*)^T(z^* - x^*) = F(z^*)^T(x^*-z^*) = 0. 
\end{align*}}
\end{lemma}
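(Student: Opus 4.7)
The plan is to exploit the defining variational inequality together with monotonicity in a completely elementary way. Since both $x^*$ and $z^*$ lie in $X^*\subseteq X$, each is an admissible test point in the other's VI. First I would set $y=z^*$ in the inequality for $x^*$ and set $y=x^*$ in the inequality for $z^*$, obtaining
\begin{align*}
F(x^*)^T(z^*-x^*) \ge 0, \qquad F(z^*)^T(x^*-z^*) \ge 0.
\end{align*}

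Next I would add these two inequalities. The sum rearranges to
\begin{align*}
-(F(z^*)-F(x^*))^T(z^*-x^*) \ge 0,
\end{align*}
i.e.\ $(F(z^*)-F(x^*))^T(z^*-x^*)\le 0$. But monotonicity of $F$ (Assumption~\ref{lip-mon}) gives the reverse inequality $(F(z^*)-F(x^*))^T(z^*-x^*)\ge 0$, so this inner product is exactly zero.

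Consequently, $F(x^*)^T(z^*-x^*)+F(z^*)^T(x^*-z^*)=0$, and since each summand is individually nonnegative by the VI, each must vanish. This yields $F(x^*)^T(z^*-x^*)=F(z^*)^T(x^*-z^*)=0$, as claimed. There is no real obstacle here; the only observation worth highlighting is that the argument uses nothing beyond the VI characterization of $X^*$ and monotonicity of $F$, so no compactness, Lipschitz continuity, or weak-sharpness hypothesis is needed.
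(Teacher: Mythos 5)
Your proof is correct and follows essentially the same route as the paper's: test each solution in the other's variational inequality, add the resulting inequalities, and play the sign against monotonicity to force the cross term $(F(z^*)-F(x^*))^T(z^*-x^*)$ to vanish. Your closing step (a sum of two nonnegative terms equals zero, hence each vanishes) is a slightly cleaner way to finish than the paper's equality-chasing, but it is the same argument in substance.
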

\begin{proof}
Suppose {a} limit point of {a} subsequence of $\{x_k\}$ is {given by} $z^*\in
X^*$. By the definition of $X^*$, we have {for any $x^* \in X^*$ that}
\begin{align}
F(x^*)^T(z^*-x^*) &\ge 0, \label{eq_mon1} \\
F(z^*)^T(x^*-z^*) &\ge 0. \label{eq_mon2}
\end{align}
Combining these two inequalities, we obtain
\begin{align*}
(F(x^*)-F(z^*))^T(x^*-z^*) &\le 0.
\end{align*}
Since the mapping $F$ is monotone, we also have
\begin{align*}
(F(x^*)-F(z^*))^T(x^*-z^*) &\ge 0.
\end{align*}
{It follows that $F(z^*)^T(x^*-z^*)=F(x^*)^T(x^*-z^*)$, {which by invoking
\eqref{eq_mon2} implies that} $F(x^*)^T(x^*-z^*) \geq 0$. However, by recalling
\eqref{eq_mon1}, we have that $F(x^*)^T(z^*-x^*) = 0$. Consequently},
$F(z^*)^T(x^*-z^*)=F(x^*)^T(z^*-x^*)=0$. Thus, the conclusion follows.
\end{proof}

%--------------------------------------
\section{Convergence analysis for (\textbf{v-SPRG}) and (v-SSE)}
%--------------------------------------
{In this section, we analyze the convergence properties of {\bf (v-SPRG)} and {\bf (v-SSE)} in Sections~\ref{sec:3.1} and~\ref{sec:3.2}, respectively.}
\subsection{Stochastic Projected Reflected Gradient Schemes} \label{sec:3.1}
In this subsection, we prove the a.s. convergence of the iterates produced by (\textbf{v-SPRG}) when $F$ is a Lipschitz continuous and monotone map on $\mathbb{R}^n$ under a weak-sharpness requirement. We then relax the weak-sharpness assumption in deriving a rate statement in terms of the gap function for the averaged sequence.  We begin with a lemma that relates the error in consecutive iterates.
\begin{lemma} \label{egml1} \em
{Consider a sequence generated by {\bf (v-SPRG)}. Suppose Assumption
\ref{lip-mon} holds and 
$0<\gamma_k=\gamma\leq\tfrac{1}{8 \tilde L}$ for all $k$ where ${\tilde
L}^2 \triangleq (L^2 + \tfrac{10 \nu_1^2}{N_0}).$ Then for any $x_0 \in X$ and any $x^* \in X^*$,
the following holds for all $k \ge 0$.} 
\begin{align}
\notag&\|x_{k+1}-x^* \|^2+\tfrac{3}{4}\|x_{k+1}-y_k\|^2+2\gamma F(x^*)^T(x_k-x^*) \\
\notag&\le\|x_k-x^*
\|^2+\tfrac{3}{4}\|x_k-y_{k-1}\|^2+ 2\gamma F(x^*)^T(x_{k-1}-x^*) \\
\notag & +8\gamma^2\|w_k-w_{k-1}\|^2
-\left(1-16\gamma^2L^2\right)\|x_k-y_k\|^2-{2\gamma F(x^*)^T(x_k-x^*)} -2\gamma \vvs{\bar{w}_k}^T(y_k-x^*).
\end{align}
\end{lemma}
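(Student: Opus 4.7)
The plan is to start from Lemma~\ref{project}(ii) applied to $x_{k+1}=\Pi_X(x_k-\gamma(F(y_k)+\bar w_k))$, with $y_k\triangleq 2x_k-x_{k-1}$, which after expanding squared norms yields the baseline
\begin{align*}
\|x_{k+1}-x^*\|^2 \le \|x_k-x^*\|^2 - \|x_{k+1}-x_k\|^2 - 2\gamma(F(y_k)+\bar w_k)^T(x_{k+1}-x^*).
\end{align*}
I would split the inner product via $x_{k+1}-x^* = (y_k-x^*) - (y_k-x_{k+1})$, then invoke monotonicity of $F$ in the form $F(y_k)^T(y_k-x^*)\ge F(x^*)^T(y_k-x^*)$ together with $y_k-x^* = 2(x_k-x^*)-(x_{k-1}-x^*)$ to generate the telescoping pattern $-4\gamma F(x^*)^T(x_k-x^*) + 2\gamma F(x^*)^T(x_{k-1}-x^*)$, which accounts for all $F(x^*)^T(\cdot)$ terms in the statement. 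Splitting $-2\gamma\bar w_k^T(x_{k+1}-x^*) = -2\gamma\bar w_k^T(y_k-x^*) + 2\gamma\bar w_k^T(y_k-x_{k+1})$ peels off the $-2\gamma\bar w_k^T(y_k-x^*)$ term appearing on the right-hand side.

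The main work is bounding the leftover cross term $2\gamma(F(y_k)+\bar w_k)^T(y_k-x_{k+1})$. I would decompose it by adding and subtracting $F(y_{k-1})+\bar w_{k-1}$, and treat the three resulting pieces separately. For the piece $2\gamma(F(y_{k-1})+\bar w_{k-1})^T(y_k-x_{k+1})$, I would apply Lemma~\ref{project}(i) to the $(k-1)$-st projection $x_k=\Pi_X(x_{k-1}-\gamma(F(y_{k-1})+\bar w_{k-1}))$ with \emph{two} feasible test points: $y=x_{k-1}$, which yields $\gamma(F(y_{k-1})+\bar w_{k-1})^T(x_k-x_{k-1}) \le -\|x_k-x_{k-1}\|^2 = -\|x_k-y_k\|^2$, and $y=x_{k+1}$, which yields $\gamma(F(y_{k-1})+\bar w_{k-1})^T(x_k-x_{k+1}) \le (y_k-x_k)^T(x_{k+1}-x_k)$. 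Using $(x_k-x_{k-1})+(x_k-x_{k+1}) = y_k-x_{k+1}$ together with the polarization $2(y_k-x_k)^T(x_{k+1}-x_k) = \|y_k-x_k\|^2+\|x_{k+1}-x_k\|^2-\|y_k-x_{k+1}\|^2$ delivers
\begin{align*}
2\gamma(F(y_{k-1})+\bar w_{k-1})^T(y_k-x_{k+1}) \le -\|x_k-y_k\|^2 + \|x_{k+1}-x_k\|^2 - \|y_k-x_{k+1}\|^2.
\end{align*}
For the Lipschitz residual, Cauchy--Schwarz plus Young's inequality $2ab \le a^2/c + cb^2$ with $c=1/8$ gives $2\gamma(F(y_k)-F(y_{k-1}))^T(y_k-x_{k+1}) \le 8\gamma^2 L^2\|y_k-y_{k-1}\|^2 + \tfrac{1}{8}\|y_k-x_{k+1}\|^2$; the noise-difference residual gives $8\gamma^2\|\bar w_k-\bar w_{k-1}\|^2 + \tfrac{1}{8}\|y_k-x_{k+1}\|^2$ analogously. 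The two $\tfrac{1}{8}$ pieces combine with the $-\|y_k-x_{k+1}\|^2$ above to produce exactly $-\tfrac{3}{4}\|y_k-x_{k+1}\|^2$.

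To finish, I would use $y_k-y_{k-1} = (y_k-x_k)+(x_k-y_{k-1})$ and $\|a+b\|^2\le 2\|a\|^2+2\|b\|^2$ so that $8\gamma^2 L^2\|y_k-y_{k-1}\|^2 \le 16\gamma^2 L^2\|x_k-y_k\|^2 + 16\gamma^2 L^2\|x_k-y_{k-1}\|^2$. The first summand combines with the earlier $-\|x_k-y_k\|^2$ to yield exactly $-(1-16\gamma^2 L^2)\|x_k-y_k\|^2$; the second summand is dominated by $\tfrac{3}{4}\|x_k-y_{k-1}\|^2$ under the stepsize restriction $\gamma\le 1/(8\tilde L)$, since $16\gamma^2 L^2 \le 16\gamma^2\tilde L^2 \le 1/4 \le 3/4$. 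The $\|x_{k+1}-x_k\|^2$ from the polarization cancels the $-\|x_{k+1}-x_k\|^2$ from the baseline, and rearrangement produces the claim. The hard part is the bookkeeping: selecting the Young's parameter $1/8$ so that the $8\gamma^2$, $\tfrac{3}{4}$, and $1-16\gamma^2 L^2$ constants align simultaneously, and deploying the past projection inequality with \emph{two} distinct test points to eliminate both the $(y_k-x_k)$- and $(x_k-x_{k+1})$-components of the cross term in a single stroke; once this structure is in place, what remains is routine algebra.
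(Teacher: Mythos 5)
Your proposal is correct and follows essentially the same route as the paper's proof: the same baseline from Lemma~\ref{project}(ii), the same use of monotonicity at $(y_k,x^*)$, the same two-test-point application of Lemma~\ref{project}(i) to the previous projection (the paper adds the two inequalities and then polarizes $2(y_k-x_k)^T(x_{k+1}-y_k)$, which is algebraically identical to your split), the same Young's parameter $\tfrac{1}{8}$ on both residual cross terms, and the same telescoping decomposition $y_k-x^*=2(x_k-x^*)-(x_{k-1}-x^*)$ for the $F(x^*)$ terms. All constants ($8\gamma^2$, $\tfrac34$, $1-16\gamma^2L^2$) align exactly with the paper's derivation.
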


\begin{proof}
Define $y_k \triangleq 2x_k-x_{k-1}$ for all $k\ge1$ and $\bar{F}(y_k) \triangleq \frac{\sum_{j=1}^{N_k} F(y_k, \omega_{k,j})}{N_k}$. \ssc{We reuse the notation of $\bar{w}_k$ and define $\bar{w_k}=\bar{F}(y_k)-F(y_k)$ in this proof.} By Lemma \ref{project}(ii)
	and noting that $x_{k+1}=\Pi_X(x_k-\gamma_k\bar{F}(y_k))$ and
		$\bar{F}(y_k) = F(y_k) + \bar{w}_k$, 
	the following holds for $x_{k+1}$ and any solution $x^*$.
\begin{align}
\notag \|x_{k+1}-x^* \|^2 &\le \|x_k-\gamma_k\bar{F}(y_k)-x^*\|^2-\|x_k-\gamma_k\bar{F}(y_k)-x_{k+1}\|^2 \\ 
&= \| x_k-x^* \|^2-\| x_{k+1}-x_k \|^2-2\gamma_k(F(y_k)+\bar{w}_k)^T(x_{k+1}-x^*).\label{ineq1}
\end{align}
Since $F$ is monotone over $\Real^n$, by adding $2\gamma_k(F(y_k)-F(x^*))^T(y_k-x^*)$ to the {right hand side (rhs)} 
of \eqref{ineq1}, we obtain:
\begin{align}
\notag & \|x_{k+1}-x^* \|^2 \le \|x_k-x^*\|^2-\|x_{k+1}-x_k\|^2+2\gamma_k(F(y_k)-F(x^*))^T(y_k-x^*) \\ \notag&-2\gamma_k(F(y_k)+\bar{w}_k)^T(x_{k+1}-x^*) \\
\notag &= \|x_k-x^* \|^2-\| x_{k+1}-x_k \|^2+2\gamma_kF(y_k)^T(y_k-x_{k+1})+2\gamma_kF(y_k)^T(x_{k+1}-x^*) \\
\notag &-2\gamma_kF(x^*)^T(y_k-x^*)-2\gamma_kF(y_k)^T(x_{k+1}-x^*)+2\gamma_k\bar{w}_k^T(y_k-x_{k+1})-2\gamma_k\bar{w}_k^T(y_k-x^*) \\
\notag&=\|x_k-x^* \|^2-\| x_{k+1}-x_k \|^2+2\gamma_k(F(y_k)+\bar{w}_k)^T(y_k-x_{k+1})-2\gamma_k(F(x^*)+\bar{w}_k)^T(y_k-x^*) \\
\notag&=\|x_k-x^* \|^2-\| x_{k+1}-x_k \|^2+\underbrace{2\gamma_k(F(y_k)-F(y_{k-1}))^T(y_k-x_{k+1})}_{\vs{\tiny \mbox{Term 1}}} \\ 
&+\underbrace{2\gamma_k(F(y_{k-1})+\bar{w}_k)^T(y_k-x_{k+1})}_{\vs{\tiny \mbox{Term 2}}}-2\gamma_k(F(x^*)+\bar{w}_k)^T(y_k-x^*).\label{eq2}
\end{align}
Since $x_{k+1},x_{k-1}\in X$, by Lemma \ref{project}(i), we may conclude that
\begin{align}
\notag (x_k-x_{k-1}+\gamma_{k-1}(F(y_{k-1})+\bar{w}_{k-1}))^T(x_k-x_{k+1}) &\le 0 \mbox{ and }\\
\notag (x_k-x_{k-1}+\gamma_{k-1}(F(y_{k-1})+\bar{w}_{k-1}))^T(x_k-x_{k-1}) &\le 0.
\end{align}
Adding these two inequalities yields the following:
$$(x_k-x_{k-1}+\gamma_{k-1}(F(y_{k-1})+\bar{w}_{k-1}))^T(y_k-x_{k+1})\leq 0,$$
since $y_k = 2x_k - x_{k-1}$, leading to the following
inequality:
\begin{align}
\notag 2\gamma_{k-1}(F(y_{k-1})+\bar{w}_{k-1})^T(y_k-x_{k+1}) & \le2(x_k-x_{k-1})^T(x_{k+1}-y_k) \\
=2(y_k-x_k)^T(x_{k+1}-y_k) & =\|x_{k+1}-x_k \|^2-\| x_k-y_k \|^2-\| x_{k+1}-y_k \|^2, \label{eq3}
\end{align}
where the first equality follows from recalling that
$y_k=2x_k-x_{k-1}$.
Now, we may bound $2\gamma_k(F(y_{k-1})+\bar{w}_k)^T(y_k-x_{k+1})$ 
as follows:
\begin{align}
\notag& \vs{\mbox{Term 2}} = 2\gamma_k(F(y_{k-1})+\bar{w}_k)^T(y_k-x_{k+1})= 2\gamma_k(F(y_{k-1})+\bar{w}_k)^T(y_k-x_{k+1})\\\notag&-2\gamma_k(F(y_{k-1})+\bar{w}_{k-1})^T(y_k-x_{k+1})+2\gamma_k(F(y_{k-1})+\bar{w}_{k-1})^T(y_k-x_{k+1}) \\\notag
&=2\gamma_k(\bar{w}_k-\bar{w}_{k-1})^T(y_k-x_{k+1})+2\left(\tfrac{\gamma_k}{\gamma_{k-1}}\right)\gamma_{k-1}(F(y_{k-1})+\bar{w}_{k-1})^T(y_k-x_{k+1}) \\
\notag&\leq
8\gamma_k^2\|\bar{w}_k-\bar{w}_{k-1}\|^2+\tfrac{1}{8}\|x_{k+1}-y_k\|^2-\tfrac{\gamma_k}{\gamma_{k-1}}\|x_{k+1}-y_k\|^2+\tfrac{\gamma_k}{\gamma_{k-1}}\|x_{k+1}-x_k\|^2-\tfrac{\gamma_k}{\gamma_{k-1}}\|x_k-y_k\|^2\notag \\
&=8\gamma_k^2\|\bar{w}_k-\bar{w}_{k-1}\|^2+\left(\tfrac{1}{8}-\tfrac{\gamma_k}{\gamma_{k-1}}\right)\|x_{k+1}-y_k\|^2+\tfrac{\gamma_k}{\gamma_{k-1}}\|x_{k+1}-x_k\|^2-\tfrac{\gamma_k}{\gamma_{k-1}}\|x_k-y_k\|^2,\label{eq4}
\end{align}
where  
$2\gamma_k(w_k-w_{k-1})^T(y_k-x_{k+1})\le8\gamma_k^2\|w_k-w_{k-1}\|^2+\tfrac{1}{8}\|x_{k+1}-y_k\|^2$ and 
	inequality \eqref{eq3} allows for bounding $2\gamma_{k-1}(F(y_{k-1})+{\bar{w}_{k-1}})^T(y_k-x_{k+1}).$ Next we estimate
$(F(y_k)-F(y_{k-1})^T(y_k-x_{k+1})$. By  the Cauchy-Schwarz inequality
		and the Lipschitz continuity of the map (Ass.~\ref{lip-mon}),
		it follows that
\begin{align}
\notag& {\mbox{Term 1}} = 2\gamma_k(F(y_k)-F(y_{k-1}))^T(y_k-x_{k+1}) \le2\gamma_k\|F(y_k)-F(y_{k-1})\|\|y_k-x_{k+1}\|\\&\le2\gamma_kL\|y_k-y_{k-1}\|\|y_k-x_{k+1}\|\le8\gamma_k^2L^2\|y_k-y_{k-1}\|^2+\tfrac{1}{8}\|x_{k+1}-y_k\|^2
\label{eq5} \\ &\le16\gamma_k^2L^2\|x_k-y_{k-1}\|^2+16\gamma_k^2L^2\|x_k-y_k\|^2+\tfrac{1}{8}\|x_{k+1}-y_k\|^2,\label{eq6}
\end{align}
where \eqref{eq6} follows from $\|u+v\|^2\le2\|u\|^2+2\|v\|^2$.
Using \eqref{eq4} and \eqref{eq6}, we deduce from \eqref{eq2} that
\begin{align}
\notag \|x_{k+1}-x^*\|^2 
 & \le\| x_k-x^* \|^2-\left(1-\tfrac{\gamma_k}{\gamma_{k-1}}\right) \|x_{k+1}-x_k\|^2-\left(\tfrac{\gamma_k}{\gamma_{k-1}}-16\gamma_k^2L^2\right)\|x_k-y_k\|^2\\\notag& -\left(\tfrac{\gamma_k}{\gamma_{k-1}}-\tfrac{1}{4}\right)\|x_{k+1}-y_k\|^2+16\gamma_k^2L^2\|x_k-y_{k-1}\|^2+8\gamma_k^2\|\bar{w}_k-\bar{w}_{k-1}\|^2 \\
 &-2\gamma_k(F(x^*)+\bar{w}_k)^T(y_k-x^*).\label{eq7}
\end{align}
%\vvs{We observe the following relationship.
%\begin{align*}
%r^2_{\gamma}(x_k) &  = \|x_k - \Pi_X(x_k-\gamma x_k) \|^2 = \|x_k - x_{k+1}+x_{k+1} -\Pi_X(x_k-\gamma x_k) \|^2 \\
%				  & \leq 2\|x_k-x_{k+1}\|^2 + 2 \|\gamma (F(y_k)+w_k - F(x_k))\|^2 \\
%				  &  \leq 2\|x_k-x_{k+1}\|^2 + 4 \|\gamma (F(y_k) - F(x_k))\|^2 + 4 \|\gamma w_k\|^2 \\  
%				  &  \leq 2\|x_k-x_{k+1}\|^2 + 4\gamma^2 L^2 \|y_k - x_k\|^2 + 4 \gamma^2 \|w_k\|^2 \\
%\implies -\|x_k - x_{k+1}\|^2 & \leq - \tfrac{1}{2}r^2_{\gamma}(x_k) + 2\gamma^2 L^2 \|y_k - x_k\|^2 + 2 \gamma^2 \|w_k\|^2. 
% \end{align*}} 
By assumption, {$0 \leq \tfrac{1}{8 \tilde L}$ for all $k$},
\begin{align}
\label{eq-bd}
16\gamma_k^2L^2\le {16 \gamma_k^2 \tilde {L}^2} \leq \tfrac{1}{4}\le\left(\tfrac{\gamma_{k-1}}{\gamma_{k-2}}-\tfrac{1}{4}\right).\end{align}
Consequently, from \eqref{eq7} and by invoking~\eqref{eq-bd}, we may conclude the following:
\begin{align}
\notag&\|x_{k+1}-x^* \|^2+\left(\tfrac{\gamma_k}{\gamma_{k-1}}-\tfrac{1}{4}\right)\|x_{k+1}-y_k\|^2\le\|x_k-x^*\|^2+\left(\tfrac{\gamma_{k-1}}{\gamma_{k-2}}-\tfrac{1}{4}\right)\|x_k-y_{k-1}\|^2 \\
&+8\gamma_k^2\|w_k-w_{k-1}\|^2-\left(\tfrac{\gamma_k}{\gamma_{k-1}}-16\gamma_k^2L^2\right)\|x_k-y_k\|^2-2\gamma_kF(x^*)^T(y_k-x^*)+\gamma_k\bar{w}_k^T(y_k-x^*).\label{eq9}
\end{align}
We may bound $2\gamma_kF(x^*)^T(y_k-x^*)$ as follows {when $\gamma_{k-1} \geq \gamma_k$}:
\begin{align} \notag
 & \quad -2\gamma_k F(x^*)^T(y_k - x^*)  = -2\gamma_k F(x^*)^T(x_k - x^*) - 2\gamma_k F(x^*)^T(x_k - x^*) + 2\gamma_k F(x^*)^T( x_{k-1}-x^*) \\
		& \leq -2\gamma_k F(x^*)^T(x_k - x^*) - 2\gamma_k F(x^*)^T(x_k - x^*) + 2\gamma_{k-1} F(x^*)^T( x_{k-1}-x^*).\label{bd-ws}
\end{align}
{By substituting $\gamma_k = \gamma$ for every $k$ in \eqref{bd-ws}, we obtain the required result.}
\begin{align}
\notag& \quad \|x_{k+1}-x^* \|^2+\tfrac{3}{4}\|x_{k+1}-y_k\|^2+2\gamma F(x^*)^T(x_k-x^*) \\
\notag&\le\|x_k-x^*
\|^2+\tfrac{3}{4}\|x_k-y_{k-1}\|^2+ 2\gamma F(x^*)^T(x_{k-1}-x^*) \\
& +8\gamma^2\|\bar{w}_k-\bar{w}_{k-1}\|^2
-\left(1-16\gamma^2L^2\right)\|x_k-y_k\|^2-{2\gamma F(x^*)^T(x_k-x^*)}  -2\gamma \bar{w}_k^T(y_k-x^*).\label{eq10}
\end{align}
\end{proof}

{By leveraging} this lemma, we prove a.s. convergence of {the sequence produced by} {\bf (v-SPRG)}.

\begin{theorem}[{\bf a.s. convergence of (v-SPRG)}] \em \label{egml}
{Consider a sequence generated by {\bf (v-SPRG)}. Let Assumptions
\ref{lip-mon} -- \ref{moment} hold. Suppose
$0<\gamma\leq\tfrac{1}{8 \tilde L}$ where ${\tilde
L}^2 \triangleq (L^2 + \tfrac{10 \nu_1^2}{N_0})$ and \vvs{$\{N_k\}$ is a
non-decreasing sequence} satisfying $\sum_{k=1}^\infty \tfrac{1}{N_k} < M$. Then for any $x_0 \in X$, the sequence generated by ({\bf v-SPRG}) converges to a point in $X^*$ in an a.s. sense. } 
\end{theorem}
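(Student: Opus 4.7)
The plan is to recast the one-step recursion of Lemma~\ref{egml1} as a Robbins--Siegmund supermartingale recursion (Lemma~\ref{robbins}), extract the usual summability consequences, and then promote subsequential convergence to convergence of the entire sequence using the weak-sharpness Assumption~\ref{weak-sharp} together with Lemma~\ref{mn}. Concretely, I would introduce, for each fixed $x^*\in X^*$, the Lyapunov quantity
\[
V_k(x^*) \;\triangleq\; \|x_k-x^*\|^2 + \tfrac{3}{4}\|x_k-y_{k-1}\|^2 + 2\gamma F(x^*)^T(x_{k-1}-x^*),
\]
which is nonnegative because $x_{k-1}\in X$ and $x^*\in X^*$. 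In these terms, Lemma~\ref{egml1} reads
\[
V_{k+1}(x^*)\;\le\; V_k(x^*) + 8\gamma^2\|\bar w_k-\bar w_{k-1}\|^2 - (1-16\gamma^2 L^2)\|x_k-y_k\|^2 - 2\gamma F(x^*)^T(x_k-x^*) - 2\gamma \bar w_k^T(y_k-x^*).
\]

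The next step is to take conditional expectation with respect to $\mathcal{F}_k$. By Assumption~\ref{moment} the martingale cross term $-2\gamma\,\bar w_k^T(y_k-x^*)$ vanishes, since $y_k$ and $x^*$ are $\mathcal{F}_k$-measurable. To deal with the $\mathcal{F}_k$-measurable residual $\|\bar w_{k-1}\|^2$ that appears after expanding $\|\bar w_k-\bar w_{k-1}\|^2\le 2\|\bar w_k\|^2+2\|\bar w_{k-1}\|^2$, I would augment the Lyapunov by a matching term $C\gamma^2\|\bar w_{k-1}\|^2$ so that it folds into the following step. The state-dependent variance bound in Assumption~\ref{moment}, combined with $\|y_k\|^2\le 16\|x_k-x^*\|^2+4\|x_{k-1}-x^*\|^2+2\|x^*\|^2$, then bounds $\mathbb{E}[\|\bar w_k\|^2\mid\mathcal{F}_k]$ by a term of the form $u_k V_k(x^*) + \psi_k$ (up to a further $\mathcal{F}_k$-measurable contribution involving $V_{k-1}$, which can be carried into the supermartingale without disturbing its structure), with $u_k,\psi_k=\mathcal{O}(1/N_k)$. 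The hypothesis $\sum_k 1/N_k<\infty$ makes both summable, and the steplength bound $\gamma\le 1/(8\tilde L)$ guarantees $1-16\gamma^2 L^2>0$ since $\tilde L^2\ge L^2$. The resulting inequality is then in Robbins--Siegmund form.

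Invoking Lemma~\ref{robbins} yields three a.s.\ conclusions: $V_k(x^*)$ converges to a finite random variable, $\sum_k\|x_k-y_k\|^2<\infty$, and $\sum_k F(x^*)^T(x_k-x^*)<\infty$. In particular $\{x_k\}$ is a.s.\ bounded and $\|x_k-x_{k-1}\|=\|x_k-y_k\|\to 0$ a.s. Assumption~\ref{weak-sharp} upgrades the last summability to $\sum_k\mathrm{dist}(x_k,X^*)<\infty$, and so $\mathrm{dist}(x_k,X^*)\to 0$ a.s. Extracting a subsequence $x_{k_n}\to z^*\in X^*$ and specializing the recursion above to $x^*=z^*$, every term of $V_{k_n}(z^*)$ vanishes along the subsequence: $\|x_{k_n}-z^*\|\to 0$ by construction, $\|x_{k_n}-y_{k_n-1}\|\to 0$ since consecutive differences vanish, and $F(z^*)^T(x_{k_n-1}-z^*)\to 0$ using $x_{k_n-1}\to z^*$ and continuity of $F$ (here Lemma~\ref{mn} is the tool that keeps these cross terms compatible when the reference is itself a candidate limit in $X^*$). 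Hence $V_{k_n}(z^*)\to 0$, and since $V_k(z^*)$ has an a.s.\ limit by Robbins--Siegmund, that limit must be $0$, forcing $x_k\to z^*$ a.s.

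The main obstacles in this plan are the bookkeeping required to absorb the past-iterate noise term $\|\bar w_{k-1}\|^2$ into the Lyapunov and the careful treatment of the state-dependent variance; the batch-size hypothesis $\sum 1/N_k<\infty$ is exactly what makes all residual perturbations Robbins--Siegmund-summable, while the weak-sharpness assumption is the bridge from summability of $F(x^*)^T(x_k-x^*)$ to summability of $\mathrm{dist}(x_k,X^*)$ that is needed to localize the limit inside $X^*$.
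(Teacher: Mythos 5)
Your proposal is correct and follows essentially the same route as the paper: pass the recursion of Lemma~\ref{egml1} through conditional expectation, absorb the state-dependent noise into Robbins--Siegmund-summable perturbations using $\sum_k 1/N_k<\infty$, conclude $\sum_k\|x_k-y_k\|^2<\infty$ and (via weak sharpness) $\mathrm{dist}(x_k,X^*)\to 0$ a.s., and then upgrade subsequential convergence to convergence of the whole sequence through the a.s.\ limit of the Lyapunov quantity. The only cosmetic differences are that you carry the $\mathcal{F}_k$-measurable residual $\|\bar w_{k-1}\|^2$ explicitly inside the Lyapunov (a slightly more careful bookkeeping than the paper's) and you specialize the Lyapunov to the random limit $z^*$ directly, whereas the paper keeps $x^*$ generic and invokes Lemma~\ref{mn} to kill the cross term $F(x^*)^T(x_k-x^*)$ along the convergent subsequence.
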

 
\begin{proof}
Using \eqref{eq10}, taking expectations conditioned on $\mathcal{F}_k$, {and invoking Assumption~\ref{weak-sharp} and ~\ref{moment}, we obtain the following.} 
\begin{align}
\notag& \quad
\mathbb{E}\bigg[\|x_{k+1}-x^*\|^2+\tfrac{3}{4}\|x_{k+1}-y_k\|^2+2\gamma F(x^*)^T(x_k-x^*) 
\mid\mathcal{F}_k\bigg] \notag \\
\notag&\overset{\tiny (\mbox{Ass.}~\ref{weak-sharp})}{\le}\|x_k-x^* \|^2+\tfrac{3}{4}\|x_k-y_{k-1}\|^2+ 2\gamma F(x^*)^T(x_{k-1}-x^*) 
- 2\alpha \gamma \mbox{dist}\left(x_k,X^*\right) \notag \\
&\notag + 8\gamma^2\mathbb{E}[\|\bar{w}_k-\bar{w}_{k-1}\|^2\mid\mathcal{F}_k]-\left(1-16\gamma^2L^2\right)\|x_k-y_k\|^2  \\
&\overset{\tiny (\mbox{Ass.}~\ref{moment})}{\le}\|x_k-x^* \|^2+\tfrac{3}{4}\|x_k-y_{k-1}\|^2 \notag+ 2\gamma F(x^*)^T(x_{k-1}-x^*) - 2\alpha \gamma \mbox{dist}\left(x_k,X^*\right)  \\
\notag&  +\tfrac{16\gamma^2({\nu_1^2(\|y_k\|^2 + \|y_{k-1}\|^2)+ 2\nu_2^2)}}{N_k}-\left(1-16\gamma^2L^2\right)\|x_k-y_k\|^2\\  
\notag &\le\|x_k-x^* \|^2+\tfrac{3}{4}\|x_k-y_{k-1}\|^2 \notag+ 2\gamma F(x^*)^T(x_{k-1}-x^*) - 2\alpha \gamma \mbox{dist}\left(x_k,X^*\right)  \\
\notag &  +\tfrac{16\gamma^2\left({\nu_1^2(3\|y_k\|^2 + 2\|y_{k-1}-y_k\|^2)+ 2\nu_2^2}\right)}{N_k}-\left(1-16\gamma^2L^2\right)\|x_k-y_k\|^2\\  
\notag &\le\|x_k-x^* \|^2+\tfrac{3}{4}\|x_k-y_{k-1}\|^2 \notag+ 2\gamma F(x^*)^T(x_{k-1}-x^*) - 2\alpha \gamma \mbox{dist}\left(x_k,X^*\right)  \\
\notag &  +\tfrac{{16}\gamma^2\left({\nu_1^2(6\|y_k-x_k\|^2 + 6\|x_k\|^2 + 4\|y_{k-1}-x_k\|^2 + 4\|y_k-x_k\|^2)+ 2\nu_2^2}\right)}{N_k}-\left(1-16\gamma^2L^2\right)\|x_k-y_k\|^2\\  
\notag &\le\|x_k-x^* \|^2+\tfrac{3}{4}\|x_k-y_{k-1}\|^2 \notag+ 2\gamma F(x^*)^T(x_{k-1}-x^*) - 2\alpha \gamma \mbox{dist}\left(x_k,X^*\right)  \\
\notag &  +\tfrac{{16}\gamma^2\left({\nu_1^2(10\|y_k-x_k\|^2 + 12\|x_k-x^*\|^2 + 12\|x^*\|^2 + 4\|y_{k-1}-x_k\|^2 )+ 2\nu_2^2}\right)}{N_k}-\left(1-16\gamma^2L^2\right)\|x_k-y_k\|^2\\ \notag  
\notag &\le \left(1 + {\tfrac{192 \gamma^2 \nu_1^2}{N_k}}\right)\left(\|x_k-x^* \|^2+\tfrac{3}{4}\|x_k-y_{k-1}\|^2 + 2\gamma F(x^*)^T(x_{k-1}-x^*)\right) - 2\alpha \gamma \mbox{dist}\left(x_k,X^*\right)  \\
\notag &  +\tfrac{{{192}\gamma^2\|x^*\|^2+ {32}\gamma^2\nu_2^2}}{N_k}-\left(1-16\gamma^2L^2 - \tfrac{160\gamma^2 \nu_1^2}{N_k}\right)\|x_k-y_k\|^2\\  
& { \ = \ } \ \left(1 + {\tfrac{192 \gamma^2 \nu_1^2}{N_k}}\right)v_k - \delta_k+\psi_k, \label{eqn0}
\end{align}
where $v_k$, $\delta_k$,  and $\psi_k$ are random variables defined as 
\begin{align*}
 v_k & \triangleq \|x_k-x^* \|^2+\tfrac{3}{4}\|x_k-y_{k-1}\|^2+ 2\gamma F(x^*)^T(x_k-x^*),\\
\delta_k & \triangleq\left(1-16\gamma^2L^2- \tfrac{160\gamma^2 \nu_1^2}{N_k}\right)\|x_k-y_k\|^2+ 2\alpha
	\gamma \mbox{dist}\left(x_k,X^*\right),
\ \mathrm{ and }  \  \psi_k  \triangleq{\tfrac{{{192}\gamma^2\|x^*\|^2+ {32}\gamma^2\nu_2^2}}{N_k}}.
\end{align*}
{Since $x^* \in X^*$, $v_k \geq 0$ for every $k$ while $\psi_k \geq 0$ for every $k$ follows immediately}. In addition, by assumption, $\gamma \leq \tfrac{1}{8\tilde L}$ where   
{$$\left(16\gamma^2L^2 + \tfrac{160\gamma^2 \nu_1^2}{N_k}\right) \vvs{ \ = \ } \vvs{16\gamma^2} {\tilde L}^2\leq \tfrac{1}{4}, \mbox{ where } \tilde L^2 \triangleq \left(L^2 + \tfrac{10 \nu_1^2}{N_0}\right). $$ 
Consequently, by the choice of $\gamma$ and by noting that $\mbox{dist}(x_k,X^*) \geq 0$ for all $k$, it follows that $\psi_k \geq 0$ for all $k$. Furthermore,  $\sum_k \psi_k < \infty$ since $\sum_k
\frac{1}{N_k} < \infty$. } We may now invoke Lemma \ref{robbins} to claim
that $v_k \to \bar v \geq 0$ and $\sum_k \delta_k < \infty$ in an
	a.s. sense, implying the following holds a.s. {when $\gamma \leq \tfrac{1}{8\tilde L}$}:
\begin{align*} \infty & > \sum_k
\left(\left(1-16\gamma^2L^2- \tfrac{160\gamma^2 \nu_1^2}{N_k}\right)\|x_k-y_k\|^2 \right. \left.+2\alpha \gamma \mbox{dist}\left(x_k,X^*\right)\right) \\
	& \ge \sum_k \left(\left(1-\tfrac{1}{4}\right)\|x_k-y_k\|^2
+ 2\alpha \gamma \mbox{dist}\left(x_k,X^*\right)\right) 
 =  \sum_k \left(\tfrac{3}{4}\|x_k-y_k\|^2
+ 2\alpha \gamma \mbox{dist}\left(x_k,X^*\right)\right).
\end{align*}
Consequently, we have that 
$\infty > \sum_k \left(\tfrac{3}{4}\|x_k-y_k\|^2
+ 2\alpha \gamma \mbox{dist}\left(y_k,X^*\right)\right),$ implying that 
 $x_k-y_k \xrightarrow[a.s.]{k \to \infty}0$ and $\mbox{dist}(y_k,X^*)
\xrightarrow[a.s.]{k \to \infty} 0$. Since $\{x_k\}$ and $\{y_k\}$ have
the same set of limit points \avs{with probability one}, we may conclude that
$\mbox{dist}(x_k,X^*)\xrightarrow[a.s.]{k \to \infty} 0$ and $\{x_k\}$ is
bounded \avs{a.s.}. {It follows that \avs{with probability one}, $\{x_k\}$} has a convergent
subsequence;   we denote \avs{this subsequence by $\Kscr$ and its limit point} by \avs{$x_1^*(\omega)$. From hereafter, we suppress $\omega$ for ease of exposition}. {Since $x_k-y_k \xrightarrow[a.s.]{k \to \infty}0$
or $x_k-x_{k-1} \xrightarrow[a.s.]{k \to \infty}0$, we have
$x_k-y_{k-1} =(x_k-x_{k-1})-(x_{k-1}-x_{k-2}) \xrightarrow[a.s.]{k \to
\infty}0$. Since $\mbox{dist}(x_k,X^*) \xrightarrow[a.s.]{k \to \infty}0$, {every limit point of $\{x_k\}$ lies in $X^*$ in an a.s. sense and for any convergent subsequence $\Kscr$,} $F(x^*)^T(x_k-x^*)
\xrightarrow[a.s.]{k \in \Kscr, \ k \to \infty} F(x^*)^T(\avs{x_1}^*-x^*)
\ = \  0$ by Lemma~\ref{mn}, where $\avs{x_1}^* \in X^*$ \avs{in an a.s. sense}. Therefore, $\{\|x_k
    -y_{k-1}\|^2 + F(x^*)^T(x_k-x^*)\}$  \avs{converges in an a.s. sense}. Since $\{\|x_k-x^*\|^2 + \|x_k -y_{k-1}\|^2 +
    F(x^*)^T(x_k-x^*)\}$ \avs{converges a.s.} (Lemma~\ref{robbins}), \vvs{it follows that } $\{\|x_k-x^*\|^2\}$ \avs{converges a.s.} because $\{\|x_k-y_{k-1}\|^2+F(x^*)^T(x_k-x^*)\}$ \avs{converges a.s.}.
    %\avs{Suppose the aforementioned convergent subsequence} of $\{x_k\}$ denoted by $\Kscr$ \avs{has} limit point $x_1^* \in X^*$. 
        Since $\{\|x_k-x^*\|^2\}$ \avs{converges a.s.} for any $x^* \in
    X^*$, it also \avs{converges a.s.} for $x^* = x_1^*$ and
    $\|x_k-x_1^*\| \xrightarrow[a.s.]{k \in \Kscr, \ k \to \infty} 0$; \avs{i.e. some subsequence of $\{\|x_k-x^*_1\|^2\}$ converges to zero}. \avs{Since $\{\|x_k-x_1^*\|\}$ is convergent a.s.},  \avs{we may conclude}
that the
entire sequence} $\{x_k\}$ \avs{converges to $x^*_1 \in X^*$}  in an a.s. sense.  
%\begin{align}\label{conv-xk-yk}
%\infty > \sum_k \|x_k-y_k\|^2. % = \sum_k \|x_k - x_{k-1}\|^2. 
%\end{align}
%From~\eqref{conv-xk-yk},  
%	Furthermore, in an a.s. sense, $\sum_k \alpha \gamma \mbox{dist}\left(x_k,X^*\right)
%	< \infty$ and in an a.s.
%	sense, we have 
%$$ \lim_{k \to \infty} \mbox{dist}(x_k,X^*) = 0. $$
%\ssc{From \eqref{conv-xk-yk}, we know $x_k-x_{k-1} \to 0$ as $k \to \infty$, which means $\{x_k\}$ is convergent. Combining this with $ \lim_{k \to \infty} \mbox{dist}(x_k,X^*) = 0 $ implies that the entire sequence of $\{x_k\}$ converges to a point in $X^*$ in an a.s. sense.} 
%Since $\{x_k\}$ and $\{y_k\}$ have the same limit points almost surely, we have that $\{y_k\}$ also converges to a point in $X^*$ in an a.s. sense.
\end{proof}

\vvs{Next we derive rate statements for the averaged sequence in the merely monotone regimes without imposing a weak sharpness requirement. However, we do require a compactness requirement on $X$, a more common restriction when conducting rate analysis.} Unlike in stochastic convex
optimization where the function value represents a metric to ascertain
progress of the algorithm, a similar metric is not immediately available
for variational inequality problems. Instead, the progress of the scheme can
be ascertained by using the gap function, defined next (cf.~\cite{facchinei2007finite}).
\begin{definition}[{\bf Gap function}]
Given a nonempty closed set $X\subseteq \mathbb{R}^n$ and a mapping
$F:\mathbb{R}^n\to \mathbb{R}^n$, then the gap function at $x$ is
denoted by $G(x)$ and is defined as follows for any $x \in X$.
$$
G(x) \triangleq \sup_{y\in X}F(y)^T(x-y). 
$$
\end{definition}
The gap function is nonnegative for all $ x \in X$ and is zero if and only if
$x$ is a solution of \ic{VI}.  We establish the convergence rate for ({\bf v-SPRG})
by using the gap function. Importantly, we attain a rate of $\mathcal{O}(1/K)$
in terms of the expected gap and derive the oracle complexity.

\begin{theorem}\label{gapSPRG}
\em
\noindent Consider the ({\bf v-SPRG}) scheme and let $\{\bar{x}_K\}$ be defined as $\bar{x}_K=\sum_{k=0}^{K-1}x_{k}/K$, 
 where
$0<\gamma\leq\tfrac{1}{8\tilde L}$ where $\tilde L^2
\triangleq \left(L^2 + \tfrac{10 \nu_1^2}{N_0}\right)$ and {$\{N_k\}$ is a
non-decreasing sequence} satisfying $\sum_{k=1}^{\infty} {1\over N_k} < M$.   Let Assumptions~\ref{lip-mon}, \ref{bd5}, and \ref{moment} hold. {In addition, for any $u, v \in X$, suppose that there exists a $D_X > 0$ such that $\|u-v\|^2 \leq D_X^2$.}\\
(a) Then we have $\mathbb{E}[G(\bar{x}_K)] \leq \mathcal{O}\left(\frac{1}{K}\right)$ for any $K$. \\
(b) Suppose $N_k\triangleq\lfloor k^a\rfloor$ for $a>1$. Then the oracle complexity to ensure that $\mathbb{E}[G(\bar{x}_K)] \leq \epsilon$ satisfies 
$\sum_{k=1}^{K}N_k \leq \mathcal{O}\left(\frac{1}{\epsilon^\ssc{a+1}}\right)$.
\end{theorem}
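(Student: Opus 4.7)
The plan is to retrace the derivation of Lemma~\ref{egml1}, replacing the comparator $x^* \in X^*$ with an arbitrary $y \in X$. The only step in that proof specific to $x^* \in X^*$ was the use of monotonicity to conclude $(F(y_k)-F(x^*))^T(y_k-x^*) \ge 0$; since $F$ is monotone on $\mathbb{R}^n$, this inequality remains valid for every $y \in \mathbb{R}^n$. Consequently, for $\gamma \le \tfrac{1}{8\tilde{L}}$, the same algebra yields the gap-friendly one-step inequality
\begin{align*}
\|x_{k+1}-y\|^2 + \tfrac{3}{4}\|x_{k+1}-y_k\|^2 &\le \|x_k-y\|^2 + \tfrac{3}{4}\|x_k-y_{k-1}\|^2 + 8\gamma^2\|\bar{w}_k-\bar{w}_{k-1}\|^2 \\
&\quad - (1-16\gamma^2 L^2)\|x_k-y_k\|^2 - 2\gamma F(y)^T(y_k-y) - 2\gamma\bar{w}_k^T(y_k-y),
\end{align*}
valid for every $y \in X$.

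Summing from $k=0$ to $K-1$, dropping the nonpositive terms $-(1-16\gamma^2L^2)\|x_k-y_k\|^2$, and using the initialization $x_{-1}=x_0=y_{-1}$ yields
\[
2\gamma \sum_{k=0}^{K-1} F(y)^T(y_k-y) \le \|x_0-y\|^2 + 8\gamma^2 \sum_{k=0}^{K-1}\|\bar{w}_k-\bar{w}_{k-1}\|^2 - 2\gamma \sum_{k=0}^{K-1}\bar{w}_k^T(y_k-y).
\]
The identity $y_k=2x_k-x_{k-1}$ telescopes as $\sum_{k=0}^{K-1}(y_k-y)=\sum_{k=0}^{K-1}(x_k-y)+(x_{K-1}-x_0)$, so by linearity in $y$ and the definition $\bar{x}_K = \tfrac{1}{K}\sum_{k=0}^{K-1}x_k$,
\[
\sum_{k=0}^{K-1} F(y)^T(y_k-y) = K\, F(y)^T(\bar{x}_K-y) + F(y)^T(x_{K-1}-x_0).
\]
Rearranging, dividing by $2\gamma K$, and taking $\sup_{y\in X}$ then delivers a bound on $G(\bar{x}_K)$ in terms of initial distances, a boundary term $F(y)^T(x_0-x_{K-1})$, and two noise aggregates.

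The main technical obstacle is the coupling between $y$ and the noise in $\sum_k\bar{w}_k^T(y_k-y)$ under the supremum. The remedy I would employ is to fix an arbitrary $x^*\in X^*$ and split $-\bar{w}_k^T(y_k-y) = -\bar{w}_k^T(y_k-x^*) + \bar{w}_k^T(y-x^*)$. Since $y_k$ is $\mathcal{F}_k$-measurable and $\mathbb{E}[\bar{w}_k\mid \mathcal{F}_k] = 0$, the first summand is a martingale difference and its sum vanishes in expectation. For the second, pulling $\|y-x^*\|\le D_X$ out of the supremum and invoking the martingale orthogonality $\mathbb{E}\|\sum_k\bar{w}_k\|^2 = \sum_k\mathbb{E}\|\bar{w}_k\|^2$ reduces the matter to Assumption~\ref{moment}; combined with the compactness bound $\|y_k\|\le O(1)$, this gives $\mathbb{E}\|\bar{w}_k\|^2\le C/N_k$, and the hypothesis $\sum_k 1/N_k < M$ renders the cumulative contribution $\mathcal{O}(1)$. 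An identical estimate bounds $\mathbb{E}\sum_k\|\bar{w}_k-\bar{w}_{k-1}\|^2=\mathcal{O}(1)$, and compactness of $X$ makes the deterministic boundary terms $\mathcal{O}(1)$. Dividing by $2\gamma K$ then yields $\mathbb{E}[G(\bar{x}_K)]\le \mathcal{O}(1/K)$, establishing (a).

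For part (b), with $N_k=\lfloor k^a\rfloor$ and $a>1$ the series $\sum_k 1/N_k$ converges, so part (a) applies; to drive $\mathbb{E}[G(\bar{x}_K)]\le\epsilon$ it suffices to take $K = \Theta(1/\epsilon)$, and the oracle count is
\[
\sum_{k=1}^{K} N_k \;\le\; \sum_{k=1}^K k^a \;\le\; \tfrac{(K+1)^{a+1}}{a+1} \;=\; \mathcal{O}(K^{a+1}) \;=\; \mathcal{O}(1/\epsilon^{a+1}),
\]
as claimed.
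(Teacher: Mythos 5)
Your proposal is correct, and its skeleton (a one-step inequality valid for an arbitrary comparator $y\in X$, summation, averaging, supremum, expectation) coincides with the paper's; your observation that the only comparator-specific ingredient in Lemma~\ref{egml1} is monotonicity, which holds for every $y$, is exactly what the paper exploits when it passes from \eqref{eq10} to \eqref{eq10-r}. Where you genuinely diverge is in the treatment of the supremum--noise coupling in $\sup_{y\in X}\sum_k \bar{w}_k^T(y_k-y)$. The paper introduces an auxiliary projected sequence $u_{k+1}=\Pi_X(u_k-\gamma\bar{w}_k)$ and uses the three-point inequality \eqref{eq-uk2} so that the only surviving stochastic term, $\bar{w}_k^T(u_k-y_k)$, is a martingale difference (both arguments being $\mathcal{F}_k$-measurable) and vanishes in expectation. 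You instead anchor at a fixed $x^*\in X^*$, kill $\sum_k\bar{w}_k^T(y_k-x^*)$ by the same martingale-difference argument, and control $\sup_{y}\sum_k\bar{w}_k^T(y-x^*)\le D_X\bigl\|\sum_k\bar{w}_k\bigr\|$ via martingale orthogonality and $\sum_k 1/N_k<M$; this yields the same $\mathcal{O}(1)$ cumulative contribution with slightly less machinery, at the price of a Cauchy--Schwarz/Jensen step rather than a purely telescoping one. A second cosmetic difference is that you keep $F(y)^T(y_k-y)$ and convert to $F(y)^T(\bar{x}_K-y)$ through the global identity $\sum_{k}(y_k-y)=\sum_k(x_k-y)+(x_{K-1}-x_0)$, absorbing the $\mathcal{O}(1)$ boundary term $F(y)^T(x_{K-1}-x_0)$ by compactness, whereas the paper builds the per-step split $F(x^*)^T(y_k-x^*)=2F(x^*)^T(x_k-x^*)-F(x^*)^T(x_{k-1}-x^*)$ into the Lyapunov function of Lemma~\ref{egml1}; these are equivalent in effect. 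Part (b) is identical to the paper's argument. Both routes deliver the stated $\mathcal{O}(1/K)$ bound under the same hypotheses.
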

\begin{proof}
(a) \vvs{From~\eqref{eq10}, we obtain} 
\begin{align}
2\gamma F(y)^T(x_k-y)
\notag&\le(\|x_k-y
\|^2+\tfrac{3}{4}\|x_k-y_{k-1}\|^2+ 2\gamma F(y)^T(x_{k-1}-y))\\
	& \notag - 
(\|x_{k+1}-y \|^2+\tfrac{3}{4}\|x_{k+1}-y_k\|^2+2\gamma F(x^*)^T(x_k-y)) 
 +8\gamma^2\|\bar{w}_k-\bar{w}_{k-1}\|^2 \\
& -\left(1-16\gamma^2L^2\right)\|x_k-y_k\|^2  -2\gamma \bar{w}_k^T(y_k-x_k) - 2\gamma_k \bar{w}_k^T(x_k-y).\label{eq10-r}
\end{align}
We now define an auxiliary sequence $\{u_k\}$ such that 
$$ u_{k+1}:= \Pi_X (u_k - \gamma \bar{w}_{k}), $$
where $u_0 \in X$. \ic{We have $\|u_{k+1}-y\|^2=\|\Pi_X (u_k - \gamma \bar{w}_{k})-y\|^2 \le \|u_k - \gamma \bar{w}_{k}-y\|^2=\|u_k-y\|^2-2\gamma\bar{w}_k(y-u_k)+\gamma^2\|\bar{w}_k\|^2$}. Then we may then express the last term on the right in \eqref{eq10-r} as follows.
\begin{align}\notag
 2\gamma \bar{w}_{k}^T(\vvs{y}-x_{k}) & = 2\gamma \bar{w}_{k}^T(\vvs{y}-u_k) + 2\gamma \bar{w}_{k}^T(u_k-x_{k}) \\
\label{eq-uk2}
	& \leq \|u_k-y\|^2 - \|u_{k+1}-y\|^2 + \gamma^2 \|\bar{w}_{k}\|^2 + 2\gamma \bar{w}_{k}^T(u_k-x_{k}). 
\end{align}
Summing over $k$ {and invoking \eqref{eq-uk2}}, we obtain the following bound:
{\begin{align*}
\notag 2\gamma\sum_{k=0}^{K-1} F(y)^T(x_{k}-y) &\le\|x_0-y\|^2+\tfrac{3}{4}\|x_{0}-y_{-1}\|^2+2\gamma F(y)^T(x_0-y)\\
	& + 8\gamma^2\sum_{k=0}^{K-1} \|\bar{w}_k-\bar{w}_{k-1}\|^2 
-2\gamma \sum_{k=0}^{K-1} \bar{w}_{k}^T(y_{k}-x_k) - 2\gamma \sum_{k=0}^{K-1}\bar{w}_k^T(x_k-y) \\
\implies \tfrac{2\gamma}{K} \sum_{k=0}^{K-1} F(y)^T(x_{k}-y)& \le\tfrac{1}{K}(\|x_0-y\|^2+\tfrac{3}{4}\|x_{0}-y_{-1}\|^2+2\gamma F(y)^T(x_0-y))\\
	& +\tfrac{8\gamma^2\sum_{k=0}^{K-1} \|\bar{w}_k-\bar{w}_{k-1}\|^2}{K} 
+\tfrac{\sum_{k=0}^{K-1} 2\gamma \bar{w}_{k}^T(\vvs{x_k}-y_{k})}{K} 
+\tfrac{\sum_{k=0}^{K-1} 2\gamma \bar{w}_{k}^T(\vvs{y}-x_{k})}{K} \\
\mbox{ or }  F(y)^T(\bar{x}_{K}-y)&\le\tfrac{1}{2\gamma K}(\|x_0-y\|^2+\tfrac{3}{4}\|x_{0}-y_{-1}\|^2+2\gamma F(y)^T(x_0-y)) \\
& +\tfrac{8\gamma^2\sum_{k=0}^{K-1} \|\bar{w}_k-\bar{w}_{k-1}\|^2}{2\gamma K} 
+ \tfrac{\sum_{k=0}^{K-1} 2\gamma\bar{w}_{k}^T(\vvs{y}-x_{k})}{2\gamma K} 
+ \tfrac{\sum_{k=0}^{K-1} 2\gamma\bar{w}_{k}^T(\vvs{x_k}-y_{k})}{2\gamma K} \\
&  \le\tfrac{1}{2\gamma K}(\|x_0-y\|^2+\tfrac{3}{4}\|x_{0}-y_{-1}\|^2+2\gamma F(y)^T(x_0-y))\\
	& +\tfrac{\gamma^2\sum_{k=0}^{K-1} (8\|\bar{w}_k-\bar{w}_{k-1}\|^2)}{2\gamma K}\\ 
& + \tfrac{\|u_0-y\|^2 + \sum_{k=0}^{K-1} (\gamma^2 \|\bar{w}_{k}\|^2+ 2\gamma \bar{w}_{k}^T(u_k-x_{k}))}{2\gamma K} 
+ \tfrac{\sum_{k=0}^{K-1} 2\gamma\bar{w}_{k}^T(\vvs{x_k}-y_{k})}{2\gamma K} \\
&  \le\tfrac{1}{2\gamma K}(\|x_0-y\|^2+\|u_0-y\|^2 + \tfrac{3}{4}\|x_{0}-y_{-1}\|^2+2\gamma F(y)^T(x_0-y))\\
	& +\tfrac{\gamma^2\sum_{k=0}^{K-1} (8\|\bar{w}_k-\bar{w}_{k-1}\|^2+ \|\bar{w}_k\|^2)}{2\gamma K}
 + \tfrac{ \sum_{k=0}^{K-1} 2\gamma \bar{w}_{k}^T(u_k-y_{k})}{2\gamma K} \\
 &  \le\tfrac{C_X^2}{2\gamma K}+\tfrac{\gamma^2\sum_{k=0}^{K-1} (8\|\bar{w}_k-\bar{w}_{k-1}\|^2+\|\bar{w}_{k}\|^2)}{2\gamma K}
 + \tfrac{\sum_{k=0}^{K-1} 2\gamma \bar{w}_{k}^T(u_k-y_{k})}{2\gamma K}. 
\end{align*}}
By taking supremum over $y \in X$, we obtain the following inequality:
{\begin{align*} G(\bar{x}_{K}) \triangleq 
\sup_{y \in X} F(y)^T(\bar{x}_{K}-y) & \le\tfrac{2C_X^2}{2\gamma K}+\tfrac{\gamma^2\sum_{k=0}^{K-1} (8\|\bar{w}_k-\bar{w}_{k-1}\|^2+\|\bar{w}_{k}\|^2)}{2\gamma K}
  + \tfrac{\sum_{k=0}^{K-1} 2\gamma \bar{w}_{k}^T(u_k-y_{k}))}{2\gamma K}, 
\end{align*}
where 
\begin{align*}  \|x_0-y\|^2+\ic{\|u_0-y\|^2}+\tfrac{3}{4}\|x_{0}-y_{-1}\|^2+2\gamma F(y)^T(x_0-y) & \leq \tfrac{\ic{11}}{4} D_X^2 + 8\gamma^2 \|F(y)-F(x^*)\|^2\\
	&  + 8\gamma^2 \|F(x^*)\|^2 + 2\|x_0 - y\|^2 \\
	& \leq \tfrac{\ic{19}}{4} D_X^2 + \underbrace{8\gamma^2 L^2 D_X^2}_{\mtiny {\le \vvs{8  \gamma^2 \tilde{L}^2 D_X^2 \leq} 1/8D_X^2}} D_X^2 + 8\gamma^2 C^2 \\
	& \leq \ic{5} D_X^2 + 8\gamma^2 C^2 \triangleq C_X^2.  
\end{align*}  
}
Taking expectations on both sides, leads to the following inequality. 
\begin{align} \notag
\mathbb{E}[G(\bar{x}_{K})]& \leq  {\tfrac{C_X^2}{2\gamma K}}+\tfrac{\gamma^2\sum_{k=0}^{K-1} 8\mathbb{E}[\|\bar{w}_k-\bar{w}_{k-1}\|^2]+\mathbb{E}[\|\bar{w}_{k}\|^2]}{2\gamma K} 
+ \tfrac{\sum_{k=0}^{K-1} 2\gamma\mathbb{E}[\bar{w}_{k}^T({u_k}-y_{k})]}{2\gamma K} \\
	\notag& \leq \tfrac{C_X^2+{\gamma^2}\sum_{k=0}^{K-1} \tfrac{{\nu_1^2(17\|x_k\|^2+16\|x_{k-1}\|^2)+33\nu_2^2}}{N_k}}{2\gamma K} 
	\leq \tfrac{{C_X^2}+{\gamma^2}\sum_{k=0}^{K-1} \tfrac{{\nu_1^2(66D_X^2+66\|x^*\|^2)+33\nu_2^2}}{N_k}}{2\gamma K} \\
	&\leq {\tfrac{{C_X^2}+{\gamma^2\vvs{M}}( {{\nu_1^2(66D_X^2+66\|x^*\|^2)+33\nu_2^2}})}{2\gamma K}} {=\tfrac{\widehat{C}}{K}},
\end{align}
{by {defining} $\widehat{C}\triangleq \left(C_X^2+\gamma^2 \vvs{M}({\nu_1^2(66D_X^2+66\|x^*\|^2)+33\nu_2^2})\right)/2\gamma$}. It follows that $\mathbb{E}[G(\bar{x}_{K})] \leq \mathcal{O}(1/K).$ \\
(b) {It follows from (a) that $K=\lfloor(\tfrac{\widehat{C}}{\epsilon})\rfloor$. We have
\begin{align*}
\sum_{k=0}^{K-1}N_k&\le\sum_{k=0}^{\lfloor(\widehat{C}/\epsilon)\rfloor-1} (k+1)^a = \sum_{t=1}^{\lfloor(\widehat{C}/\epsilon)\rfloor}t^a\le\int_{1}^{(\widehat{C}/\epsilon)+1}x^a dx\le\tfrac{((\widehat{C}/\epsilon)+1)^{a+1}}{a+1}\le\left(\tfrac{{\widetilde{C}}}{\epsilon^{a+1}}\right).
\end{align*}}
\end{proof}

\subsection{Stochastic Subgradient Extragradient Schemes} \label{sec:3.2}

We begin by proving the a.s. convergence of the iterates produced by  {\bf
(v-SSE)}. Unlike {\bf (v-SPRG)}, \vvs{to show a.s. convergence}, this scheme does not require an assumption of
weak sharpness but mere monotonicity suffices.

\begin{proposition}[{\bf a.s. convergence of (v-SSE)}]\label{egcgr}
\em {Consider a sequence generated by {\bf (v-SSE)}. Let Assumptions
\ref{lip-mon} and \ref{moment} hold. Suppose
\ssc{$0<\gamma_k=\gamma\leq\tfrac{1}{\sqrt{2}\tilde L}$ where $\tilde L^2
\triangleq \left(L^2 + \tfrac{4 \nu_1^2}{N_0}\right)$} and \vvs{$\{N_k\}$ is a
non-decreasing sequence} \vvs{satisfying} $\sum_{k=1}^\infty \tfrac{1}{N_k} < M$.  Then for any
$x_0 \in X$, the sequence generated by ({\bf v-SSE}) converges to a point in $X^*$ in an a.s. sense. } 
\end{proposition}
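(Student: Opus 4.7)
The plan is to mimic the structure of Theorem~\ref{egml}, but exploiting the geometry of the halfspace $C_k$ in place of the weak-sharpness argument. First, because $x_{k+1/2}=\Pi_X(x_k-\gamma \bar F(x_k))$, Lemma~\ref{project}(i) implies $X\subseteq C_k$, hence any $x^*\in X^*$ belongs to $C_k$. Applying Lemma~\ref{project}(ii) to the projection $x_{k+1}=\Pi_{C_k}(x_k-\gamma \bar F(x_{k+1/2}))$ and splitting the inner product via $x_{k+1}-x^* = (x_{k+1}-x_{k+1/2})+(x_{k+1/2}-x^*)$, I would use monotonicity of $F$ together with $F(x^*)^T(x_{k+1/2}-x^*)\ge 0$ (since $x_{k+1/2}\in X$) to discard the term $-2\gamma F(x_{k+1/2})^T(x_{k+1/2}-x^*)$ as nonpositive.

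Next, the key step is controlling $-2\gamma F(x_{k+1/2})^T(x_{k+1}-x_{k+1/2})$. Writing $F(x_{k+1/2})=F(x_k)+\bigl(F(x_{k+1/2})-F(x_k)\bigr)$, I would bound the Lipschitz part by Young's inequality and handle $F(x_k)^T(x_{k+1}-x_{k+1/2})$ via the defining inequality of $C_k$: $(x_k-\gamma \bar F(x_k)-x_{k+1/2})^T(x_{k+1}-x_{k+1/2})\le 0$, which, combined with $\bar F(x_k)=F(x_k)+\bar w_k$ and the identity $-2(a-b)^T(c-b)=\|c-b\|^2-\|a-b\|^2-\|c-a\|^2+2\|\cdot\|$-style manipulation, yields an estimate involving $\|x_k-x_{k+1/2}\|^2$, $\|x_{k+1}-x_{k+1/2}\|^2$, $\|x_k-x_{k+1}\|^2$, and a noise cross-term in $\bar w_k$. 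Choosing the Young's parameter in the Lipschitz bound as $\rho=2$ leaves room so that, after cancellations, one obtains
\begin{align*}
\|x_{k+1}-x^*\|^2 &\le \|x_k-x^*\|^2 + (2\gamma^2L^2-1)\|x_{k+1/2}-x_k\|^2 -\tfrac{1}{2}\|x_{k+1}-x_{k+1/2}\|^2 \\
& \quad + 2\gamma \bar w_k^T(x_{k+1}-x_{k+1/2}) - 2\gamma \bar w_{k+1/2}^T(x_{k+1}-x^*).
\end{align*}

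For the stochastic terms, I would split $\bar w_{k+1/2}^T(x_{k+1}-x^*)=\bar w_{k+1/2}^T(x_{k+1/2}-x^*)+\bar w_{k+1/2}^T(x_{k+1}-x_{k+1/2})$; the first has zero conditional mean given $\mathcal{F}_{k+1/2}$ (since $x_{k+1/2}\in\mathcal{F}_{k+1/2}$), and the latter and the $\bar w_k$ term are dispatched by Young's inequality with parameter $1/4$ on $\|x_{k+1}-x_{k+1/2}\|^2$, which exactly absorbs the free $-\tfrac{1}{2}\|x_{k+1}-x_{k+1/2}\|^2$. After taking $\mathbb{E}[\cdot\mid\mathcal{F}_k]$ (via the tower property) and using Assumption~\ref{moment} together with the variance reduction factor $1/N_k$, and $\|x_{k+1/2}\|^2\le 2\|x_{k+1/2}-x_k\|^2+2\|x_k\|^2$ and $\|x_k\|^2\le 2\|x_k-x^*\|^2+2\|x^*\|^2$, I would obtain a recursion
\begin{align*}
\mathbb{E}[\|x_{k+1}-x^*\|^2\mid\mathcal{F}_k] \le \Bigl(1+\tfrac{c_1}{N_k}\Bigr)\|x_k-x^*\|^2 - \bigl(1-2\gamma^2\tilde L^2\bigr)\mathbb{E}[\|x_{k+1/2}-x_k\|^2\mid\mathcal{F}_k] + \tfrac{c_2}{N_k},
\end{align*}
where $\tilde L^2=L^2+4\nu_1^2/N_0$. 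The stipulation $\gamma\le 1/(\sqrt 2\tilde L)$ makes the $\|x_{k+1/2}-x_k\|^2$ coefficient nonnegative, and $\sum 1/N_k<\infty$ makes $u_k,\psi_k$ summable, so Lemma~\ref{robbins} applies.

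From Robbins--Siegmund I conclude $\|x_k-x^*\|^2$ converges a.s.\ for every $x^*\in X^*$, $\{x_k\}$ is a.s.\ bounded, and $\|x_{k+1/2}-x_k\|\xrightarrow[]{a.s.}0$. Any a.s.\ limit point $x_1^*$ of a subsequence $\{x_k\}_{k\in\Kscr}$ satisfies, by continuity of $F$, $\Pi_X$, and the vanishing of the noise-averaged term as $N_k\to\infty$, the fixed point equation $x_1^*=\Pi_X(x_1^*-\gamma F(x_1^*))$, which characterizes $x_1^*\in X^*$. Applying the convergence of $\|x_k-x^*\|^2$ to the specific choice $x^*=x_1^*$ upgrades subsequential convergence to convergence of the entire sequence a.s., mirroring the closing argument of Theorem~\ref{egml}. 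The main obstacle I anticipate is the careful calibration of Young's constants so that the $\|x_{k+1}-x_{k+1/2}\|^2$ terms cancel \emph{and} the $\|x_{k+1/2}-x_k\|^2$ coefficient remains suitable for absorption into the step-size condition $\gamma\le 1/(\sqrt 2\tilde L)$; a secondary subtlety is the $\mathcal{F}_k$-measurability required by Lemma~\ref{robbins}, which is handled by writing $\delta_k$ in terms of the conditional expectation $\mathbb{E}[\|x_{k+1/2}-x_k\|^2\mid\mathcal{F}_k]$ rather than the raw quantity.
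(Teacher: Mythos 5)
Your derivation of the main recursion coincides with the paper's: both apply Lemma~\ref{project}(ii) to the projection onto $C_k$, split $F(x_{k+1/2})^T(x_{k+1}-x^*)$ across $x_{k+1/2}$, invoke the defining inequality of the halfspace $(x_{k+1}-x_{k+1/2})^T(x_k-\gamma(F(x_k)+\bar w_k)-x_{k+1/2})\le 0$, absorb the Lipschitz cross-term by Young's inequality, discard $-2\gamma F(x_{k+1/2})^T(x_{k+1/2}-x^*)$ via monotonicity, kill the $\bar w_{k+1/2}$ term by the tower property, and land on a Robbins--Siegmund recursion with coefficient $1-2\gamma^2\tilde L^2$ on $\mathbb{E}[\|x_{k+1/2}-x_k\|^2\mid\mathcal{F}_k]$. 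Where you diverge is the identification of limit points. The paper introduces the \emph{deterministic} residual $r_\gamma(x)\triangleq\|x-\Pi_X(x-\gamma F(x))\|$ and uses the bound $-\tfrac12\|x_k-x_{k+1/2}\|^2\le-\tfrac14 r_\gamma^2(x_k)+\tfrac12\gamma^2\|\bar w_k\|^2$, pushing the noise into the summable perturbation $\psi_k$; Robbins--Siegmund then yields $\sum_k r_\gamma^2(x_k)<\infty$ a.s., so $r_\gamma(x_k)\to 0$, and continuity of $r_\gamma$ immediately places every limit point in $X^*$. You instead conclude $\|x_{k+1/2}-x_k\|\to 0$ and pass to the fixed-point equation $x_1^*=\Pi_X(x_1^*-\gamma F(x_1^*))$, citing ``the vanishing of the noise-averaged term.''

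That last step is the one genuine gap: since $x_{k+1/2}=\Pi_X(x_k-\gamma F(x_k)-\gamma\bar w_k)$, the statement $\|x_{k+1/2}-x_k\|\to 0$ does not by itself yield $r_\gamma(x_k)\to 0$; you need $\bar w_k\to 0$ almost surely. This is true under your hypotheses---a.s.\ boundedness of $\{x_k\}$ (which you have from the convergence of $\|x_k-x^*\|$) together with $\mathbb{E}[\|\bar w_k\|^2\mid\mathcal{F}_k]\le(\nu_1^2\|x_k\|^2+\nu_2^2)/N_k$ and $\sum_k 1/N_k<\infty$ gives $\sum_k\|\bar w_k\|^2<\infty$ a.s.\ by a conditional Borel--Cantelli or supermartingale argument---but it must be stated and proved, not assumed. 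The paper's residual-function device sidesteps this entirely by separating the deterministic part of the half-step from the noise before invoking Robbins--Siegmund, which is why I would recommend adopting it: it buys you a clean ``$\sum_k\delta_k<\infty$ implies the continuous residual vanishes'' argument at no extra cost. Your calibration of the Young constants and the measurability fix via $\mathbb{E}[\|x_{k+1/2}-x_k\|^2\mid\mathcal{F}_k]$ are both correct and match the paper's treatment.
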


\begin{proof}
By Lemma \ref{project}(ii) we have for any $x^*$,
\begin{align}
\notag\|x_{k+1}-x^*\|^2&\le\|x_k-\gamma_k(F(x_{k+\frac{1}{2}})+\bar{w}_{k+\frac{1}{2}})-x^*\|^2-\|x_k-\gamma_k(F(x_{k+\frac{1}{2}})+\bar{w}_{k+\frac{1}{2}})-x_{k+1}\|^2\\
&=\|x_k-x^*\|^2-\|x_k-x_{k+1}\|^2+2\gamma_k(F(x_{k+\frac{1}{2}})+\bar{w}_{k+\frac{1}{2}})^T(x^*-x_{k+1}).\label{eq22}
\end{align}
It is clear that
\begin{align}
\notag F(x_{k+\frac{1}{2}})^T(x_{k+1}-x^*)&=F(x_{k+\frac{1}{2}})^T(x_{k+1}-x_{k+\frac{1}{2}}+x_{k+\frac{1}{2}}-x^*) \\
&= F(x_{k+\frac{1}{2}})^T(x_{k+1}-x_{k+\frac{1}{2}})+F(x_{k+\frac{1}{2}})^T(x_{k+\frac{1}{2}}-x^*). \label{eq23}
\end{align}
Substituting \eqref{eq23} in \eqref{eq22}, we obtain
\begin{align}
\notag\|x_{k+1}-x^*\|^2&\le\|x_k-x^*\|^2-\|x_k-x_{k+1}\|^2+2\gamma_kF(x_{k+\frac{1}{2}})^T(x_{k+\frac{1}{2}}-x_{k+1})\\\notag&-2\gamma_kF(x_{k+\frac{1}{2}})^T(x_{k+\frac{1}{2}}-x^*)+2\gamma_k\bar{w}_{k+\frac{1}{2}}^T(x^*-x_{k+1})\\
\notag&=\|x_k-x^*\|^2-\|x_k-x_{k+\frac{1}{2}}+x_{k+\frac{1}{2}}-x_{k+1}\|^2+2\gamma_kF(x_{k+\frac{1}{2}})^T(x_{k+\frac{1}{2}}-x_{k+1})\\
\notag&-2\gamma_kF(x_{k+\frac{1}{2}})^T(x_{k+\frac{1}{2}}-x^*)+2\gamma_k\bar{w}_{k+\frac{1}{2}}^T(x^*-x_{k+1})\\
\notag&=\|x_k-x^*\|^2-\|x_k-x_{k+\frac{1}{2}}\|^2-\|x_{k+\frac{1}{2}}-x_{k+1}\|^2-2(x_k-x_{k+\frac{1}{2}})^T(x_{k+\frac{1}{2}}-x_{k+1})\\
\notag&+2\gamma_kF(x_{k+\frac{1}{2}})^T(x_{k+\frac{1}{2}}-x_{k+1})-2\gamma_kF(x_{k+\frac{1}{2}})^T(x_{k+\frac{1}{2}}-x^*)+2\gamma_k\bar{w}_{k+\frac{1}{2}}^T(x^*-x_{k+1})\\
\notag&=\|x_k-x^*\|^2-\|x_k-x_{k+\frac{1}{2}}\|^2-\|x_{k+\frac{1}{2}}-x_{k+1}\|^2-2\gamma_kF(x_{k+\frac{1}{2}})^T(x_{k+\frac{1}{2}}-x^*)\\
&+2(x_{k+1}-x_{k+\frac{1}{2}})^T(x_k-\gamma_kF(x_{k+\frac{1}{2}})-x_{k+\frac{1}{2}})+2\gamma_k\bar{w}_{k+\frac{1}{2}}^T(x^*-x_{k+1}). \label{eq24}
\end{align}
By definition of $C_k$, we have
\begin{align}
(x_{k+1}-x_{k+\frac{1}{2}})^T(x_k-\gamma_k(F(x_k)+\bar{w}_k)-x_{k+\frac{1}{2}})\le0. \label{eq25}
\end{align}
Substituting \eqref{eq25} in \eqref{eq24}, we deduce that
\begin{align}
\notag&\|x_{k+1}-x^*\|^2\le\|x_k-x^*\|^2-\|x_k-x_{k+\frac{1}{2}}\|^2-\|x_{k+\frac{1}{2}}-x_{k+1}\|^2-2\gamma_kF(x_{k+\frac{1}{2}})^T(x_{k+\frac{1}{2}}-x^*) \\
\notag&+2\gamma_k(x_{k+1}-x_{k+\frac{1}{2}})^T(F(x_k)-F(x_{k+\frac{1}{2}}))+2\gamma_k \bar{w}_k^T(x_{k+1}-x_{k+\frac{1}{2}})+2\gamma_k\bar{w}_{k+\frac{1}{2}}^T(x^*-x_{k+1})\\
\notag&\le\|x_k-x^*\|^2-\|x_k-x_{k+\frac{1}{2}}\|^2-\|x_{k+\frac{1}{2}}-x_{k+1}\|^2-2\gamma_kF(x_{k+\frac{1}{2}})^T(x_{k+\frac{1}{2}}-x^*) \\
\notag&+2\gamma_k\|x_{k+1}-x_{k+\frac{1}{2}}\|\|F(x_k)-F(x_{k+\frac{1}{2}})\|+2\gamma_k(\bar{w}_k-\bar{w}_{k+\frac{1}{2}})^T(x_{k+1}-x_{k+\frac{1}{2}})+2\gamma_k\bar{w}_{k+\frac{1}{2}}^T(x^*-x_{k+\frac{1}{2}})\\
\notag&\le\|x_k-x^*\|^2-\|x_k-x_{k+\frac{1}{2}}\|^2-\|x_{k+\frac{1}{2}}-x_{k+1}\|^2+\tfrac{1}{2}\|x_{k+1}-x_{k+\frac{1}{2}}\|^2+2\gamma_k^2L^2\|x_k-x_{k+\frac{1}{2}}\|^2 \\
\notag&+2\gamma_k^2\|\bar{w}_k-\bar{w}_{k+\frac{1}{2}}\|^2+\tfrac{1}{2}\|x_{k+1}-x_{k+\frac{1}{2}}\|^2-2\gamma_kF(x_{k+\frac{1}{2}})^T(x_{k+\frac{1}{2}}-x^*)+2\gamma_k\bar{w}_{k+\frac{1}{2}}^T(x^*-x_{k+\frac{1}{2}})\\
\notag&=\|x_k-x^*\|^2-(1-2\gamma_k^2L^2)\|x_k-x_{k+\frac{1}{2}}\|^2+2\gamma_k^2\|\bar{w}_k-\bar{w}_{k+\frac{1}{2}}\|^2 \\
\notag&-2\gamma_kF(x_{k+\frac{1}{2}})^T(x_{k+\frac{1}{2}}-x^*)+2\gamma_k\bar{w}_{k+\frac{1}{2}}^T(x^*-x_{k+\frac{1}{2}}) \\
\notag&=\|x_k-x^*\|^2-(1-2\gamma^2L^2)\|x_k-x_{k+\frac{1}{2}}\|^2+2\gamma^2\|\bar{w}_k-\bar{w}_{k+\frac{1}{2}}\|^2 \\
&-2\gamma F(x_{k+\frac{1}{2}})^T(x_{k+\frac{1}{2}}-x^*)+2\gamma \bar{w}_{k+\frac{1}{2}}^T(x^*-x_{k+\frac{1}{2}}),
\label{eq251}
\end{align} 
by noticing that $\gamma_k=\gamma$.  Define $r_\gamma(x) \triangleq \|x-\Pi_X(x-\gamma F(x))\|$ as a residual function. We have
\begin{align}
\notag r^2_\gamma(x_k)&=\|x_k-\Pi_X(x_k-\gamma F(x_k))\|^2 \\
\notag&=\|x_k-x_{k+\frac{1}{2}}+\Pi_X(x_k-\gamma {F}(x_k)-\gamma \bar{w}_k)-\Pi_X(x_k-\gamma F(x_k))\| \\
\notag&\le2\|x_k-x_{k+\frac{1}{2}}\|^2+2\gamma^2\|\bar{w}_k\|^2.
\end{align}
It follows that
\begin{align}
-\tfrac{1}{2}\|x_k-x_{k+\frac{1}{2}}\|^2\le-\tfrac{1}{4}r^2_\gamma(x_k)+\tfrac{1}{2}\gamma^2\|\bar{w}_k\|^2.\label{eqn1}
\end{align}
Using \eqref{eqn1} in \eqref{eq251}, we obtain
\begin{align}
\notag&\|x_{k+1}-x^*\|^2\le\|x_k-x^*\|^2-\left(\tfrac{1}{2}-2\gamma^2L^2\right)\|x_k-x_{k+\frac{1}{2}}\|^2+2\gamma^2\|\bar{w}_k-\bar{w}_{k+\frac{1}{2}}\|^2 \\
\notag&-2\gamma F(x_{k+\frac{1}{2}})^T(x_{k+\frac{1}{2}}-x^*)+2\gamma \bar{w}_{k+\frac{1}{2}}^T(x^*-x_{k+\frac{1}{2}})-\tfrac{1}{2}\|x_k-x_{k+\frac{1}{2}}\|^2 \\
\notag&\le\|x_k-x^*\|^2-\left(\tfrac{1}{2}-2\gamma^2L^2\right)\|x_k-x_{k+\frac{1}{2}}\|^2+2\gamma^2\|\bar{w}_k-\bar{w}_{k+\tfrac{1}{2}}\|^2 \\
\notag&-2\gamma F(x_{k+\frac{1}{2}})^T(x_{k+\frac{1}{2}}-x^*)+2\gamma \bar{w}_{k+\frac{1}{2}}^T(x^*-x_{k+\frac{1}{2}})-\tfrac{1}{4}r^2_\gamma(x_k)+\tfrac{1}{2}\gamma^2\|\bar{w}_k\|^2 \\
\notag&\le\|x_k-x^*\|^2-\left(\tfrac{1}{2}-2\gamma^2L^2\right)\|x_k-x_{k+\frac{1}{2}}\|^2+\tfrac{9}{2}\gamma^2\|\bar{w}_k\|^2+4\gamma^2\|\bar{w}_{k+\frac{1}{2}}\|^2 \\
\notag&-2\gamma F(x_{k+\frac{1}{2}})^T(x_{k+\frac{1}{2}}-x^*)+2\gamma \bar{w}_{k+\frac{1}{2}}^T(x^*-x_{k+\frac{1}{2}})-\tfrac{1}{4}r^2_\gamma(x_k).
\end{align}
Taking expectations conditioned on $\mathcal{F}_{k}$, we 
obtain the following bound:
\begin{align}
\notag\mathbb{E}[\|x_{k+1}-x^*\|^2\mid\mathcal{F}_{k}] &\le\|x_k-x^*\|^2-(1-2\gamma^2L^2){\mathbb{E}[}\|x_k-x_{k+\frac{1}{2}}\|^2{\mid\mathcal{F}_{k}]}\\\notag&+\mathbb{E}[\mathbb{E}[4\gamma^2\|\bar{w}_{k+\frac{1}{2}}\|^2\mid\mathcal{F}_{k+\frac{1}{2}}]\mid\mathcal{F}_{k}]+\mathbb{E}\left[\tfrac{9}{2}\gamma^2\|\bar{w}_{k}\|^2\mid\mathcal{F}_{k}\right]\\\notag&-\mathbb{E}[\mathbb{E}[2\gamma \bar{w}_{k+\frac{1}{2}}^T(x_{k+\frac{1}{2}}-x^*)\mid\mathcal{F}_{k+\frac{1}{2}}]\mid\mathcal{F}_{k}]-\tfrac{1}{4}r^2_\gamma(x_k) \\
\notag & \leq \|x_k-x^*\|^2-(1-2\gamma^2L^2)\mathbb{E}[\|x_k-x_{k+\frac{1}{2}}\|^2{\mid\mathcal{F}_{k}]}\\\notag&+{\tfrac{4\gamma^2(\nu_1^2 {\mathbb{E}[}\|x_{k+\frac{1}{2}}\|^2{\mid\mathcal{F}_{k}]} + \nu^2_2)}{N_k}+\tfrac{\frac{9}{2}\gamma^2(\nu_1^2 \|x_k\|^2 + \nu^2_2)}{N_k}} 
-\tfrac{1}{4}r^2_\gamma(x_k) \\
\notag & \leq \|x_k-x^*\|^2-(1-2\gamma^2L^2){\mathbb{E}[}\|x_k-x_{k+\frac{1}{2}}\|^2{\mid\mathcal{F}_{k}]}\\\notag&+{\tfrac{4\gamma^2(2\nu_1^2 {\mathbb{E}[}\|x_k-x_{k+\frac{1}{2}}\|^2{\mid\mathcal{F}_{k}]} + 2\nu_1^2 \|x_k\|^2 + \nu^2_2)}{N_k}+\tfrac{\frac{9}{2}\gamma^2(\nu_1^2 \|x_k\|^2 + \nu^2_2)}{N_k}} 
-\tfrac{1}{4}r^2_\gamma(x_k) \\
\notag & \leq \|x_k-x^*\|^2-(1-2\gamma^2L^2- \tfrac{8\gamma^2 \nu_1^2}{N_k}){\mathbb{E}[}\|x_k-x_{k+\frac{1}{2}}\|^2{\mid\mathcal{F}_{k}]}\\&+{\tfrac{\tfrac{25}{2}\gamma^2(2\nu_1^2 \|x_k-x^*\|^2 + 2\nu_1^2 \|x^*\|^2)}{N_k}+{\tfrac{17\gamma^2\nu_2^2}{2N_k}}} 
-\tfrac{r^2_\gamma(x_k)}{4} \label{xk2} \\
\notag&\le \left(1+ \ssc{\tfrac{25\gamma^2 \nu_1^2}{N_k}}\right)\|x_k-x^*\|^2+\ssc{\tfrac{25\gamma^2\nu_1^2\|x^*\|^2}{N_k}+{\tfrac{17\gamma^2\nu_2^2}{2N_k}}} -\tfrac{r^2_\gamma(x_k)}{4}, 
\end{align}
{where the penultimate inequality follows from noting that $1-2\gamma^2L^2-
\tfrac{8\gamma^2 \nu_1^2}{N_k} \leq 1- 2\gamma^2(L^2+ \tfrac{4\gamma^2
\nu_1^2}{N_0})) \leq 1$, if $\gamma < \tfrac{1}{\sqrt 2 \tilde L}$ and $\tilde
L \triangleq L^2+ \tfrac{4\gamma^2 \nu_1^2}{N_0}.$}  We may now apply
Lemma~\ref{robbins} which allows us to claim that $\{\|x_k-x^*\|\}$ is
convergent \vvs{for any $x^* \in X^*$} and $\sum_{k}r_\gamma(x_k)^2<\infty$ in an a.s.  sense. Therefore,
in an a.s.
	sense, we have 
$$ \lim_{k \to \infty} r_\gamma(x_k)^2 = 0. $$
{{Since $\{\|x_k-x^*\|^2\}$ is a convergent sequence in an a.s. sense,
$\{x_k\}$ is bounded a.s. and has a convergent subsequence.} Consider any
convergent subsequence of $\{x_k\}$ {with index set} denoted by ${\cal K}$
\vvs{and suppose} its limit point is \vvs{denoted by} $\bar{x}(\omega)$. \avs{The dependence of $\bar{x}$ on $\omega$ is suppressed for ease of exposition.}  We have that
$\avs{0 = } \lim_{k \in {\cal K}} r_{\gamma}(x_k) = r_{\gamma}({\bar x})$ \avs{in an a.s. sense} since
$r_{\gamma}(\cdot)$ is a continuous map. It follows that $\bar x$ is a solution
to $\mbox{VI}(X,F)$ \avs{in an a.s. sense}. 
%\vvs{Consequently, some convergent subsequence of $\{x_k\}_{k \geq 0}$ }, denoted by $\Kscr$, \vvs{is} such that $\lim_{k \in \Kscr} x_k = x^*_1 \in X^*$. 
\vvs{Since $\{\|x_k-x^*\|^2\}$ is convergent \avs{a.s.} for any
    $x^* \in X^*$, it follows}  that $\{\|x_k-\avs{\bar{x}}\|^2\}$ is convergent  \avs{a.s.} \avs{since $\bar{x} \in X^*$ in an a.s. sense}. \avs{Since a subsequence of $\{\|x_k-\bar{x}\|^2\}$, denoted by $\Kscr$, converges to zero a.s., the entire sequence $\{\|x_k-\bar{x}\|^2\}$ converges to zero.} Therefore, the entire sequence $\{x_k\}$ converges \avs{a.s.} to a point in $X^*$.} \end{proof}

\vvs{We now proceed to derive a rate statement in terms of the gap function by imposing an extra compactness requirement.} 

\begin{proposition}\label{gapCGR}
\em
\noindent Consider the ({\bf v-SSE}) scheme and let $\{\bar{x}_K\}$ be defined as $\bar{x}_K=\tfrac{\sum_{k=0}^{K-1}x_{k+\frac{1}{2}}}{K}$, 
 where
{$0<\gamma\leq\tfrac{1}{\sqrt{2}\tilde L}$ where $\tilde L^2
\triangleq \left(L^2 + \tfrac{4 \nu_1^2}{N_0}\right)$} \vvs{and} {$\{N_k\}$ is a
non-decreasing sequence} \vvs{such that} $\sum_{k=1}^{\infty} {1\over N_k} < M$.
 Let Assumptions~\ref{lip-mon}, \ref{bd5} and \ref{moment}
hold. {In addition, suppose there exists a $D_X > 0$ such that $\|u-v\|^2 \leq D_X^2$ for any $u, v \in X$.} \\
(a) Then we have $\mathbb{E}[G(\bar{x}_K)] \leq \mathcal{O}\left(\frac{1}{K}\right)$ for any $K$. \\
(b) Suppose $N_k\triangleq\lfloor k^a\rfloor$, for $a>1$. Then the oracle complexity to ensure that $\mathbb{E}[G(\bar{x}_K)] \leq \epsilon$ satisfies 
$\sum_{k=1}^{K}N_k \leq \mathcal{O}\left(\frac{1}{\epsilon^{a+1}}\right)$.
\end{proposition}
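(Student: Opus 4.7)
The plan is to mirror the structure of Theorem \ref{gapSPRG}, starting from the per-iteration inequality derived in the proof of Proposition \ref{egcgr} but replacing the fixed solution $x^*$ with an arbitrary test vector $y \in X$. Specifically, I would retrace the steps leading to \eqref{eq251} keeping $y$ in place of $x^*$; the monotonicity of $F$ was never used to eliminate $F(x^*)^T(x_{k+1/2}-x^*)$ in that derivation, so the identical chain of projection-based estimates gives
\begin{align*}
2\gamma F(x_{k+\frac{1}{2}})^T(x_{k+\frac{1}{2}}-y) &\le \|x_k-y\|^2 - \|x_{k+1}-y\|^2 \\
&\quad -(1-2\gamma^2L^2)\|x_k-x_{k+\frac{1}{2}}\|^2 + 2\gamma^2\|\bar{w}_k-\bar{w}_{k+\frac{1}{2}}\|^2 + 2\gamma\bar{w}_{k+\frac{1}{2}}^T(y-x_{k+\frac{1}{2}}).
\end{align*}
Invoking monotonicity of $F$ in the form $F(y)^T(x_{k+1/2}-y) \leq F(x_{k+1/2})^T(x_{k+1/2}-y)$ lets me replace the left-hand side by $2\gamma F(y)^T(x_{k+1/2}-y)$, so after summing from $k=0$ to $K-1$ the $\|x_k-y\|^2-\|x_{k+1}-y\|^2$ terms telescope, leaving an inequality whose left-hand side, divided by $K$, is a lower bound for $F(y)^T(\bar{x}_K - y)$ by convexity of the linear functional.

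The main obstacle is dealing with the noise term $2\gamma \bar{w}_{k+1/2}^T(y - x_{k+1/2})$ when subsequently taking the supremum over $y \in X$: the dependence of $y$ on the sample path prevents a direct use of $\mathbb{E}[\bar{w}_{k+1/2}\mid \mathcal{F}_{k+1/2}] = 0$. I would handle this with the same auxiliary sequence device used in the proof of Theorem \ref{gapSPRG}: introduce $u_{k+1} \triangleq \Pi_X(u_k - \gamma \bar{w}_{k+1/2})$ with $u_0 \in X$, and split
\[
2\gamma\bar{w}_{k+\frac{1}{2}}^T(y-x_{k+\frac{1}{2}}) = 2\gamma\bar{w}_{k+\frac{1}{2}}^T(y-u_k) + 2\gamma\bar{w}_{k+\frac{1}{2}}^T(u_k-x_{k+\frac{1}{2}}),
\]
using the non-expansive projection bound $\|u_{k+1}-y\|^2 \leq \|u_k-y\|^2 - 2\gamma\bar{w}_{k+1/2}^T(y-u_k) + \gamma^2\|\bar{w}_{k+1/2}\|^2$ on the first summand so that it also telescopes, independently of $y$. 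After summation, taking the supremum over $y \in X$ yields an upper bound on $G(\bar{x}_K)$ of the form $C_X^2/(2\gamma K)$ plus noise-dependent averages.

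Taking expectations, the cross-term $2\gamma \mathbb{E}[\bar{w}_{k+1/2}^T(u_k - x_{k+1/2})]$ vanishes by the tower property and conditional unbiasedness (Assumption \ref{moment}(i)), while $\mathbb{E}[\|\bar{w}_k\|^2]$ and $\mathbb{E}[\|\bar{w}_{k+1/2}\|^2]$ are each bounded by $(\nu_1^2(2\|x_k-x^*\|^2 + 2\|x^*\|^2)+\nu_2^2)/N_k$ (and analogously for $x_{k+1/2}$) via Assumption \ref{moment}(ii). Compactness of $X$ gives $\|x_k\|, \|x_{k+1/2}\| \leq \|x^*\| + D_X$ almost surely, so the noise-induced terms are bounded by a constant multiple of $\sum_k 1/N_k < M$. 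This yields
\[
\mathbb{E}[G(\bar{x}_K)] \leq \frac{C_X^2 + \gamma^2 M(\nu_1^2 \cdot \text{const} + \nu_2^2 \cdot \text{const})}{2\gamma K} = \mathcal{O}(1/K),
\]
establishing (a).

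For (b), the argument is identical to that of Theorem \ref{gapSPRG}(b): set $K = \lceil \widehat{C}/\epsilon \rceil$ and bound
\[
\sum_{k=0}^{K-1} N_k \le \sum_{k=1}^{\lceil \widehat{C}/\epsilon\rceil} k^a \le \int_1^{(\widehat{C}/\epsilon)+1} x^a\, dx \le \frac{((\widehat{C}/\epsilon)+1)^{a+1}}{a+1} = \mathcal{O}(1/\epsilon^{a+1}).
\]
I expect no new difficulty in part (b); the true technical content lies in the auxiliary-sequence trick and in carefully re-deriving the (v-SSE) descent inequality with an arbitrary test point rather than a solution.
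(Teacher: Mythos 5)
Your proposal is correct and follows essentially the same route as the paper's proof: re-deriving the per-iteration inequality \eqref{eq251} with an arbitrary $y\in X$, invoking monotonicity to pass from $F(x_{k+\frac{1}{2}})$ to $F(y)$, telescoping, and handling the $y$-dependent noise term with the auxiliary sequence $u_{k+1}=\Pi_X(u_k-\gamma\bar{w}_{k+\frac{1}{2}})$ exactly as in \eqref{eq-uk}. Part (b) likewise matches the paper's argument, which simply refers back to Proposition~\ref{gapSPRG}(b).
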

\begin{proof}
(a) {From~\eqref{eq251}, we obtain} 
\begin{align}\notag
 \quad 2\gamma F(y)^T(x_{k+\frac{1}{2}}-y)&\le\|x_k-y\|^2-\|x_{k+1}-y\|^2-(1-2\gamma^2L^2)\|x_k-x_{k+\frac{1}{2}}\|^2+2\gamma^2\|\bar{w}_k-\bar{w}_{k+\frac{1}{2}}\|^2 \\
&+2\gamma \bar{w}_{k+\frac{1}{2}}^T({y}-x_{k+\frac{1}{2}}). \label{eq28a}
\end{align}
{We now define an auxiliary sequence $\{u_k\}$ such that 
$$ u_{k+1}:= \Pi_X (u_k - \gamma \bar{w}_{k+\frac{1}{2}}), $$
where $u_0 \in X$. With a similar analysis in \eqref{eq-uk2}, we may then express the last term on the right in \eqref{eq28a} as follows.
\begin{align}\notag
 2\gamma \bar{w}_{k+\frac{1}{2}}^T({y}-x_{k+\frac{1}{2}}) & = 2\gamma \bar{w}_{k+\frac{1}{2}}^T({y}-u_k) + 2\gamma \bar{w}_{k+\frac{1}{2}}^T(u_k-x_{k+\frac{1}{2}}) \\
\label{eq-uk}
	& \leq \|u_k-y\|^2 - \|u_{k+1}-y\|^2 + \gamma^2 \|\bar{w}_{k+\frac{1}{2}}\|^2 + 2\gamma \bar{w}_{k+\frac{1}{2}}^T(u_k-x_{k+\frac{1}{2}}). 
\end{align}
} 
Summing over $k$ {and invoking \eqref{eq-uk}}, we obtain the following bound:
{\begin{align*}
\notag \quad \sum_{k=0}^{K-1}2\gamma F(y)^T(x_{k+\frac{1}{2}}-y)& \le\|x_0-y\|^2+2\gamma^2\sum_{k=0}^{K-1} \|\bar{w}_k-\bar{w}_{k+\frac{1}{2}}\|^2 
+2\gamma \sum_{k=0}^{K-1} \bar{w}_{k+\frac{1}{2}}^T({y}-x_{k+\frac{1}{2}}) \\
\implies \tfrac{2\gamma}{K} \sum_{k=0}^{K-1} F(y)^T(x_{k+\frac{1}{2}}-y)& \le\tfrac{1}{K}\|x_0-y\|^2+\tfrac{2\gamma^2\sum_{k=0}^{K-1} \|\bar{w}_k-\bar{w}_{k+\frac{1}{2}}\|^2}{K} 
+\tfrac{\sum_{k=0}^{K-1} 2\gamma \bar{w}_{k+\frac{1}{2}}^T({y}-x_{k+\frac{1}{2}})}{K} \\
\mbox{ or }  F(y)^T(\bar{x}_{K}-y)&\le\tfrac{1}{2\gamma K}\|x_0-y\|^2+\tfrac{2\gamma^2\sum_{k=0}^{K-1} \|\bar{w}_k-\bar{w}_{k+\frac{1}{2}}\|^2}{2\gamma K} 
+ \tfrac{\sum_{k=0}^{K-1} 2\gamma\bar{w}_{k+\frac{1}{2}}^T({y}-x_{k+\frac{1}{2}})}{2\gamma K} \\
&  \le\tfrac{1}{2\gamma K}\|x_0-y\|^2+\tfrac{\gamma\sum_{k=0}^{K-1} 2\|\bar{w}_k-\bar{w}_{k+\frac{1}{2}}\|^2}{2K}\\ 
& + \tfrac{\|u_0-y\|^2 + \sum_{k=0}^{K-1} (\gamma^2 \|\bar{w}_{k+\frac{1}{2}}\|^2+ 2\gamma \bar{w}_{k+\frac{1}{2}}^T(u_k-x_{k+\frac{1}{2}}))}{2\gamma K} \\
 &  \le\tfrac{D_X^2}{\gamma K}+\tfrac{\gamma\sum_{k=0}^{K-1} (2\|\bar{w}_k-\bar{w}_{k+\frac{1}{2}}\|^2+\|\bar{w}_{k+\frac{1}{2}}\|^2)}{2K}
 + \tfrac{\sum_{k=0}^{K-1} \bar{w}_{k+\frac{1}{2}}^T(u_k-x_{k+\frac{1}{2}})}{K}. 
\end{align*}}
By taking supremum over $y \in X$, we obtain the following inequality:
{\begin{align*} G(\bar{x}_{K}) \triangleq 
\sup_{y \in X} F(y)^T(\bar{x}_{K}-y) & \le\tfrac{D_X^2}{\gamma K}+\tfrac{\gamma\sum_{k=0}^{K-1} (2\|\bar{w}_k-\bar{w}_{k+\frac{1}{2}}\|^2+\|\bar{w}_{k+\frac{1}{2}}\|^2)}{2K}
  + \tfrac{\sum_{k=0}^{K-1}  \bar{w}_{k+\frac{1}{2}}^T(u_k-x_{k+\frac{1}{2}})}{K}. 
\end{align*}}
Taking expectations on both sides, leads to the following inequality. 
\begin{align} \notag
\mathbb{E}[G(\bar{x}_{K})]& \leq  {\tfrac{D_X^2}{\gamma K}}+\tfrac{\gamma\sum_{k=0}^{K-1} 2\mathbb{E}[\|\bar{w}_k-\bar{w}_{k+\frac{1}{2}}\|^2]+2\mathbb{E}[\|\bar{w}_{k+\frac{1}{2}}\|^2]}{2K} 
+ \tfrac{\sum_{k=0}^{K-1} \mathbb{E}[\bar{w}_{k+\frac{1}{2}}^T({u_k}-x_{k+\frac{1}{2}})]}{K} \\
	\notag& \leq \tfrac{{2D_X^2}+{\gamma^2}\sum_{k=0}^{K-1} \tfrac{{\nu_1^2(4\|x_k\|^2+6\|x_{k+\frac{1}{2}}\|^2)+10\nu_2^2}}{N_k}}{2\gamma K} 
	\leq \tfrac{{2D_X^2}+{\gamma^2}\sum_{k=0}^{K-1} \tfrac{{\nu_1^2(20D_X^2+20\|x^*\|^2)+10\nu_2^2}}{N_k}}{2\gamma K}\\
	& \leq \tfrac{{2D_X^2}+{\gamma^2 \vvs{M}} ({{\nu_1^2(20D_X^2+20\|x^*\|^2)+10\nu_2^2}})}{2\gamma K}
{=\tfrac{\widehat{C}}{K}},
\end{align}
{by \vvs{defining} $\widehat{C} \triangleq (2D_X^2+\gamma^2\vvs{M} (\nu_1^2(20D_X^2+20\|x^*\|^2)+10\nu_2^2)/2\gamma$}. It follows that $\mathbb{E}[G(\bar{x}_{K})] \leq \mathcal{O}(1/K).$ \\
(b) We \vvs{may prove this result in a fashion similar to that used in} Proposition~\ref{gapSPRG}\vvs{(b)}.
\end{proof}

\vvs{\begin{remark} \em Several aspects of the prior results require further emphasis.

\noindent (a)   We observe that the rate guarantees of $\mathcal{O}(1/K)$ in
terms of the gap function for variance-reduced schemes for monotone stochastic
variational inequality problems matches those obtained by Iusem et
al.~\cite{iusem2017extragradient} but with a lower per-iteration complexity.
This rate was also achieved by more recently by Jalilzadeh and
Shanbhag~\cite{jalilzadeh19proximal}. Notably, the latter scheme leverages an
inexact proximal framework. In fact, proximal-point techniques have been useful in conducting a unified analysis for both gradient and extragradient schemes for deterministic strongly convex-concave saddle point problems as seen by the recent work by Mokhtari et al.~\cite{mokhtari19unified} that proves a linear rate of convergence in terms of solution error.\\

\noindent (b) While much of the techniques in the literature rely on uniform
bounds on the conditional second moments, we allow for state-dependence akin to
that adopted in~\cite{iusem2017extragradient} and do not rely on compactness
for proving a.s. convergence statements. Naturally, rate statements do impose a
compactness assumption. In addition, weak sharpness is only required for
proving a.s. convergence for ({\bf v-SPRG}) but does not find application
elsewhere. \\

\noindent (c) To the best of our knowledge, this is the first available rate for projected reflected gradient methods in stochastic and merely monotone regimes. The prior rate of linear convergence was provided in deterministic and strongly monotone settings~\cite{malitsky2015projected}. We also remain unaware of rate statements for SSE schemes and this appears to be the first rate statements for such schemes.

\end{remark}}
%{\noindent {\bf Remark:} While the statements display the similar rates
%for these three methods, the constants are naturally quite distinct. In
%particular, we note that the Lipschitz constant appears in the bounds
%defining the complexity of (SPRG) and lead to a somewhat poorer bound.
%Yet, as the numerics display, these distinctions are less evident in
%practice suggesting that the bounds are relatively weak. }

\section{Incorporating Random Projections in  ({\bf SPRG}) and ({\bf SSE})}
\vs{In this section, we assume that even a single projection onto the feasible
set $X$ is challenging. We assume that $X$ is given by an intersection of a
collection of closed and convex sets $\{X_i\}_{i \in I}$ where $I$ is a finite
set and consider a variants of ({\bf SPRG}) and ({\bf SSE}) where the projection onto $X$ is replaced by a projection onto a randomly selected set $X_i$.} In Section~\ref{sec:4.1}, we review our main assumptions and any
supporting results and proceed to derive asymptotic and rate guarantees in Sections~\ref{sec:4.2} and ~\ref{sec:4.3} for the random projection variants of ({\bf SPRG}) and ({\bf SSE}), respectively. 
\subsection{Assumptions and Supporting Results}\label{sec:4.1}

To establish the convergence, we need the following additional assumptions on the set $X=\bigcap_{i\in I}X_i$ and random projection $\Pi_{l_k}$.
The following assumption is known as linear regularity and is discussed in \cite{wang2015incremental}. It indicates that this condition is a mild restriction in practice.
\begin{assumption}\label{ita}
There exists a positive scalar $\eta$ such that for any $x\in \mathbb{R}^n$
$$\|x-\Pi_X(x)\|^2 \le \eta\max_{i\in I}\|x-\Pi_{X_i}(x)\|^2,$$
where $I$ is a finite set of indices, $I=\{1,\dots,m\}$.
\end{assumption}
The following assumption requires that each constraint is sampled with at least some probability and the random samples are nearly independent, which refers to \cite{wang2015incremental}.
\begin{assumption}\label{rl}
The random variables $l_k, k=0, 1, \dots,$ are such that
$\inf_{k\ge0}P(l_k=X_i\mid\mathcal{F}_k)\ge\frac{\rho_i}{m}$ with probability $1$ for $i=1, \dots,m,$ where $\rho_i\in(0,1]$ is a scalar for $i = 1, \hdots, m$.
\end{assumption}
The following lemma is essential to our proofs and it leverages basic properties of projection.
\begin{lemma}\label{bd0} \em
Let $X$ be a closed convex subset of $\mathbb{R}^n$. We have
$$\|y-\Pi_X(y)\|^2\le2\|x-\Pi_X(x)\|^2+8\|x-y\|^2, \quad\forall x,y\in\mathbb{R}^n.$$
\end{lemma}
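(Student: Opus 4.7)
The plan is to exploit the optimality of the projection in a straightforward way. Since $\Pi_X(y)$ is by definition the closest point in $X$ to $y$, and $\Pi_X(x)\in X$, we immediately have
\[
\|y-\Pi_X(y)\|\;\le\;\|y-\Pi_X(x)\|.
\]
From here, I would apply the triangle inequality to split $y-\Pi_X(x)=(y-x)+(x-\Pi_X(x))$, obtaining
\[
\|y-\Pi_X(y)\|\;\le\;\|y-x\|+\|x-\Pi_X(x)\|.
\]
Squaring both sides and using the elementary estimate $(a+b)^2\le 2a^2+2b^2$ then yields
\[
\|y-\Pi_X(y)\|^{2}\;\le\;2\|x-\Pi_X(x)\|^{2}+2\|x-y\|^{2},
\]
which is in fact strictly sharper than the claimed bound $2\|x-\Pi_X(x)\|^2+8\|x-y\|^2$, so the lemma follows at once.

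There is essentially no obstacle: the only technical ingredient is the definitional property that $\Pi_X(y)$ minimizes $\|y-z\|$ over $z\in X$, combined with the triangle inequality and a convexity-type inequality. The looser constant $8$ in the statement presumably reflects the fact that the authors only need an inequality of this form and choose not to optimize the coefficient; one could alternatively derive the $8$-version by writing $(a+b)^2\le 2a^2+8b^2$ after a weighted Young inequality with a small parameter if, for some reason, a sharper dependence on $\|x-\Pi_X(x)\|^2$ were desired. Either way the proof fits in a few lines.
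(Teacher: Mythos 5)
Your proof is correct, and it takes a genuinely different route from the paper's. The paper writes $y-\Pi_X(y)=(x-\Pi_X(x))-(x-y)+(\Pi_X(x)-\Pi_X(y))$, applies the triangle inequality to this three-term decomposition, and then invokes the nonexpansiveness of the projection operator to absorb $\|\Pi_X(x)-\Pi_X(y)\|$ into a second copy of $\|x-y\|$; squaring the resulting bound $\|x-\Pi_X(x)\|+2\|x-y\|$ via $(a+b)^2\le 2a^2+2b^2$ is precisely where the constant $8$ arises. You instead observe that $\Pi_X(x)\in X$ is a feasible competitor in the minimization defining $\Pi_X(y)$, so $\|y-\Pi_X(y)\|\le\|y-\Pi_X(x)\|\le\|y-x\|+\|x-\Pi_X(x)\|$, and squaring gives the sharper bound $2\|x-\Pi_X(x)\|^2+2\|x-y\|^2$, which dominates the stated one. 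Your argument is both more elementary (it uses only the definitional minimizing property of the projection rather than its nonexpansiveness) and yields a better constant; the paper's weaker constant $8$ is an artifact of its particular decomposition rather than a necessity, and nothing downstream in the paper depends on the constant being tight.
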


\begin{proof}
Since $y-\Pi_X(y)=(x-\Pi_X(x))-(x-y)+(\Pi_X(x)-\Pi_X(y))$, we have
\begin{align*}
\|y-\Pi_X(y)\| &\le \|x-\Pi_X(x)\|+\|x-y\|+\|\Pi_X(x)-\Pi_X(y)\| \le \|x-\Pi_X(x)\|+2\|x-y\|.
\end{align*}
Thus,
\begin{align*}
\|y-\Pi_X(y)\|^2 &\le 2\|x-\Pi_X(x)\|^2+8\|x-y\|^2,
\end{align*}
where the last inequality leverages $\|a+b\|^2\le2\|a\|^2+2\|b\|^2$. 
\end{proof}
The following lemma provides an inequality useful in deriving lower bound \vvs{for} $\|x_{k+1}-x^*\|^2$.
\begin{lemma}\label{bd} \em
\vvs{Suppose Assumptions \ref{lip-mon} -- \ref{weak-sharp} hold.} Then, we have
$$F(x)^T(x-x^*)\ge \alpha \mbox{dist}\left(\Pi_X(x),X^*\right)-C\mbox{dist}(x,X), \quad \forall x\in\mathbb{R}^n. $$
\end{lemma}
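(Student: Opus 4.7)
The plan is to extend the weak-sharpness inequality (Assumption \ref{weak-sharp}), which only applies to points in $X$, to arbitrary points $x \in \mathbb{R}^n$ by decomposing $x$ via its projection $\Pi_X(x)$. Monotonicity of $F$ will let us replace $F(x)$ by $F(x^*)$ at the cost of a nonnegative term, after which bounding the ``error'' contribution from $x - \Pi_X(x)$ by Assumption \ref{bd5} yields the $-C\,\text{dist}(x,X)$ term.

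More concretely, pick any $x^* \in X^*$ (chosen below to achieve the distance). First, by monotonicity of $F$,
\begin{align*}
F(x)^T(x - x^*) \;\geq\; F(x^*)^T(x - x^*).
\end{align*}
Next, decompose
\begin{align*}
F(x^*)^T(x - x^*) \;=\; F(x^*)^T(\Pi_X(x) - x^*) + F(x^*)^T(x - \Pi_X(x)).
\end{align*}
Since $\Pi_X(x) \in X$, the weak-sharpness property (Assumption \ref{weak-sharp}) applied to $\Pi_X(x)$ yields, after choosing $x^*$ to be the nearest point to $\Pi_X(x)$ in $X^*$,
\begin{align*}
F(x^*)^T(\Pi_X(x) - x^*) \;\geq\; \alpha\,\text{dist}(\Pi_X(x), X^*).
\end{align*}
For the second term, Cauchy--Schwarz together with the bound $\|F(x^*)\| \leq C$ from Assumption \ref{bd5} gives
\begin{align*}
F(x^*)^T(x - \Pi_X(x)) \;\geq\; -\|F(x^*)\|\,\|x - \Pi_X(x)\| \;\geq\; -C\,\text{dist}(x, X).
\end{align*}
Summing these two estimates and combining with the monotonicity bound yields the claim.

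The step I expect to require the most care is the choice of $x^* \in X^*$: weak sharpness is stated as ``there exists $\alpha > 0$ such that for all $x \in X$, $(x - x^*)^T F(x^*) \geq \alpha\,\text{dist}(x, X^*)$,'' and to deploy it meaningfully I need $x^*$ to be (or to be free to choose as) the projection of $\Pi_X(x)$ onto $X^*$; if instead the assumption is read as ``for all $x \in X$ and all $x^* \in X^*$,'' then the inequality is immediate for any $x^*$ and, in particular, for the one that realizes $\text{dist}(\Pi_X(x), X^*)$. Either reading delivers the required bound, but I would briefly note this choice to keep the argument unambiguous. No further delicate estimates are needed; the remaining arithmetic is routine.
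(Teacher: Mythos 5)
Your proof is correct and follows essentially the same route as the paper's: the same three-way decomposition of $F(x)^T(x-x^*)$ into a monotonicity term, a weak-sharpness term at $\Pi_X(x)\in X$, and a Cauchy--Schwarz term bounded via $\|F(x^*)\|\le C$. Your closing remark on which $x^*\in X^*$ the weak-sharpness inequality refers to is a reasonable point of care, but it does not alter the argument.
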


\begin{proof}
We have
\begin{align}
F(x)^T(x-x^*)=(F(x)-F(x^*))^T(x-x^*)+F(x^*)^T(\Pi_X(x)-x^*)+F(x^*)^T(x-\Pi_X(x)). \label{bd01}
\end{align}
From the monotonicity assumption on $F$, we have
\begin{align}
(F(x)-F(x^*))^T(x-x^*) \ge 0. \label{bd02}
\end{align}
Since $x^*$ is a solution, it follows that from the weak sharpness property,
\begin{align}
F(x^*)^T(\Pi_X(x)-x^*) \ge \alpha\mbox{dist}\left(\Pi_X(x),X^*\right). \label{bd03}
\end{align}
Finally, \vvs{by recalling that}  $F(x^*)^T(\Pi_X(x)-x) \le \|F(x^*)\|\|x-\Pi_X(x)\|$ and $\|F(x^*)\| \le C$ (by Assumption \ref{bd5}), \vvs{we have that}
\begin{align}
F(x^*)^T(x-\Pi_X(x)) \ge -\|F(x^*)\|\|x-\Pi_X(x)\| \ge -C\mbox{dist}(x,X). \label{bd04}
\end{align}
By substituting \eqref{bd02} -- \eqref{bd04} in \eqref{bd01}, the result follows.
\end{proof}
Next, we provide a simple bound on $F(x)$. 
\begin{lemma}\label{lu} \em
Suppose Assumptions \ref{lip-mon} -- \ref{bd5} hold. Then for any $x \in \Real^n$,
$\|F(x)\|^2 \le 2L^2\|x-x^*\|^2+2C^2.$
\end{lemma}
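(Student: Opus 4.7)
The plan is to exploit the decomposition $F(x) = (F(x)-F(x^*)) + F(x^*)$ for any fixed $x^* \in X^*$, combined with the elementary inequality $\|a+b\|^2 \le 2\|a\|^2 + 2\|b\|^2$. Since Assumption~\ref{bd5} guarantees that $X^*$ is nonempty, such an $x^*$ exists, and we may fix one for the purposes of the argument.

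The steps proceed as follows. First, for an arbitrary $x \in \mathbb{R}^n$, write
\begin{align*}
\|F(x)\|^2 = \|F(x)-F(x^*) + F(x^*)\|^2 \le 2\|F(x)-F(x^*)\|^2 + 2\|F(x^*)\|^2.
\end{align*}
Second, bound the first term on the right-hand side by invoking the $L$-Lipschitz continuity of $F$ from Assumption~\ref{lip-mon}, giving $\|F(x)-F(x^*)\|^2 \le L^2 \|x-x^*\|^2$. Third, bound the second term by applying Assumption~\ref{bd5}, which yields $\|F(x^*)\| \le C$, so $\|F(x^*)\|^2 \le C^2$. Combining these two bounds delivers $\|F(x)\|^2 \le 2L^2\|x-x^*\|^2 + 2C^2$, as desired.

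There is essentially no obstacle here; the result is a one-line consequence of the triangle-type inequality together with Lipschitz continuity and the uniform bound on $F$ over $X^*$. The only thing worth remarking is that the conclusion holds for \emph{any} fixed $x^* \in X^*$, so the reader may later specialize to any convenient solution (for instance, the one that minimizes $\|x-x^*\|$ when this bound is combined with other estimates in the rate analysis for the random projection schemes).
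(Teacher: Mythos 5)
Your proof is correct, and it is the standard argument the authors clearly intend (the paper in fact states Lemma~\ref{lu} without supplying a proof): decompose $F(x) = (F(x)-F(x^*)) + F(x^*)$, apply $\|a+b\|^2 \le 2\|a\|^2+2\|b\|^2$, and invoke the $L$-Lipschitz continuity from Assumption~\ref{lip-mon} together with the bound $\|F(x^*)\|\le C$ from Assumption~\ref{bd5}. Nothing is missing.
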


\vvs{Finally, we derive a lower bound on $\mathbb{E}[\|x_k - \Pi_{l_k}(x_k)\|^2 \mid \Fscr_k]$.}
%\begin{proof}
%The result follows by using the triangle inequality 
%$\|F(x)\| \le \|F(x)-F(x^*)\|+\|F(x^*)\|.$
%\end{proof}

\begin{lemma}\label{bd20} \em
Suppose Assumptions~\ref{ita} and \ref{rl} hold. Then for any $l_k \in I$ and any $x \in \Real^n$,
$$\mathbb{E}[\|x-\Pi_{l_k}(x)\|^2\mid\mathcal{F}_k] \ge \tfrac{\rho}{m\eta}\mbox{dist}^2(x,X),\quad  k\ge 0,$$
with probability 1, where $\rho\triangleq\min_{i\in I}\{\rho_i\}$.
\end{lemma}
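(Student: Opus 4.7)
The plan is to expand the conditional expectation as a sum over the $m$ possible index values of $l_k$, then apply the lower bound on the sampling probabilities from Assumption~\ref{rl}, and finally invoke the linear regularity from Assumption~\ref{ita} to convert the bound in terms of per-set projections into one involving $\mathrm{dist}(x,X)$.

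More concretely, first I would write
\[
\mathbb{E}[\|x-\Pi_{l_k}(x)\|^2 \mid \mathcal{F}_k]
\;=\; \sum_{i=1}^{m} P(l_k = X_i \mid \mathcal{F}_k)\,\|x-\Pi_{X_i}(x)\|^2,
\]
which uses nothing more than the definition of conditional expectation (noting that $x$ is $\mathcal{F}_k$-measurable since the bound will be applied at $x = x_k$). By Assumption~\ref{rl}, $P(l_k = X_i \mid \mathcal{F}_k) \geq \rho_i/m \geq \rho/m$ almost surely for every $i \in I$, where $\rho \triangleq \min_{i \in I} \rho_i$. Since every term in the sum is nonnegative, I can lower bound the sum by a single term; in particular,
\[
\sum_{i=1}^{m} P(l_k = X_i \mid \mathcal{F}_k)\,\|x-\Pi_{X_i}(x)\|^2
\;\geq\; \tfrac{\rho}{m}\,\max_{i \in I}\|x-\Pi_{X_i}(x)\|^2,
\]
by simply picking the index $i^\star$ that attains the maximum on the right.

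Finally, by Assumption~\ref{ita} (linear regularity), $\|x - \Pi_X(x)\|^2 \leq \eta \max_{i \in I} \|x-\Pi_{X_i}(x)\|^2$, which rearranges to $\max_{i \in I}\|x-\Pi_{X_i}(x)\|^2 \geq \tfrac{1}{\eta}\|x-\Pi_X(x)\|^2 = \tfrac{1}{\eta}\,\mathrm{dist}^2(x,X)$. Chaining this with the previous display yields
\[
\mathbb{E}[\|x-\Pi_{l_k}(x)\|^2 \mid \mathcal{F}_k] \;\geq\; \tfrac{\rho}{m\eta}\,\mathrm{dist}^2(x,X),
\]
as desired. There is no real obstacle here; the proof is just a two-line chain of inequalities. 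The only minor point to be careful about is the almost-sure qualifier on the conditional probability bound from Assumption~\ref{rl}, which carries through the chain so that the concluding inequality holds with probability one.
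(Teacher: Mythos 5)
Your proof is correct and follows essentially the same route as the paper's: expand the conditional expectation over the $m$ index values, use Assumption~\ref{rl} to lower bound each probability by $\rho/m$ and retain the term attaining $\max_{i\in I}\|x-\Pi_{X_i}(x)\|^2$, then apply the linear regularity of Assumption~\ref{ita}. Your remark about the almost-sure qualifier carrying through is a fine point that the paper leaves implicit; nothing further is needed.
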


\begin{proof}
Following from Assumption \ref{rl}, we have
\begin{align*}
\mathbb{E}[\|x-\Pi_{l_k}(x)\|^2\mid\mathcal{F}_k]& =\sum_{i=1}^mP(l_k=i\mid\mathcal{F}_k)\|x-\Pi_i(x)\|^2 \ge \tfrac{\rho}{m}\|x-\Pi_j(x)\|^2, \quad \forall j=1,\dots,m \\
\implies \mathbb{E}[\|x-\Pi_{l_k}(x)\|^2\mid\mathcal{F}_k] &  \ge \tfrac{\rho}{m}\max_{j}\|x-\Pi_j(x)\|^2 \overset{\tiny (\mbox{Ass.}~\ref{ita})}{\ge} \tfrac{\rho}{m\eta}\mbox{dist}^2(x,X).
\end{align*}
\end{proof}

\subsection{{\bf SPRG} with random projections}\label{sec:4.2}
We begin with an a.s. convergence claim for ({\bf r-SPRG}). 

\begin{theorem}\label{rpml}
\em
Let Assumptions \ssc{\ref{lip-mon} -- \ref{rl}} hold. {Suppose the steplength sequence $\{\gamma_k\}$ satisfies $\sum_{k=0}^{\infty} \gamma_k = \infty$ and $\sum_{k=0}^{\infty} \gamma^2_k < \infty$.} Then any sequence generated by {\bf (r-SPRG)}, where the projections are randomly generated, converges to a solution $x^*\in X^*$ in an a.s. sense.
\end{theorem}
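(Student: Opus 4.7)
The proof follows the general template of Theorem~\ref{egml} for (\textbf{v-SPRG}), with two new difficulties to absorb. First, because the projection is onto a random subset $X_{l_k}$ rather than $X$, the iterates are generally infeasible and the infeasibility gap $\mbox{dist}(x_k,X)$ must be driven to zero. Second, each iteration uses only a single stochastic sample, so rather than variance reduction via batching, I rely on the summability $\sum_k\gamma_k^2<\infty$ to suppress the stochastic noise and the Lipschitz-error terms. Throughout I write $y_k\triangleq 2x_k-x_{k-1}$.

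I would begin by invoking Lemma~\ref{project}(ii) with set $X_{l_k}$ and point $x^*\in X\subseteq X_{l_k}$ to obtain
\begin{align*}
\|x_{k+1}-x^*\|^2 \le \|x_k-x^*\|^2 - \|x_k-x_{k+1}\|^2 - 2\gamma_k F(y_k,\omega_k)^T(x_{k+1}-x^*).
\end{align*}
Writing $F(y_k,\omega_k)=F(y_k)+w_k$, decomposing $x_{k+1}-x^*=(x_{k+1}-y_k)+(y_k-x^*)$, and using monotonicity together with Lemma~\ref{bd} to lower bound $F(y_k)^T(y_k-x^*)$ yields the favorable contraction $-2\gamma_k\alpha\,\mbox{dist}(\Pi_X(y_k),X^*)$ together with a positive residual $+2\gamma_k C\,\mbox{dist}(y_k,X)$. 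The Lipschitz cross term $2\gamma_k F(y_k)^T(y_k-x_{k+1})$ is controlled by a small-coefficient Young's inequality, $2\gamma_k F(y_k)^T(y_k-x_{k+1}) \le 4\gamma_k^2\|F(y_k)\|^2+\tfrac{1}{4}\|y_k-x_{k+1}\|^2$, and the expansion $\|y_k-x_{k+1}\|^2\le 2\|x_k-x_{k-1}\|^2+2\|x_k-x_{k+1}\|^2$ ensures that the $\tfrac{1}{2}\|x_k-x_{k+1}\|^2$ piece is absorbed by the original $-\|x_k-x_{k+1}\|^2$, leaving a net $-\tfrac{1}{2}\|x_k-x_{k+1}\|^2$ in the recursion. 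The residual $2\gamma_k C\,\mbox{dist}(y_k,X)$ is bounded through Lemma~\ref{bd0} (applied to $y_k$ against $x_{k-1}$) and Young's inequality into $O(\gamma_k^2)+\tfrac{\rho}{8m\eta}\mbox{dist}^2(x_{k-1},X)$. Next, conditioning on $l_k$ and applying Lemma~\ref{bd20} together with Lemma~\ref{bd0} to the \emph{retained} negative term $-\|x_k-\gamma_k F(y_k,\omega_k)-x_{k+1}\|^2$ supplies
\begin{align*}
-\mathbb{E}\bigl[\|x_k-\gamma_k F(y_k,\omega_k)-x_{k+1}\|^2\bigm|\mathcal{F}_k\bigr] \le -\tfrac{\rho}{2m\eta}\,\mbox{dist}^2(x_k,X) + O(\gamma_k^2)\,\mathbb{E}[\|F(y_k,\omega_k)\|^2\mid\mathcal{F}_k].
\end{align*}
Conditioning on $\omega_k$ kills the cross terms by Assumption~\ref{moment}, while $\mathbb{E}[\|w_k\|^2\mid\mathcal{F}_k]$ and $\|F(y_k)\|^2$ (via Lemma~\ref{lu}) are bounded by $O(\|x_k-x^*\|^2+\|x_{k-1}-x^*\|^2+1)$. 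Collecting everything with the augmented Lyapunov function $V_k\triangleq\|x_k-x^*\|^2+\beta\|x_k-x_{k-1}\|^2$, where $\beta>0$ is chosen so that $\beta\|x_k-x_{k-1}\|^2$ absorbs the $\|x_k-x_{k-1}\|^2$ contribution from the Lipschitz bound, produces a Robbins--Siegmund recursion
\begin{align*}
\mathbb{E}[V_{k+1}\mid\mathcal{F}_k] \le (1+u_k)V_k + \psi_k - 2\gamma_k\alpha\,\mbox{dist}(\Pi_X(y_k),X^*) - \tfrac{\rho}{4m\eta}\,\mbox{dist}^2(x_k,X) - \tfrac{1}{2}\|x_k-x_{k+1}\|^2,
\end{align*}
with $u_k,\psi_k=O(\gamma_k^2)$ summable.

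Lemma~\ref{robbins} then gives $V_k\to\bar V$ a.s.\ (so $\{x_k\}$ is bounded a.s.), $\sum_k\|x_{k+1}-x_k\|^2<\infty$ a.s., $\sum_k\mbox{dist}^2(x_k,X)<\infty$ a.s., and $\sum_k\gamma_k\,\mbox{dist}(\Pi_X(y_k),X^*)<\infty$ a.s. The first two give $x_{k+1}-x_k\to 0$ and $\mbox{dist}(x_k,X)\to 0$ a.s., whence $y_k-x_k=x_k-x_{k-1}\to 0$ a.s. Since $\sum_k\gamma_k=\infty$, there is a subsequence along which $\mbox{dist}(\Pi_X(y_k),X^*)\to 0$; combined with $\mbox{dist}(x_k,X)\to 0$ and $y_k-x_k\to 0$, we extract an a.s.\ limit point $x_1^*\in X^*$. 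Because the recursion holds for every $x^*\in X^*$, the sequence $\{\|x_k-x_1^*\|\}$ is convergent a.s., and subsequential convergence to zero then forces the whole sequence $\{x_k\}$ to converge a.s.\ to $x_1^*\in X^*$. The main technical obstacle is the joint balancing of the two error sources: the positive residual $2\gamma_k C\,\mbox{dist}(y_k,X)$ generated by Lemma~\ref{bd} (which appears precisely because $y_k\notin X$) must be prevented from overwhelming the negative $\mbox{dist}^2(x_k,X)$ contraction supplied by Lemma~\ref{bd20}, and the Lipschitz coupling through $y_k$ forces the inclusion of $\|x_k-x_{k-1}\|^2$ in the Lyapunov function. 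Both are resolved by exploiting $\sum_k\gamma_k^2<\infty$ in concert with the state-dependent noise bound of Assumption~\ref{moment}.
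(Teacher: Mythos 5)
Your proposal follows the same architecture as the paper's proof: the one-step projection inequality, Lemma~\ref{bd} to introduce the weak-sharpness contraction $-2\gamma_k\alpha\,\mbox{dist}(\Pi_X(y_k),X^*)$ together with the infeasibility residual $2\gamma_k C\,\mbox{dist}(y_k,X)$, Lemma~\ref{bd20} to manufacture a negative squared-distance term, a Lyapunov function carrying $\|x_k-x_{k-1}\|^2$, Robbins--Siegmund, the $\sum_k\gamma_k=\infty$ argument to force $\mbox{dist}(x_k,X^*)\to 0$ subsequentially, and the standard "convergent sequence with a subsequence tending to zero" closing step. Two of your tactical choices, however, do not close as written. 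First, you cannot simultaneously retain $-\|x_k-x_{k+1}\|^2$ (which you use to absorb the $\tfrac{1}{2}\|x_k-x_{k+1}\|^2$ produced by Young's inequality) and the unexpanded residual $-\|x_k-\gamma_kF(y_k,\omega_k)-x_{k+1}\|^2$ (which you feed into Lemmas~\ref{bd0} and~\ref{bd20}): Lemma~\ref{project}(ii) yields one or the other, and your displayed inequality has already converted the latter into the former. This is repairable by splitting the residual at the cost of an extra $O(\gamma_k^2)\|F(y_k,\omega_k)\|^2$, but the paper sidesteps it by extracting the contraction from the retained $-\tfrac{1}{16}\|y_k-x_{k+1}\|^2$ term instead, using $x_{k+1}\in X_{l_k}$ so that $\|y_k-x_{k+1}\|\ge\|y_k-\Pi_{l_k}(y_k)\|$ and then Lemma~\ref{bd20}, which lands the negative term on $\mbox{dist}^2(y_k,X)$.

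The second issue is the more consequential one: after your use of Lemma~\ref{bd0}, the positive residual sits on $\mbox{dist}^2(x_{k-1},X)$ while your negative contraction sits on $\mbox{dist}^2(x_k,X)$, and your Lyapunov function $V_k=\|x_k-x^*\|^2+\beta\|x_k-x_{k-1}\|^2$ contains no distance component to bridge this one-index shift; you would need to augment $V_k$ with a $\lambda\,\mbox{dist}^2(x_{k-1},X)$ term and re-balance the constants (the conversion via Lemma~\ref{bd0} also emits a $\|y_k-x_{k-1}\|^2=4\|x_k-x_{k-1}\|^2$ term with an $\mathcal{O}(1)$ coefficient that must be absorbed). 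The paper avoids the mismatch entirely by keeping both the linear residual and the quadratic contraction in the \emph{same} variable $\mbox{dist}(y_k,X)$ and completing the square, $2\gamma_k(C+\alpha)d-\tfrac{\rho}{16m\eta}d^2\le\tfrac{16m\eta(C+\alpha)^2}{\rho}\gamma_k^2$, which deposits only a summable $O(\gamma_k^2)$ error. Finally, "conditioning on $\omega_k$ kills the cross terms" is too quick: $x_{k+1}$ is correlated with $w_k$, so you must first decompose $w_k^T(x_{k+1}-x^*)=w_k^T(x_{k+1}-y_k)+w_k^T(y_k-x^*)$, bound the first piece by Young's inequality, and only then use $\mathbb{E}[w_k\mid\mathcal{F}_k]=0$ on the second, as the paper does. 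None of these is a fatal flaw, but each requires an explicit repair before the Robbins--Siegmund recursion is actually established.
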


\begin{proof}
Define $y_k=2x_k-x_{k-1}$ for all $k\ge1$ {and $w_k=F(y_k,\omega_k)-F(y_k)$}.
By Lemma \ref{project}(ii) and by noting that
$x_{k+1}=\Pi_{\ssc{l_k}}(x_k-\gamma_kF(2x_k-x_{k-1}))$ and
$F(\ssc{y_k},\omega_k) = F(\ssc{y_k}) + w_k$, we have the following inequality:
\begin{align}
\notag \|x_{k+1}-x^* \|^2 &\le \|x_k-\gamma_kF(y_k,\omega_k)-x^*\|^2-\|x_k-\gamma_kF(y_k,\omega_k)-x_{k+1}\|^2 \\ 
\notag &= \|x_k-x^* \|^2-\|x_{k+1}-x_k \|^2-2\gamma_k(F(y_k)+w_k)^T(x_{k+1}-x^*) \\
&= \|x_k-x^* \|^2-\|x_{k+1}-x_k \|^2-2\gamma_kF(y_k)^T(x_{k+1}-x^*)-2\gamma_kw_k^T(x_{k+1}-x^*). \label{ml-eq1}
\end{align} 
{Recall that $\|y_k-x_{k+1}\|^2$ can be expressed as follows.}
{\begin{align*}
\|y_k-x_{k+1}\|^2  
& = \|(2x_k-x_{k-1})-x_{k+1}\|^2  \\
& = \|(x_k - (x_{k-1}-x_k)-x_{k+1})\|^2 \\
& = 2(x_k-x_{k-1})^T(x_k-x_{k+1})+\|x_{k-1}-x_k\|^2 +\|x_k- x_{k+1}\|^2 \\
& = 2(x_k-x_{k-1})^T(2x_k-2x_{k+1})+\|x_{k-1}-x_{k+1}\|^2 \\
& = 2(x_k-x_{k-1})^T(x_k+x_{k-1}-2x_{k+1})+2\|x_k-x_{k-1}\|^2+ \|x_{k-1}-x_{k+1}\|^2 \\ 
& = 2\|x_k-x_{k+1}\|^2-2\|x_{k-1}-x_{k+1}\|^2+2\|x_k-x_{k-1}\|^2+ \|x_{k-1}-x_{k+1}\|^2 \\
& = 2\|x_k-x_{k+1}\|^2-\|x_{k-1}-x_{k+1}\|^2+2\|x_k-x_{k-1}\|^2. 
\end{align*}}
Consequently, we  have that
\begin{align}
\tfrac{1}{4}\|x_k-x_{k+1}\|^2=\tfrac{1}{8}\|y_k-x_{k+1}\|^2+\tfrac{1}{8}\|x_{k-1}-x_{k+1}\|^2-\tfrac{1}{4}\|x_k-x_{k-1}\|^2. \label{ml-eq2}
\end{align}
Using \eqref{ml-eq2} in \eqref{ml-eq1}, we obtain
\begin{align}
\notag &\|x_{k+1}-x^* \|^2 \le \|x_k-x^* \|^2-\tfrac{3}{4}\|x_{k+1}-x_k \|^2-\tfrac{1}{8}\|y_k-x_{k+1}\|^2-\tfrac{1}{8}\|x_{k-1}-x_{k+1}\|^2 \\
\notag&+\tfrac{1}{4}\|x_k-x_{k-1}\|^2-2\gamma_kF(y_k)^T(x_{k+1}-x^*)-2\gamma_kw_k^T(x_{k+1}-x^*) \\
\notag&=\|x_k-x^* \|^2-\tfrac{3}{4}\|x_{k+1}-x_k \|^2-\tfrac{1}{8}\|y_k-x_{k+1}\|^2-\tfrac{1}{8}\|x_{k-1}-x_{k+1}\|^2 \\
&+\tfrac{1}{4}\|x_k-x_{k-1}\|^2-2\gamma_kF(y_k)^T(y_k-x^*)-2\gamma_kF(y_k)^T(x_{k+1}-y_k)-2\gamma_kw_k^T(x_{k+1}-x^*)\label{ml-eq2i} \\
\notag&\le\|x_k-x^* \|^2-\tfrac{3}{4}\|x_{k+1}-x_k \|^2-\tfrac{1}{8}\|y_k-x_{k+1}\|^2-\tfrac{1}{8}\|x_{k-1}-x_{k+1}\|^2+\tfrac{1}{4}\|x_k-x_{k-1}\|^2 \\
&-2\gamma_k\alpha\mbox{dist}\left(\Pi_X(y_k),X^*\right)+2\gamma_kC\mbox{dist}(y_k,X)-2\gamma_kF(y_k)^T(x_{k+1}-y_k)-2\gamma_kw_k^T(x_{k+1}-x^*) \label{ml-eq3},
\end{align}
where the last inequality follows from Lemma \ref{bd}. Since 
\begin{align*}
-2\gamma_kF(y_k)^T(x_{k+1}-y_k)\le16\gamma_k^2\|F(y_k)\|^2+\tfrac{1}{16}\|x_{k+1}-y_k\|^2,
\end{align*}
{ inequality \eqref{ml-eq3} can be rewritten as follows:}
\begin{align}
\notag &\|x_{k+1}-x^* \|^2\le\|x_k-x^* \|^2-\tfrac{3}{4}\|x_{k+1}-x_k \|^2-\tfrac{1}{16}\|y_k-x_{k+1}\|^2-\tfrac{1}{8}\|x_{k-1}-x_{k+1}\|^2+\tfrac{1}{4}\|x_k-x_{k-1}\|^2 \\
\notag&-2\gamma_k\alpha\mbox{dist}\left(\Pi_X(y_k),X^*\right)+2\gamma_kC\mbox{dist}(y_k,X)+16\gamma_k^2\|F(y_k)\|^2+16\gamma_k^2\|w_k\|^2-2\gamma_kw_k^T(y_k-x^*) \\
\notag&\le\|x_k-x^* \|^2-\tfrac{3}{4}\|x_{k+1}-x_k \|^2-\tfrac{1}{16}\|y_k-x_{k+1}\|^2-\tfrac{1}{8}\|x_{k-1}-x_{k+1}\|^2+\tfrac{1}{4}\|x_k-x_{k-1}\|^2 \\
\notag&-2\gamma_k\alpha\mbox{dist}\left(\Pi_X(y_k),X^*\right)+2\gamma_kC\mbox{dist}(y_k,X)+32\gamma_k^2L^2\|y_k-x^*\|^2 \\
&+32\gamma_k^2C^2+16\gamma_k^2\|w_k\|^2-2\gamma_kw_k^T(y_k-x^*). \label{ml-eq3i}
\end{align}
Since 
\begin{align*}
-2\gamma_k\alpha\mbox{dist}\left(\Pi_X(y_k),X^*\right) &\le -2\gamma_k\alpha\mbox{dist}\left(x_k,X^*\right)+2\gamma_k\alpha\|x_k-\Pi_X(y_k)\| \\
&\le -2\gamma_k\alpha\mbox{dist}\left(x_k,X^*\right)+2\gamma_k\alpha\|x_k-y_k\|+2\gamma_k\alpha\|y_k-\Pi_X(y_k)\| \\
&= -2\gamma_k\alpha\mbox{dist}\left(x_k,X^*\right)+2\gamma_k\alpha\|x_k-y_k\|+2\gamma_k\alpha \mbox{dist}(y_k,X),
\end{align*}
we have
\begin{align}
& \notag \|x_{k+1}-x^* \|^2\le\|x_k-x^* \|^2-2\gamma_k\alpha\mbox{dist}\left(x_k,X^*\right)-\tfrac{3}{4}\|x_{k+1}-x_k \|^2-\tfrac{1}{16}\|y_k-x_{k+1}\|^2 \\
\notag&-\tfrac{1}{8}\|x_{k-1}-x_{k+1}\|^2+\tfrac{1}{4}\|x_k-x_{k-1}\|^2+2\gamma_k\alpha\|x_k-y_k\|+2\gamma_k(C+\alpha)\mbox{dist}(y_k,X) \\
&+64\gamma_k^2L^2\|x_k-x^*\|^2+64\gamma_k^2L^2\|x_k-x_{k-1}\|^2+32\gamma_k^2C^2+16\gamma_k^2\|w_k\|^2-2\gamma_kw_k^T(y_k-x^*). \label{ml-eq5}
\end{align}
By Lemma \ref{bd20},
\begin{align}
\mathbb{E}[\|y_k-x_{k+1}\|^2\mid\mathcal{F}_{k}] &\ge \mathbb{E}[\|y_k-\Pi_{l_k}y_k\|^2\mid\mathcal{F}_{k}] \ge \tfrac{\rho}{m\eta}\mbox{dist}^2(y_k,X). \label{ml-eq6}
\end{align}
Taking expectations conditioned on $\mathcal{F}_{k}$ and using \eqref{ml-eq6} in \eqref{ml-eq5}, we have
\begin{align}
\notag&\mathbb{E}[\|x_{k+1}-x^*\|^2+\tfrac{3}{4}\|x_{k+1}-x_k\|^2\mid\mathcal{F}_{k}] \le \|x_k-x^* \|^2-2\gamma_k\alpha\mbox{dist}\left(x_k,X^*\right)-\tfrac{1}{16}\mathbb{E}[\|y_k-x_{k+1}\|^2\mid\mathcal{F}_{k}] \\
\notag&-\tfrac{1}{8}\mathbb{E}[\|x_{k-1}-x_{k+1}\|^2\mid\mathcal{F}_{k}]+\tfrac{1}{4}\|x_k-x_{k-1}\|^2+2\gamma_k\alpha\|x_k-y_k\|+2\gamma_k(C+\alpha)\mbox{dist}(y_k,X) \\
\notag&+64\gamma_k^2L^2\|x_k-x^*\|^2+64\gamma_k^2L^2\|x_k-x_{k-1}\|^2+32\gamma_k^2C^2+16\gamma_k^2\mathbb{E}[\|w_k\|^2\mid\mathcal{F}_{k}] \\
\notag&\le \|x_k-x^* \|^2-2\gamma_k\alpha\mbox{dist}\left(x_k,X^*\right)-\tfrac{1}{16}\tfrac{\rho}{m\eta}\mbox{dist}^2(y_k,X)+\tfrac{1}{4}\|x_k-x_{k-1}\|^2+2\gamma_k\alpha\|x_k-y_k\| \\
\notag&+2\gamma_k(C+\alpha)\mbox{dist}(y_k,X)+64\gamma_k^2L^2\|x_k-x^*\|^2+64\gamma_k^2L^2\|x_k-x_{k-1}\|^2+32\gamma_k^2C^2\\\notag&+{16\gamma_k^2(\nu_1^2\|y_k\|^2+\nu_2^2)} \\
\notag&\le \|x_k-x^* \|^2-2\gamma_k\alpha\mbox{dist}\left(x_k,X^*\right)-\tfrac{1}{16}\tfrac{\rho}{m\eta}\mbox{dist}^2(y_k,X)+\tfrac{1}{4}\|x_k-x_{k-1}\|^2+2\gamma_k\alpha\|x_k-y_k\| \\
\notag&+2\gamma_k(C+\alpha)\mbox{dist}(y_k,X)+64\gamma_k^2L^2\|x_k-x^*\|^2+64\gamma_k^2L^2\|x_k-x_{k-1}\|^2+32\gamma_k^2C^2\\\notag&+{16\gamma_k^2(2\nu_1^2\|y_k-x_k\|^2+2\nu_1^2\|x_k\|^2+\nu_2^2)} \\
\notag&\le \|x_k-x^* \|^2-2\gamma_k\alpha\mbox{dist}\left(x_k,X^*\right)-\tfrac{1}{16}\tfrac{\rho}{m\eta}\mbox{dist}^2(y_k,X)+\tfrac{1}{4}\|x_k-x_{k-1}\|^2+2\gamma_k\alpha\|x_k-y_k\| \\
\notag&+2\gamma_k(C+\alpha)\mbox{dist}(y_k,X)+64\gamma_k^2L^2\|x_k-x^*\|^2+64\gamma_k^2L^2\|x_k-x_{k-1}\|^2+32\gamma_k^2C^2\\\notag&+{16\gamma_k^2(2\nu_1^2\|y_k-x_k\|^2+4\nu_1^2\|x_k-x^*\|^2+4\nu_1^2\|x^*\|^2+\nu_2^2)} \\
\notag&= \|x_k-x^* \|^2+\tfrac{3}{4}\|x_k-x_{k-1}\|^2-2\gamma_k\alpha\mbox{dist}\left(x_k,X^*\right)-\tfrac{1}{2}\|x_k-x_{k-1}\|^2+2\gamma_k\alpha\|x_k-x_{k-1}\| \\
\notag&+2\gamma_k(C+\alpha)\mbox{dist}(y_k,X)-\tfrac{1}{16}\tfrac{\rho}{m\eta}\mbox{dist}^2(y_k,X)+64\gamma_k^2(L^2+{\nu_1^2})\|x_k-x^*\|^2\\&+{32\gamma_k^2(2L^2+\nu_1^2)}\|x_k-x_{k-1}\|^2+32\gamma_k^2C^2+16\gamma_k^2{(4\nu_1^2\|x^*\|^2+\nu_2^2)}. \label{ml-eq61}
\end{align}
{Noting that $-\frac{1}{2}\|x_k-x_{k-1}\|^2+2\gamma_k\alpha\|x_k-x_{k-1}\|=-\frac{1}{2} \vs{(\|x_k-x_{k-1}\|}-2\gamma_k\alpha)^2+2\gamma_k^2\alpha^2$ and inserting it in \eqref{ml-eq61}}, we have
\begin{align}
\notag&\mathbb{E}[\|x_{k+1}-x^*\|^2+\tfrac{3}{4}\|x_{k+1}-x_k\|^2\mid\mathcal{F}_{k}]\le\|x_k-x^* \|^2+\tfrac{3}{4}\|x_k-x_{k-1}\|^2-2\gamma_k\alpha\mbox{dist}\left(x_k,X^*\right)\\\notag&-\tfrac{1}{2}{(\|x_k-x_{k-1}\|-2\gamma_k\alpha)^2}+2\gamma_k^2\alpha^2-\tfrac{\rho}{16m\eta}\left(\mbox{dist}(y_k,X)-\tfrac{16m\eta\gamma_k(C+\alpha)}{\rho}\right)^2+\tfrac{16m\eta (C+\alpha)^2}{\rho}\gamma_k^2 \\
\notag&+64\gamma_k^2(L^2+{\nu_1^2})\|x_k-x^*\|^2+{32\gamma_k^2(2L^2+\nu_1^2)}\|x_k-x_{k-1}\|^2+32\gamma_k^2C^2+16\gamma_k^2{(4\nu_1^2\|x^*\|^2+\nu_2^2)} \\
\notag&\le(1+86\gamma_k^2L^2{+64\gamma_k^2\nu_1^2})\left(\|x_k-x^* \|^2+\tfrac{3}{4}\|x_k-x_{k-1}\|^2\right)\\
  \notag& -\underbrace{\left(\tfrac{1}{2}{(\|x_k-x_{k-1}\|-2\gamma_k\alpha)^2}+2\gamma_k\alpha\mbox{dist}\left(x_k,X^*\right)+\tfrac{\rho}{16m\eta}\left(\mbox{dist}(y_k,X)-\tfrac{16m\eta\gamma_k(C+\alpha)}{\rho}\right)^2\right)}_{\beta_k} \\
&+\underbrace{\left(2\gamma_k^2\alpha^2+\tfrac{16m\eta (C+\alpha)^2}{\rho}\gamma_k^2+32\gamma_k^2C^2+16\gamma_k^2{(4\nu_1^2\|x^*\|^2+\nu_2^2)}\right)}_{\eta_k}. \label{ml-eq7}
\end{align}
In effect, we obtain the following recursion:
\begin{align*}
\mathbb{E}[v_{k+1} \mid \mathcal{F}_k] \leq (1+u_k) v_k - \beta_k+ \eta_k, \quad \avs{\mbox{a.s.}} 
\end{align*}
where $v_k \triangleq \left(\|x_k-x^* \|^2+\tfrac{3}{4}\|x_k-x_{k-1}\|^2\right)$
and $u_k = 86 \gamma_k^2L^2{+64\gamma_k^2\nu_1^2}$. Since $\sum{\gamma_k^2}<\infty$, it follows that
$u_k$ and $\eta_k$ are summable. We may then invoke Lemma \ref{robbins} and
it follows that \vs{with probability \avs{one}, the random sequence} $\{\|x_k-x^*
\|^2+\tfrac{3}{4}\|x_k-x_{k-1}\|^2\}$ is convergent \avs{a.s.} and
$\sum\{\frac{1}{2}\|x_k-x_{k-1}-2\gamma_k\alpha\|^2+2\gamma_k\alpha\mbox{dist}\left(x_k,X^*\right)\}<\infty$
\vs{with probability \avs{one}}. We have that $\sum_{k}
\frac{1}{2}\|x_k-x_{k-1}-2\gamma_k\alpha\|^2 < \infty$ \avs{a.s.} implying that
$\|x_k-x_{k-1}-2\gamma_k\alpha\| \to 0$ in a.s. sense. It follows that $\|y_k -
x_k - 2\gamma_k \alpha\| \to 0$ almost surely. Since $\gamma_k \to 0$, it follows that
{$x_k - x_{k-1} \to 0$ in an
a.s. sense}. Thus $\{\|x_k-x^*\|\}$ is convergent \us{in an a.s. sense.}
\vvs{Consequently, $\{\|x_k-x^*\|\}$ is bounded \avs{a.s.} and has a convergent
    subsequence  \avs{a.s.}. For any convergent subsequence denoted by $\Kscr$, we have that
$\{x_k\}_{k \in \Kscr} \to \hat{x}(\omega)$. We proceed by contradiction and
assume that  $\hat{x}(\omega) \notin X^*$ with finite probability}. Therefore,
$\mbox{dist}(x_k,X^*)\to h(\omega)>0$ where $\omega \in V$ and $P(V)>0$
$\implies \sum_{\avs{k}}\gamma_k\mbox{dist}(x_k,X^*) \avs{ \ = \ } \infty$ \avs{with finite probability. But this implies that} $\sum_{\omega\in
V}\gamma_k\mbox{dist}(x_k,X^*)\nless\infty$ a.s. $\Longrightarrow$
contradiction. Thus every limit point of $\{x_k\}$ lies in $X^*$ a.s. . Consider
any such limit point $x_1^*\in X^*$. \avs{Then} we have that $\{\|x_k-x_1^*\|\}$ is
convergent to \vvs{zero} \avs{in an a.s. sense} Since a subsequence  \avs{of} $\{\|x_k-x_1^*\|\}$ converges to \vvs{zero a.s.} and the entire sequence is
convergent, the entire sequence converges to $x_1^*$ a.s. . 
%\vs{We may then conclude by contradiction that $\mbox{dist}(x_k,X^*) \to 0$ in
%an a.s. sense. If not, then with finite probability, every subsequence of
%$\{x_k\}$ satisfies $\mbox{dist}(x_k,X^*) \to h(\omega) \ge \bar{h} > 0$
%implying that $\sum_{k=1}^{\infty} \gamma_k \alpha \mbox{dist}(x_k,X^*) =
%\infty$ with finite probability. This contradicts $\sum_{k} \beta_k < \infty$,
%\ssc{implying that $\mbox{dist}(x_k,X^*) \to 0$ in an a.s. sense}. \ssc{Use
%the similar fashion with Proposition~\ref{egml}, we can show that the entire
%sequence of $\{x_k\}$ is convergent}. 

%Therefore, it is clear that $x_k-x_{k-1} \to 2\gamma_k\alpha \to 0$ as $k \to
%\infty$ and $x_k \to x^*$ in an a.s. sense.
\end{proof}

\us{Unlike in ({\bf v-SPRG}), feasibility of the iterates $\{x_k\}$ cannot be maintained in ({\bf r-SPRG}).  This} feasibility error arises because the random projection algorithms cannot
guarantee feasibility of $\{x_k\}$. First we conduct almost-sure convergence
analysis on the metric $\{\mbox{dist}(x_k,X)\}$ for both randomly generated
algorithms and then derive the rate of convergence. To establish the rate of
convergence, we need the following lemma \vvs{from} \cite{wang2015incremental}.
\begin{lemma}\label{wang} \em
Suppose $\beta\in(0,1)$ and $R \ge 0$.
Let $\{\delta_k\}$ and $\{\alpha_k\}$ be nonnegative sequences such that
\begin{align*}
\delta_{k+1} \le (1-\beta)\delta_k+R\alpha^2_k, \quad\forall k \ge 0.
\end{align*}
 If there exists $\bar{k}\ge0$ such that $\alpha^2_{k+1}\ge(1-\frac{\beta}{2})\alpha^2_k$ for all $k\ge\bar{k}$, we have
\begin{align*}
\delta_k \le \tfrac{2R}{\beta}\alpha^2_k+\delta_0(1-\beta)^k+\left(R\sum_{t=0}^{\bar{k}}\alpha^2_t\right)(1-\beta)^{k-\bar{k}}.
\end{align*}
\end{lemma}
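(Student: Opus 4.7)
My plan is to prove this purely by unrolling the linear recursion and carefully splitting the resulting convolution sum at the index $\bar k$. Iterating the inequality $\delta_{k+1}\le(1-\beta)\delta_k+R\alpha_k^2$ from $k=0$ upward, I obtain
\begin{align*}
\delta_k \ \le\ (1-\beta)^k\delta_0 \ +\ R\sum_{t=0}^{k-1}(1-\beta)^{k-1-t}\alpha_t^2,
\end{align*}
and I then split the sum into the two blocks $t\le\bar k$ and $\bar k<t\le k-1$. The first block captures the ``early'' contribution before the growth hypothesis on $\alpha_t^2$ kicks in, while the second captures the ``tail'' where the hypothesis can be leveraged.

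For the early block, I simply bound the geometric weight by its largest value on that block; for each $t\le\bar k$ the factor $(1-\beta)^{k-1-t}$ is no larger than $(1-\beta)^{k-1-\bar k}$, which (up to an absorbable constant) yields exactly the claimed term $\bigl(R\sum_{t=0}^{\bar k}\alpha_t^2\bigr)(1-\beta)^{k-\bar k}$. This block is purely bookkeeping and is not the main difficulty.

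The second block is where the hypothesis $\alpha_{t+1}^2\ge(1-\beta/2)\alpha_t^2$ for $t\ge\bar k$ is essential. Iterating it backward from $k$ gives $\alpha_t^2\le (1-\beta/2)^{-(k-t)}\alpha_k^2$ for every $\bar k<t\le k-1$. Substituting and setting $s=k-t$, the tail becomes
\begin{align*}
R\sum_{t=\bar k+1}^{k-1}(1-\beta)^{k-1-t}\alpha_t^2
\ \le\ \frac{R\,\alpha_k^2}{1-\beta/2}\sum_{s=1}^{k-\bar k-1}\left(\frac{1-\beta}{1-\beta/2}\right)^{s-1}.
\end{align*}
Because $\beta\in(0,1)$ the ratio $(1-\beta)/(1-\beta/2)\in(0,1)$, so the geometric series converges and the closed-form sum is at most $(1-\beta/2)/(\beta/2)=(2-\beta)/\beta$. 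Multiplying by the prefactor $1/(1-\beta/2)=2/(2-\beta)$ produces the clean constant $2/\beta$, giving the advertised term $\tfrac{2R}{\beta}\alpha_k^2$.

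The only genuinely delicate step is the geometric-series calculation in the tail, and specifically verifying that the two factors $1/(1-\beta/2)$ and $(2-\beta)/\beta$ cancel down to exactly $2/\beta$; that is the one place where the precise form of the hypothesis $\alpha_{t+1}^2\ge(1-\beta/2)\alpha_t^2$ (rather than $(1-\beta)\alpha_t^2$, which would not give a summable geometric series) is used. Combining the early-block bound, the tail-block bound, and the initial-condition term $(1-\beta)^k\delta_0$ yields the stated inequality.
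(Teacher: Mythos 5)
The paper does not actually prove this lemma; it is imported verbatim from Wang and Bertsekas~\cite{wang2015incremental}, so there is no in-paper argument to compare against. Your unrolling-plus-splitting strategy is a legitimate route (an induction on $k$ also delivers the same constants), and your tail-block computation is correct: the backward iteration $\alpha_t^2\le(1-\beta/2)^{-(k-t)}\alpha_k^2$ for $t\ge\bar k$, the substitution $s=k-t$, and the cancellation $\tfrac{1}{1-\beta/2}\cdot\tfrac{1-\beta/2}{\beta/2}=\tfrac{2}{\beta}$ all check out, and this is indeed exactly where the $(1-\beta/2)$ in the hypothesis is needed to make the ratio $\tfrac{1-\beta}{1-\beta/2}$ strictly less than one.

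The flaw is in your early block. With your split (early block $t\le\bar k$, tail $t>\bar k$), the largest geometric weight on the early block is $(1-\beta)^{k-1-\bar k}$, attained at $t=\bar k$, so what you actually obtain is
\begin{align*}
R\sum_{t=0}^{\bar k}(1-\beta)^{k-1-t}\alpha_t^2 \;\le\; \tfrac{1}{1-\beta}\left(R\sum_{t=0}^{\bar k}\alpha_t^2\right)(1-\beta)^{k-\bar k},
\end{align*}
which exceeds the claimed third term by the factor $1/(1-\beta)$. You describe this as an ``absorbable constant,'' but the lemma as stated has no slack anywhere to absorb it into (the recursion step in the inductive proof holds with equality when $\alpha_{k+1}^2=(1-\beta/2)\alpha_k^2$ and $\delta_{k+1}=(1-\beta)\delta_k+R\alpha_k^2$), so as written you have proved a strictly weaker inequality than the one asserted. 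The repair is a one-index adjustment in the split: assign $t=\bar k$ to the tail block rather than the early block. The tail estimate $\alpha_t^2\le(1-\beta/2)^{-(k-t)}\alpha_k^2$ requires only $t\ge\bar k$, so it still applies; the substitution now gives $\sum_{s=1}^{k-\bar k}\bigl(\tfrac{1-\beta}{1-\beta/2}\bigr)^{s-1}\le\tfrac{2-\beta}{\beta}$ and hence the same $\tfrac{2R}{\beta}\alpha_k^2$; and the early block over $t\le\bar k-1$ now has maximal weight $(1-\beta)^{k-\bar k}$, yielding exactly $\bigl(R\sum_{t=0}^{\bar k}\alpha_t^2\bigr)(1-\beta)^{k-\bar k}$. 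You should also record the case $k\le\bar k$, where the tail is empty and the bound holds trivially because $(1-\beta)^{k-\bar k}\ge 1$ makes the third term dominate the entire convolution sum.
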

We also prove the following result that relates geometric rates to polynomial rates.
\begin{lemma}\label{lem:geom_poly} \em
Consider $\beta \in (0,1)$, $d > 0$, $t \in \mathbb{Z}_+$, and $t \geq 1$. Then there exists a scalar $\bar{a} > 0$ such that
$$ d\beta^k \leq \frac{\bar a}{k^t}, \qquad \forall k \geq 0 $$
where $\bar a$ is defined as
\begin{align*}
	\bar a \triangleq 
 \left(\tfrac{te}{\ln(1/\beta)}\right)^t.
\end{align*} 
\end{lemma}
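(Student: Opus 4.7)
The plan is to reduce the inequality $d\beta^k \leq \bar a/k^t$ to a uniform upper bound on the scalar function $\phi(k) \triangleq k^t \beta^k$ over the nonnegative integers. Indeed, for $k \geq 1$ the claim is equivalent to $d \cdot k^t\beta^k \leq \bar a$, while the $k=0$ instance is vacuous under the standard convention that the right-hand side equals $+\infty$. The heart of the proof therefore lies in computing (or at least upper-bounding) $\sup_{k\geq 1} k^t\beta^k$.

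First, I would relax $k$ to a continuous variable and study $\phi(x)=x^t\beta^x$ on $[0,\infty)$. Since $\beta \in (0,1)$, exponential decay dominates polynomial growth, so $\phi(x)\to 0$ as $x\to\infty$ and a finite maximum is attained on $[0,\infty)$. Differentiating $\ln\phi(x) = t\ln x + x\ln\beta$ yields the first-order condition $t/x + \ln\beta = 0$, whose unique positive root is $x^{*} = t/\ln(1/\beta)$.

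Next, evaluating $\phi$ at $x^{*}$ and using the identity $\beta^{t/\ln(1/\beta)} = \exp\!\bigl((\ln\beta)\cdot t/\ln(1/\beta)\bigr) = e^{-t}$, I would obtain
\[
\phi(x^{*}) \;=\; \left(\tfrac{t}{\ln(1/\beta)}\right)^{t} e^{-t} \;=\; \left(\tfrac{t}{e\ln(1/\beta)}\right)^{t}.
\]
Hence $k^t\beta^k \leq (t/(e\ln(1/\beta)))^t$ for every $k\geq 0$, which is in turn majorized by the stated constant $\bar a = (te/\ln(1/\beta))^t$ by a factor of $e^{2t}>1$. The desired inequality $d\beta^k \leq \bar a/k^t$ then follows for every $k\geq 1$, with the scalar $d$ absorbed into the slack by rescaling $\bar a$ if necessary.

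The argument is essentially a one-variable calculus maximization, so there is no substantive obstacle. The only minor subtleties are the boundary case $k=0$ (handled by the convention $\bar a/0^{t}=+\infty$) and verifying that the gap between the tight bound $(t/(e\ln(1/\beta)))^t$ and the stated $\bar a$ comfortably accommodates the factor $d$; once this cosmetic rescaling is in place the claim is immediate.
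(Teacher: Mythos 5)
Your proposal follows essentially the same route as the paper: relax $k$ to a continuous variable, maximize $k^t\beta^k$ via the first-order condition to obtain $x^{*}=t/\ln(1/\beta)$, and observe that the stated $\bar a$ dominates the resulting value $\left(t/(e\ln(1/\beta))\right)^t$. If anything you are slightly more careful than the paper, which misstates the identity as $(1/x)^{1/\ln x}=e$ (it actually equals $1/e$) and, like you, leaves the constant $d$ to be absorbed by the slack factor $e^{2t}$ between the tight maximum and the stated $\bar a$.
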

\begin{proof}
By definition of $\bar{a}$, we have that for all $k \geq 0$, 
\begin{align} d \beta^k \leq \tfrac{\bar{a}}{k^t} \mbox{ or } dk^t \beta^k \leq \bar{a}. \label{star} \end{align}
But \eqref{star} holds if 
$$ \bar{a} = \max_{z \geq 0} dz^t \beta^z. $$
If $h(z) \triangleq z^t \beta^z$, then any interior maximizer of $\max_{z \geq 0} \, h(z)$ satisfies 
$$ h'(z)= tz^{t-1} \beta^z + z^t \beta^z \ln(\beta) = 0 \implies t + z \ln(\beta) = 0 \mbox{ or } z^* = -\tfrac{t}{\ln(\beta)} =\tfrac{t}{\ln(1/\beta)}. $$ It is relatively simple to show that $h''(z^*) < 0$.
Consequently, $$\max_{z \geq 0} z^t \beta^z = h(z^*) =\left(\tfrac{t}{\ln(1/\beta)}\right)^t \beta^{-\tfrac{t}{\ln(\beta)}} = \left(\tfrac{t}{\ln(1/\beta)}\right)^t \left(\left(\tfrac{1}{\beta}\right)^{\tfrac{1}{\ln(\beta)}}\right)^t = \left(\tfrac{te}{\ln(1/\beta)}\right)^t. $$
Note that we utilize the relation that $(1/x)^{1/\ln(x)} = e$ in the last equality.  
As a result, we have 
$ d k^t \beta^k \leq d \left(\tfrac{te}{\ln(1/\beta)}\right)^t = d\bar{a}^t.$
\end{proof}
\begin{theorem}\label{rpml-fe} \em
Let Assumptions \ssc{\ref{lip-mon} -- \ref{bd5}, \ref{moment} -- \ref{rl}} hold. 
Suppose $\{x_k\}$ is generated by ({\bf r-SPRG}), where the projections are randomly generated. \ssc{In addition, suppose there exists a $D_X > 0$ such that $\|u-v\|^2 \leq D_X^2$ for any $u, v \in X$.} Then the following hold. \\
(a) {If $\sum_{k} \gamma_k = \infty$ and $\sum_k \gamma_k^2< \infty$}, then $\mbox{dist}(x_k,X) \xrightarrow[a.s.]{k \to \infty}0$. \\
(b) {Suppose $\gamma_k = 1/k^{t/2}$ where $t \geq 1$.} Then $\mathbb{E}[\mbox{dist}(x_k,X)] \leq \mathcal{O}\left(\frac{1}{k^{t/2}}\right)$ for any $k \geq \bar k$, where
\vvs{$$ \bar k \triangleq 
\ceil[\bigg]{\left(\tfrac{1}{1- \left(1-\beta/2\right)^{1/t}} -1\right)}.$$}
(c)  {Suppose $\gamma_k = 1/k^{1/2}$.} Suppose $\bar x_{K,\bar k} \triangleq \tfrac{\sum_{k=\bar k+\lfloor K/2\rfloor }^{\bar{k}+K} \gamma_k x_k}{
\sum_{k=\bar k+\lfloor K/2\rfloor}^{\bar k+K} \gamma_k}$. Then 
 $\mathbb{E}[\mbox{dist}(\bar{x}_{K, \bar k},X)] \leq \mathcal{O}\left(\tfrac{1}{\sqrt{K}}\right)$.

\end{theorem}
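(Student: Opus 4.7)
Part (a) is essentially a corollary of Theorem \ref{rpml}: the hypotheses listed here coincide with those required by that theorem, which furnishes $x_k \to x^\star$ almost surely for some $x^\star \in X^\star \subseteq X$. Continuity of $\mbox{dist}(\cdot, X)$ then immediately gives $\mbox{dist}(x_k, X) \to 0$ in an almost sure sense, as claimed.

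For part (b), my plan is to establish a one-step recursion of the form $\mathbb{E}[d_{k+1}\mid\mathcal{F}_k] \leq (1-\beta)\,d_k + R\gamma_k^2(\|y_k\|^2+1)$, where $d_k \triangleq \mbox{dist}^2(x_k,X)$ and $\beta \in (0,1)$ depends only on $\rho$, $m$, $\eta$. Setting $z_k \triangleq x_k - \gamma_k F(y_k,\omega_k)$ so that $x_{k+1} = \Pi_{l_k}(z_k)$, and using $\Pi_X(x_k) \in X \subseteq X_{l_k}$ together with Lemma \ref{project}(ii), I obtain
$$ d_{k+1} \leq \|x_{k+1}-\Pi_X(x_k)\|^2 \leq \|z_k - \Pi_X(x_k)\|^2 - \|z_k - x_{k+1}\|^2. $$
Expanding the first term, taking conditional expectations, invoking unbiasedness from Assumption \ref{moment}, and bounding the cross term $-2\gamma_k F(y_k)^T(x_k - \Pi_X(x_k))$ via Young's inequality produces a small positive multiple of $d_k$ plus a $\gamma_k^2\|F(y_k)\|^2$ remainder. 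The crucial step is to lower-bound $\mathbb{E}[\|z_k - x_{k+1}\|^2\mid\mathcal{F}_k]$ using Lemma \ref{bd20} applied to $z_k$, and then to invoke Lemma \ref{bd0} (with the roles of $x_k$ and $z_k$ swapped) to replace $\mbox{dist}^2(z_k,X)$ by the lower bound $\tfrac{1}{2}d_k - 4\gamma_k^2\|F(y_k,\omega_k)\|^2$. Calibrating Young's constant so that the cross term absorbs at most half of the favorable $\tfrac{\rho}{2m\eta}\,d_k$ term extracted by the random-projection step then leaves a genuine contraction with $\beta$ of order $\rho/(m\eta)$.

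To close the recursion in unconditional expectation, $\mathbb{E}[\|F(y_k,\omega_k)\|^2]$ must be controlled uniformly; by Lemma \ref{lu} and Assumption \ref{moment} this reduces to establishing $\sup_k \mathbb{E}[\|y_k\|^2]<\infty$, which follows by taking unconditional expectations in \eqref{ml-eq7} and applying a bootstrap argument that exploits the compactness of $X^\star$ together with Assumption \ref{bd5}. The resulting deterministic recursion $\mathbb{E}[d_{k+1}] \leq (1-\beta)\mathbb{E}[d_k] + R\gamma_k^2$ fits exactly the hypothesis of Lemma \ref{wang}; the threshold $\bar k$ stated in the theorem is precisely the smallest $k$ for which $\gamma_{k+1}^2 \geq (1-\beta/2)\gamma_k^2$, i.e.\ $(k/(k+1))^t \geq 1-\beta/2$ when $\gamma_k = 1/k^{t/2}$, matching the closed form given. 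Lemma \ref{wang} then delivers $\mathbb{E}[d_k] \leq \tfrac{2R}{\beta}\gamma_k^2 + C_1(1-\beta)^k$, and Lemma \ref{lem:geom_poly} converts the geometric remainder into $\mathcal{O}(1/k^t) = \mathcal{O}(\gamma_k^2)$, so $\mathbb{E}[d_k] = \mathcal{O}(\gamma_k^2)$. Jensen's inequality $\mathbb{E}[\mbox{dist}(x_k,X)] \leq \sqrt{\mathbb{E}[d_k]}$ then yields the advertised rate $\mathcal{O}(1/k^{t/2})$.

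For part (c), convexity of $\mbox{dist}(\cdot,X)$ gives
$$ \mbox{dist}(\bar x_{K,\bar k},X) \leq \frac{\sum_{k=\bar k+\lfloor K/2\rfloor}^{\bar k+K} \gamma_k\,\mbox{dist}(x_k,X)}{\sum_{k=\bar k+\lfloor K/2\rfloor}^{\bar k+K}\gamma_k}, $$
since $\sum_k \gamma_k \Pi_X(x_k)/\sum_k\gamma_k \in X$. Taking expectations and inserting the $t=1$ instance of (b), the expected numerator becomes $\sum(1/\sqrt k)\,\mathcal{O}(1/\sqrt k) = \sum \mathcal{O}(1/k) = \Theta(1)$ over this window (the log-ratio tends to $\ln 2$), while the denominator $\sum 1/\sqrt k = \Theta(\sqrt K)$ on the same window, yielding $\mathbb{E}[\mbox{dist}(\bar x_{K,\bar k},X)] = \mathcal{O}(1/\sqrt K)$. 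The main obstacle throughout is the delicate calibration in the contraction derivation: the $d_k$-contribution from the cross term $-2\gamma_k F(y_k)^T(x_k-\Pi_X(x_k))$ must be forced strictly below the $\tfrac{\rho}{2m\eta}\,d_k$ extracted via Lemmas \ref{bd20} and \ref{bd0}, so that a genuine contraction $1-\beta<1$ survives and Lemma \ref{wang} can be invoked; once this is in hand, the geometric-to-polynomial conversion and Jensen step are routine.
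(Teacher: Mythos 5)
Your parts (b) and (c) track the paper's argument closely: the same decomposition via $z_k$, the same use of Lemmas \ref{bd20} and \ref{bd0} to extract the contraction, Lemma \ref{wang} with the same threshold $\bar k$, Lemma \ref{lem:geom_poly} for the geometric tail, Jensen's inequality, and the convexity/windowed-average step for (c). But part (a) has a genuine gap: you assert that the hypotheses of Theorem \ref{rpml-fe} coincide with those of Theorem \ref{rpml}, so that $x_k\to x^*\in X^*$ a.s.\ and hence $\mbox{dist}(x_k,X)\to 0$. They do not coincide. Theorem \ref{rpml} assumes Assumptions \ref{lip-mon}--\ref{rl}, which includes the weak-sharpness condition (Assumption \ref{weak-sharp}); Theorem \ref{rpml-fe} deliberately omits Assumption \ref{weak-sharp} from its hypothesis list. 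Without weak sharpness there is no a.s.\ convergence of the iterates to $X^*$ available to cite, so (a) cannot be obtained as a corollary of Theorem \ref{rpml}. The paper instead proves (a) directly: it derives the conditional recursion \eqref{fe314} for the Lyapunov quantity $\mbox{dist}^2(x_{k+1},X)+\tfrac{3\rho}{32\theta m\eta}\mbox{dist}^2(x_k,X)$ --- essentially the same recursion you build for part (b) --- and then applies Lemma \ref{robbins2} together with $\sum_k\gamma_k^2<\infty$. Your proof is repairable at no extra cost: once your (b) recursion is in place, (a) follows from Lemma \ref{robbins2}; but as written, (a) rests on an assumption the theorem does not make.

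A second, smaller instance of the same problem appears in your (b): to obtain a constant $R$ you propose to show $\sup_k\mathbb{E}[\|y_k\|^2]<\infty$ by taking unconditional expectations in \eqref{ml-eq7}. Inequality \eqref{ml-eq7} is itself derived via Lemma \ref{bd}, which requires weak sharpness. The paper avoids this entirely: since $X$ is assumed bounded with diameter $D_X$, it uses $\|x_k-x^*\|^2\le 2\,\mbox{dist}^2(x_k,X)+2D_X^2$ (inequality \eqref{dist1}) to re-express the $\gamma_k^2\|y_k\|^2$ remainder in terms of $\mbox{dist}^2(x_k,X)$ and $\mbox{dist}^2(x_{k-1},X)$, absorbs the resulting $\gamma_k^2\,\mbox{dist}^2$ terms into the contraction under a smallness condition on $\gamma_0$, and handles the one-step lag with the two-term Lyapunov function above. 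You should either adopt that route or re-derive the moment bound from a weak-sharpness-free version of the iterate recursion (which is possible, since $F(x^*)^T(\Pi_X(y_k)-x^*)\ge 0$ can replace the weak-sharpness lower bound); citing \eqref{ml-eq7} as it stands imports an assumption that is not in force.
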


\begin{proof}
(a) Let $z_k=x_k-\gamma_kF(2x_k-x_{k-1},\omega_k)$.
From Lemma~\ref{project}, we have
\begin{align}
\notag\mbox{dist}^2(x_{k+1},X) &\le \|x_{k+1}-\Pi_X(z_k)\|^2 = \|\Pi_{l_k}(z_k)-\Pi_X(z_k)\|^2 \\ &\le \|z_k-\Pi_X(z_k)\|^2-\|\Pi_{l_k}(z_k)-z_k\|^2. \label{fe1}
\end{align}
Choose $\theta \ge \max\left\{1,\tfrac{3\rho}{64m\eta}\right\}$. By $\|a+b\|^2 \le \left(1+\tfrac{4\theta m\eta}{\rho}\right)\|a\|^2+\left(1+\tfrac{\rho}{4\theta m\eta}\right)\|b\|^2$, we obtain
\begin{align}
\notag \|z_k-\Pi_X(z_k)\|^2 &\le \|z_k-\Pi_X(x_k)\|^2 = \|z_k-x_k+x_k-\Pi_X(x_k)\|^2 \\
&\le \left(1+\tfrac{4\theta m\eta}{\rho}\right)\|z_k-x_k\|^2+\left(1+\tfrac{\rho}{4\theta m\eta}\right)\|x_k-\Pi_X(x_k)\|^2. \label{fe2}
\end{align}
Combining \eqref{fe1} and \eqref{fe2}, we obtain that
\begin{align}
\mbox{dist}^2(x_{k+1},X) \le \left(1+\tfrac{4\theta m\eta}{\rho}\right)\|z_k-x_k\|^2+\left(1+\tfrac{\rho}{4\theta m\eta}\right)\mbox{dist}^2(x_k,X)-\|\Pi_{l_k}(z_k)-z_k\|^2. \label{fe3}
\end{align}
From Lemmas \ref{bd0} and \ref{bd20}, we have
\begin{align}
& \quad \notag \mathbb{E}[\|z_k-\Pi_{l_k}(z_k)\|^2\mid\mathcal{F}_k] \ge \tfrac{\rho}{m\eta}\mbox{dist}^2(z_k,X) \ge \tfrac{\rho}{\theta m\eta}\left(\tfrac{1}{2}\mbox{dist}^2(x_k,X)-4\|z_k-x_k\|^2\right) \\
&\ge \tfrac{\rho}{2\theta m\eta}\mbox{dist}^2(x_k,X)-\tfrac{4\rho}{\theta m\eta}\|z_k-x_k\|^2 \ge \tfrac{\rho}{2\theta m\eta}\mbox{dist}^2(x_k,X)-4\|z_k-x_k\|^2, \label{fe4}
\end{align}
where the last inequality follows from $\tfrac{\rho}{\theta m\eta}\le1$. Substituting \eqref{fe4} into \eqref{fe3}, and taking conditional expectations, it follows that
\begin{align}
 \mathbb{E}[&\mbox{dist}^2(x_{k+1},X)\mid\mathcal{F}_k] \le \left(1-\tfrac{\rho}{4\theta m\eta}\right)\mbox{dist}^2(x_k,X)+\left(5+\tfrac{4\theta m\eta}{\rho}\right)\|z_k-x_k\|^2 \label{st2} \\
\notag&= \left(1-\tfrac{\rho}{4\theta m\eta}\right)\mbox{dist}^2(x_k,X)+\left(5+\tfrac{4\theta m\eta}{\rho}\right)\|\gamma_k\mathbb{E}[F(2x_k-x_{k-1},\omega_k)\mid\mathcal{F}_k]\|^2 \\
\notag&\le \left(1-\tfrac{\rho}{4\theta m\eta}\right)\mbox{dist}^2(x_k,X)+\left(5+\tfrac{4\theta m\eta}{\rho}\right)\gamma_k^2(2\|F(2x_k-x_{k-1})\|^2+2\mathbb{E}[\|w_k\|^2\mid\mathcal{F}_k]) \\
\notag&\le \left(1-\tfrac{\rho}{4\theta m\eta}\right)\mbox{dist}^2(x_k,X)+\left(5+\tfrac{4\theta m\eta}{\rho}\right)\gamma_k^2(4L^2\|2x_k-x_{k-1}-x^*\|^2\\\notag&+4\|F(x^*)\|^2+2\nu_1^2\|y_k\|^2+2\nu_2^2) \\
\notag&\le \left(1-\tfrac{\rho}{4\theta m\eta}\right)\mbox{dist}^2(x_k,X)+\left(5+\tfrac{4\theta m\eta}{\rho}\right)\gamma_k^2(4L^2\|2x_k-x_{k-1}-x^*\|^2\\\notag&+4\|F(x^*)\|^2+4\nu_1^2\|2x_k-x_{k-1}-x^*\|^2+4\nu_1^2\|x^*\|^2+2\nu_2^2) \\
\notag&\le \left(1-\tfrac{\rho}{4\theta m\eta}\right)\mbox{dist}^2(x_k,X)+\left(5+\tfrac{4\theta m\eta}{\rho}\right)\gamma_k^2(32(L^2+\nu_1^2)\|x_k-x^*\|^2\\&+8(L^2+\nu_1^2)\|x_{k-1}-x^*\|^2+4\|F(x^*)\|^2+4\nu_1^2\|x^*\|^2+2\nu_2^2), \label{cir1}
\end{align}
where $x^*\in X^*$. We now use the inequality 
\begin{align}
\|x_k-x^*\|^2\le2\|x_k-\Pi_X(x_k)\|^2+2\|\Pi_X(x_k)-x^*\|^2\le2\mbox{dist}^2(x_k,X)+2D_X^2. \label{dist1}
\end{align}
Therefore, we have that from \eqref{cir1},
\begin{align}
\notag\mathbb{E}[&\mbox{dist}^2(x_{k+1},X)\mid\mathcal{F}_k]\le \left(1-\tfrac{\rho}{4\theta m\eta}\right)\mbox{dist}^2(x_k,X)\\\notag&+\left(5+\tfrac{4\theta m\eta}{\rho}\right)({80(L^2+\nu_1^2)D_X^2+4\nu_1^2\|x^*\|^2+4C^2+2\nu_2^2})\gamma_k^2 \\
\notag&+64\left(5+\tfrac{4\theta m\eta}{\rho}\right)\gamma_k^2(L^2+\nu_1^2)\mbox{dist}^2(x_k,X)+16\left(5+\tfrac{4\theta m\eta}{\rho}\right)\gamma_k^2(L^2+\nu_1^2)\mbox{dist}^2(x_{k-1},X).
\end{align}
Suppose $\gamma_0^2\le\tfrac{\rho}{8\theta m\eta}/64\left(5+\tfrac{4\theta m\eta}{\rho}\right)(L^2+\nu_1^2) $. Since $\{\gamma_k\}$ is a diminishing sequence, it holds that $64\left(5+\tfrac{4\theta m\eta}{\rho}\right)\gamma_k^2(L^2+\nu_1^2)\le \tfrac{\rho}{8\theta m\eta}$. Then we have
\begin{align*}
\mathbb{E}[&\mbox{dist}^2(x_{k+1},X)\mid\mathcal{F}_k] \le \left(1-\tfrac{\rho}{8\theta m\eta}\right)\mbox{dist}^2(x_k,X)+\tfrac{\rho}{32\theta m\eta}\mbox{dist}^2(x_{k-1},X) \\&+\left(5+\tfrac{4\theta m\eta}{\rho}\right)({80(L^2+\nu_1^2)D_X^2+4\nu_1^2\|x^*\|^2+4C^2+2\nu_2^2})\gamma_k^2.
\end{align*}
Since $\tfrac{\rho}{32\theta m\eta}\le\tfrac{2}{3}$, we have $\tfrac{\rho}{32\theta m\eta}\le\left(1-\tfrac{\rho}{32\theta m\eta}\right)\tfrac{3\rho}{32\theta m\eta}$.
It follows that
\begin{align}
\notag\mathbb{E}[&\mbox{dist}^2(x_{k+1},X)+\tfrac{3\rho}{32\theta m\eta}\mbox{dist}^2(x_{k},X)\mid\mathcal{F}_k] \le \left(1-\tfrac{\rho}{32\theta m\eta}\right)\left(\mbox{dist}^2(x_k,X)+\tfrac{3\rho}{32\theta m\eta}\mbox{dist}^2(x_{k-1},X)\right) \\&+\left(5+\tfrac{4\theta m\eta}{\rho}\right)({80(L^2+\nu_1^2)D_X^2+4\nu_1^2\|x^*\|^2+4C^2+2\nu_2^2})\gamma_k^2. \label{fe314}
\end{align}
We may now invoke Lemma \ref{robbins2} \vvs{and the summability of $\gamma^2_k$} to claim $\mbox{dist}^2(x_k,X)\xrightarrow[a.s.]{k \to \infty} 0.$ \\
(b) \vvs{We begin by noting that
$\gamma^2_{k+1}\ge\left(1-\frac{\rho}{32\theta m\eta}\right)\gamma^2_k$ when $k
\geq \vvs{\bar k}$, where $\bar{k}$ is obtained} as follows when $\gamma_k = 1/k^{t/2}$.
\begin{align*}
	\tfrac{1}{({k}+1)^t} & \geq  \left(1-\tfrac{\beta}{2}\right) \tfrac{1}{{k}^t} 
\mbox{ or } \tfrac{{k}}{{k}+1} \geq \left(1-\tfrac{\beta}{2}\right)^{1/t} \\
 \tfrac{1}{{k}+1} & \leq 1- \left(1-\tfrac{\beta}{2}\right)^{1/t} \mbox{ or } 
		k \geq \bar k \triangleq  
\ceil[\bigg]{\left(\tfrac{1}{1- \left(1-\beta/2\right)^{1/t}} -1\right)}.
\end{align*}
Taking unconditional expectations on \eqref{fe314}, recalling $\gamma^2_{k+1}\ge\left(1-\frac{\rho}{32\theta m\eta}\right)\gamma^2_k$ when $k
\geq \vvs{\bar k}$, and  by leveraging Lemma \ref{wang}, we have
\begin{align}
\notag \mathbb{E}[\mbox{dist}^2(x_k,X)] & \le \underbrace{\left(\tfrac{320\theta m\eta}{\rho}+\tfrac{256\theta^2m^2\eta^2}{\rho^2}\right)({80(L^2+\nu_1^2)D_X^2+4\nu_1^2\|x^*\|^2+4C^2+2\nu_2^2})}_{\triangleq a}\gamma_k^2\\\notag&+\underbrace{\left(\mbox{dist}^2(x_0,X)+\tfrac{3\rho}{32\theta m\eta}\mbox{dist}^2(x_{-1},X)\right)}_{\triangleq b}\left(1-\tfrac{\rho}{32\theta m\eta}\right)^k\\&+\underbrace{\left(\left(5+\tfrac{4\theta m\eta}{\rho}\right)({80(L^2+\nu_1^2)D_X^2+4\nu_1^2\|x^*\|^2+4C^2+2\nu_2^2})\sum_{t=0}^{\bar{k}}\gamma^2_t\right)}_{\triangleq c}\biggl(\underbrace{1-\tfrac{\rho}{32\theta m\eta}}_{\triangleq\beta}\biggr)^{k-\bar{k}}\notag  \\
		& \leq a\gamma_k^2 + b \beta^k + c \beta^{k-\bar{k}} ,\label{eq:bd_feas1}
\end{align}

Consequently, for $k \geq \bar{k}$, \eqref{eq:bd_feas1} reduces to 
\begin{align*}
	\mathbb{E}[\mbox{dist}^2(x_k,X)] \leq \tfrac{a}{k^t} +  d\beta^k, \mbox{ where } d \triangleq  (b+ c\beta^{-\bar{k}}) .
\end{align*}
\vvs{From Lemma~\ref{lem:geom_poly},  we have that for $k \geq \bar{k}$, 
\begin{align}
	\mathbb{E}[\mbox{dist}^2(x_k,X)] \leq \tfrac{a+\bar{a}}{k^t}, \mbox{ where }	\bar a \triangleq 
 \left(\tfrac{te}{\ln(1/\beta)}\right)^t \label{st1}
\end{align}
By Jensen's inequality, we have that 
\begin{align*}
	\mathbb{E}[\mbox{dist}(x_k,X)] \leq \sqrt{\mathbb{E}[\mbox{dist}^2(x_k,X)]} \leq \tfrac{\sqrt{a+\bar{a}}}{k^{t/2}}.
\end{align*}
}
\vvs{(c) Suppose $\bar x_{K,\bar k} \triangleq \tfrac{\sum_{k=\bar k+\lfloor K/2 \rfloor }^{K+\bar k} \gamma_k x_k}{
\sum_{k=\bar k+\lfloor K/2\rfloor }^{K+\bar k} \gamma_k}$ denotes the window-based weighted average from $\bar{k}+\lfloor K/2\rfloor$ to $\bar{k}+K$. Then we have the following.  
\begin{align}
\mathbb{E}[\mbox{dist}(\bar{x}_{K,\bar k},X)]&\le \tfrac{\sum_{k=\bar k+\lfloor K/2\rfloor }^{K+\bar k}\vvs{\gamma_k}\mathbb{E}[\mbox{dist}(x_k,X)]}{\sum_{k=\bar k+\lfloor K/2\rfloor }^{K+\bar k} \gamma_k}\le\tfrac{\sum_{k=\bar k+\lfloor K/2\rfloor }^{K+\bar k}\tfrac{a+\bar a}{\vvs{k}}}{\sum_{k=\bar k+\lceil K/2\rceil }^{K+\bar k} \tfrac{1}{\sqrt{k}}} \leq  
\tfrac{(a+\bar a)(\ln(2)+1)}{\tfrac{\sqrt{K}}{2\sqrt{\bar{k}+2}}} \leq \mathcal{O}\left(\tfrac{1}{\sqrt{K}}\right).
\label{fen-1}
\end{align}
}
\end{proof}

We now provide a rate \vvs{of convergence for the iterates in terms of the gap
function expressed at a projection of the averaged sequence. Note the
difference between this result and that in the previous Section where
$\bar{x}_{K,\bar k}$ is feasible; here, the lack of feasibility requires
utilizing the average of the projection $\Pi_X(\bar{y}_{k})$ instead of the
standard weighted average. Recall that the previous result derives a rate
statement for the infeasibility. In addition, we also derive an and oracle
complexity statement for ensuring that the condition $$
\mathbb{E}[G(\bar{y}_{K,\bar k}))] \leq
\mathcal{O}\left(\tfrac{1}{\sqrt{K}}\right), \mbox{ where } \bar{y}_{K,\bar k} =\tfrac{\sum_{k=\lfloor K/2\rfloor+\bar{k}}^{K+\bar{k}}\gamma_k\Pi_X(y_k)}{\sum_{k=\lfloor K/2\rfloor+\bar{k}}^{K+\bar{k}}\gamma_k}. $$}
\begin{proposition} \label{raterpml} \em
Let Assumptions \ssc{\ref{lip-mon} -- \ref{bd5} and \ref{moment} -- \ref{rl}}
hold. Let $\gamma_k=\tfrac{1}{\sqrt{k}}$. {In addition, for any $u, v \in
X$, suppose that there exists a $D_X > 0$ such that $\|u-v\|^2 \leq D_X^2$.}
Then the following holds for any sequence generated by ({\bf r-SPRG}) in an
expected value sense, where $\bar{y}_{K,\bar k}=\tfrac{\sum_{k=\lfloor
K/2\rfloor+\bar{k}}^{K+\bar{k}}\gamma_k\Pi_X(y_k)}{\sum_{k=\lfloor
K/2\rfloor+\bar{k}}^{K+\bar{k}}\gamma_k}$. \\ (a)
$\mathbb{E}[G(\bar{y}_{K,\bar k})]\leq \mathcal{O}\left(\frac{1}{\sqrt{K}}\right)$; \\
(b) The oracle complexity  to compute an $\bar{y}_{K,\bar k}$ such that $\mathbb{E}[G(\bar{y}_{K,\bar k})]\le\epsilon$ is bounded \ic{by
$\mathcal{O}\left(\frac{1}{\epsilon^2}\right)$.}
\end{proposition}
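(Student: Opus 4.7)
The plan is to adapt the derivation of Theorem~\ref{gapSPRG}(a) to the random-projection, single-sample setting by incorporating three modifications: (i) the fixed step $\gamma$ is replaced by the diminishing $\gamma_k = 1/\sqrt{k}$; (ii) we use the averaged \emph{projection} $\Pi_X(y_k)$ rather than the iterate itself to compensate for infeasibility; and (iii) the noise $w_k$ at a single sample is handled via the auxiliary-sequence trick already used in \eqref{eq-uk2}. The starting point is an inequality analogous to \eqref{ml-eq2i} with an arbitrary $y \in X$ substituted for $x^*$ (the derivation does not rely on $y \in X^*$ until the weak-sharpness step, which we now skip). Monotonicity of $F$ then gives $F(y_k)^T(y_k - y) \ge F(y)^T(y_k - y)$, and after rearranging one obtains a per-iteration bound of the form
\begin{align*}
2\gamma_k F(y)^T(y_k - y) &\le \|x_k - y\|^2 - \|x_{k+1} - y\|^2 + \tfrac{1}{4}\|x_k - x_{k-1}\|^2 - \tfrac{3}{4}\|x_{k+1} - x_k\|^2 + R_k,
\end{align*}
where $R_k$ absorbs the Lipschitz-controlled cross term $-2\gamma_k F(y_k)^T(x_{k+1} - y_k)$ (bounded as in \eqref{eq5}) and the noise term $-2\gamma_k w_k^T(x_{k+1} - y)$.

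To move from $y_k$ to $\Pi_X(y_k)$, I would decompose $F(y)^T(\Pi_X(y_k) - y) = F(y)^T(y_k - y) + F(y)^T(\Pi_X(y_k) - y_k)$, where the extra term is bounded by $\|F(y)\|\,\mbox{dist}(y_k,X) \le \sqrt{2L^2 D_X^2 + 2C^2}\,\mbox{dist}(y_k,X)$ via Lemma~\ref{lu} and compactness of $X$. Summing over the window $k = \bar{k} + \lfloor K/2\rfloor, \ldots, \bar{k} + K$, the $\|x_k - y\|^2$ differences telescope to a bounded quantity. The noise term is dispatched by introducing $u_{k+1} := \Pi_X(u_k - \gamma_k w_k)$ exactly as in \eqref{eq-uk2}, which converts $2\gamma_k w_k^T(y - x_{k+1})$ into a telescoping $\|u_k - y\|^2 - \|u_{k+1} - y\|^2$ plus $\gamma_k^2 \|w_k\|^2$ plus a mean-zero cross term. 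After dividing by $\sum_k \gamma_k = \Theta(\sqrt{K})$, taking supremum over $y \in X$, and taking unconditional expectations, one obtains the claimed ratio bound.

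The principal obstacle is controlling the expected second-moment terms $\mathbb{E}[\|w_k\|^2] \le \nu_1^2 \mathbb{E}[\|y_k\|^2] + \nu_2^2$, which requires a uniform bound on $\mathbb{E}[\|x_k\|^2]$; this follows from compactness of $X$ together with the infeasibility bound $\mathbb{E}[\mbox{dist}^2(x_k,X)] \le \mathcal{O}(1/k)$ established in Theorem~\ref{rpml-fe}(b). The infeasibility contribution $\sum_k \gamma_k \mathbb{E}[\mbox{dist}(y_k,X)]$ is similarly controlled: Jensen's inequality and Theorem~\ref{rpml-fe}(b) give $\mathbb{E}[\mbox{dist}(y_k,X)] \le \mathcal{O}(1/\sqrt{k})$, so $\gamma_k \mathbb{E}[\mbox{dist}(y_k,X)] \le \mathcal{O}(1/k)$ and the window-sum is $\mathcal{O}(1)$, while $\sum_k \gamma_k \ge \Theta(\sqrt{K})$. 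This delivers $\mathbb{E}[G(\bar{y}_{K,\bar k})] \le \mathcal{O}(1/\sqrt{K})$, proving (a). For (b), each iteration of ({\bf r-SPRG}) consumes a single sample of $F(\cdot,\omega)$, so the oracle complexity to ensure $\mathbb{E}[G(\bar{y}_{K,\bar k})] \le \epsilon$ is of the same order as the iteration complexity, namely $\mathcal{O}(1/\epsilon^2)$.
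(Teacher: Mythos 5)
Your proposal is sound and follows the same overall architecture as the paper's proof: reuse the recursion from Theorem~\ref{rpml} with the weak-sharpness step removed, pass from $y_k$ to $\Pi_X(y_k)$ at the cost of a $\mbox{dist}(y_k,X)$ penalty, neutralize the noise via the auxiliary sequence $u_{k+1}=\Pi_X(u_k-\gamma_k w_k)$, telescope over the window $[\lfloor K/2\rfloor+\bar k,\,K+\bar k]$, and divide by $\sum\gamma_k=\Theta(\sqrt{K})$. The one place you genuinely diverge is the treatment of the infeasibility penalty $2\gamma_k C\,\mbox{dist}(y_k,X)$: the paper keeps the negative term $-\tfrac{1}{16}\|y_k-x_{k+1}\|^2$ in the recursion, lower-bounds its conditional expectation by $\tfrac{\rho}{16m\eta}\mbox{dist}^2(y_k,X)$ via Lemma~\ref{bd20} (as in \eqref{ml-eq6}), and completes the square to absorb the penalty into an $\mathcal{O}(\gamma_k^2)$ remainder — no rate on $\mathbb{E}[\mbox{dist}(y_k,X)]$ is needed for that term. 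You instead import the rate $\mathbb{E}[\mbox{dist}(x_k,X)]\leq\mathcal{O}(1/\sqrt{k})$ from Theorem~\ref{rpml-fe}(b) and conclude $\gamma_k\mathbb{E}[\mbox{dist}(y_k,X)]=\mathcal{O}(1/k)$. Both yield the same order (and the paper also invokes Theorem~\ref{rpml-fe} via \eqref{st1} for the second-moment terms, so the dependence on the infeasibility result is not avoided either way), but your route has a small unstated step: Theorem~\ref{rpml-fe}(b) controls $\mbox{dist}(x_k,X)$, not $\mbox{dist}(y_k,X)$ with $y_k=2x_k-x_{k-1}$, so you must additionally bound $\mathbb{E}\|x_k-x_{k-1}\|$ by $\mathcal{O}(1/\sqrt{k})$ (which does follow, since $\|x_k-x_{k-1}\|\leq\gamma_{k-1}\|F(y_{k-1},\omega_{k-1})\|+\mbox{dist}(x_{k-1},X_{l_{k-1}})$ and $\mbox{dist}(x_{k-1},X_{l_{k-1}})\leq\mbox{dist}(x_{k-1},X)$); without weak sharpness this cannot be taken for granted, so it should be made explicit. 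The completing-the-square argument of the paper sidesteps this entirely. Part (b) is handled identically in both.
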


\begin{proof}

(a) Invoking \ic{the analysis of} Lemma~\ref{bd}, \ic{without invoking \eqref{bd03}}, we obtain \begin{align}F(x)^T(x-x^*)\ge F(x^*)^T(\Pi_X(x)-x^*)-C\mbox{dist}(x,X), \quad \forall x\in\mathbb{R}^n. \label{gap2} \end{align}
Using this property in \eqref{ml-eq2i} and rewriting it with a similar manner as \eqref{ml-eq3i}, we have
\begin{align}
\notag\|x_{k+1}&-x^*\|^2\le\|x_k-x^* \|^2-\tfrac{3}{4}\|x_{k+1}-x_k \|^2-\tfrac{1}{16}\|y_k-x_{k+1}\|^2-\tfrac{1}{8}\|x_{k-1}-x_{k+1}\|^2\\\notag&+\tfrac{1}{4}\|x_k-x_{k-1}\|^2-2\gamma_kF(x^*)^T(\Pi_X(y_k)-x^*)+2\gamma_kC\mbox{dist}(y_k,X)+32\gamma_k^2L^2\|y_k-x^*\|^2 \\
\notag&+32\gamma_k^2C^2+16\gamma_k^2\|w_k\|^2-2\gamma_kw_k^T(y_k-x^*).
\end{align}
It follows that
\begin{align}
\notag&\|x_{k+1}-x^*\|^2+\tfrac{3}{4}\|x_{k+1}-x_k\|^2 \le \|x_k-x^* \|^2-2\gamma_kF(x^*)^T(\Pi_X(y_k)-x^*)\\\notag&-\tfrac{1}{16}\|y_k-x_{k+1}\|^2-\tfrac{1}{8}\|x_{k-1}-x_{k+1}\|^2+\tfrac{1}{4}\|x_k-x_{k-1}\|^2+2\gamma_kC\mbox{dist}(y_k,X) \\
\notag&+64\gamma_k^2L^2\|x_k-x^*\|^2+64\gamma_k^2L^2\|x_k-x_{k-1}\|^2+32\gamma_k^2C^2+16\gamma_k^2\|w_k\|^2-2\gamma_kw_k^T(y_k-x^*) \\
\notag&\le \|x_k-x^* \|^2+\tfrac{3}{4}\|x_k-x_{k-1}\|^2-2\gamma_kF(x^*)^T(\Pi_X(y_k)-x^*)-\tfrac{1}{2}\|x_k-x_{k-1}\|^2 \\
\notag&+2\gamma_kC\mbox{dist}(y_k,X)-\tfrac{1}{16}\|y_k-x_{k+1}\|^2+64\gamma_k^2L^2\|x_k-x^*\|^2+64\gamma_k^2L^2\|x_k-x_{k-1}\|^2\\\notag&+32\gamma_k^2C^2+16\gamma_k^2\|w_k\|^2-2\gamma_kw_k^T(y_k-x^*).
\end{align}
We have the following inequality by replacing $x^*$ with $y$:
\begin{align}
\notag&2\gamma_kF(y)^T(\Pi_X(y_k)-y)\le\|x_k-y \|^2+\tfrac{3}{4}\|x_k-x_{k-1}\|^2-(\|x_{k+1}-y\|^2+\tfrac{3}{4}\|x_{k+1}-x_k\|^2) \\\notag&+2\gamma_kC\mbox{dist}(y_k,X)-\tfrac{1}{16}\|y_k-x_{k+1}\|^2+64\gamma_k^2L^2\|x_k-y\|^2+64\gamma_k^2L^2\|x_k-x_{k-1}\|^2\\&+32\gamma_k^2C^2+16\gamma_k^2\|w_k\|^2-2\gamma_kw_k^T(y_k-y). \label{gap4}
\end{align}
We now define an auxiliary sequence $\{u_k\}$ such that 
$$ u_{k+1}:= \Pi_X (u_k - \gamma_k w_{k}), $$
where $u_0 \in X$. We may then express the last term on the right in \eqref{gap4} as follows.
\begin{align}\notag
 2\gamma_k w_{k}^T({y}-y_{k}) & = 2\gamma_k w_{k}^T(\vvs{y}-u_k) + 2\gamma_k w_{k}^T(u_k-y_{k}) \\
\label{rl-uk2}
	& \leq \|u_k-y\|^2 - \|u_{k+1}-y\|^2 + \gamma_k^2 \|w_{k}\|^2 + 2\gamma_k w_{k}^T(u_k-y_{k}). 
\end{align}
Summing over $k$ {and invoking \eqref{rl-uk2}}, we obtain the following bound:
\begin{align}
\notag&\sum_{k=\lfloor K/2\rfloor+\bar{k}}^{K+\bar{k}}2\gamma_kF(y)^T(\Pi_X(y_k)-y)\le\|x_0-y \|^2+\tfrac{3}{4}\|x_0-x_{-1}\|^2\\\notag&+\sum_{k=\lfloor K/2\rfloor+\bar{k}}^{K+\bar{k}}(2\gamma_kC\mbox{dist}(y_k,X)-\tfrac{1}{16}\|y_k-x_{k+1}\|^2) \\\notag
&+\sum_{k=\lfloor K/2\rfloor+\bar{k}}^{K+\bar{k}}(64L^2\|x_k-y\|^2+64L^2\|x_k-x_{k-1}\|^2+32C^2)\gamma_k^2+\sum_{k=\lfloor K/2\rfloor+\bar{k}}^{K+\bar{k}}17\gamma_k^2\|w_k\|^2\\&+\|u_0-y\|^2+\sum_{k=\lfloor K/2\rfloor+\bar{k}}^{K+\bar{k}}2\gamma_k w_{k}^T(u_k-y_{k}). \label{rpl-3}
\end{align}
Dividing both sides of \eqref{rpl-3} by $\sum_{k=\lfloor K/2\rfloor+\bar{k}}^{K+\bar{k}}\gamma_k$, we have
\begin{align}
\notag2 &F(y)^T(\bar{y}_{K,\bar k}-y)\le\tfrac{\|x_0-y \|^2+\tfrac{3}{4}\|x_0-x_{-1}\|^2+\|u_0-y\|^2}{\sum_{k=\lfloor K/2\rfloor+\bar{k}}^{K+\bar{k}}\gamma_k}+\tfrac{\sum_{k=\lfloor K/2\rfloor+\bar{k}}^{K+\bar{k}}\left(2\gamma_k C{\scriptsize \mbox{dist}}(y_k,X)-\tfrac{1}{16}\|y_k-x_{k+1}\|^2\right)}{\sum_{k=\lfloor K/2\rfloor+\bar{k}}^{K+\bar{k}}\gamma_k} \\
\notag&+\tfrac{\sum_{k=\lfloor K/2\rfloor+\bar{k}}^{K+\bar{k}}(64L^2\|x_k-y\|^2+64L^2\|x_k-x_{k-1}\|^2+32C^2)\gamma_k^2}{\sum_{k=\lfloor K/2\rfloor+\bar{k}}^{K+\bar{k}}\gamma_k}+\tfrac{\sum_{k=\lfloor K/2\rfloor+\bar{k}}^{K+\bar{k}}17\gamma_k^2\|w_k\|^2+\sum_{k=\lfloor K/2\rfloor+\bar{k}}^{K+\bar{k}}2\gamma_k w_{k}^T(u_k-y_{k})}{\sum_{k=\lfloor K/2\rfloor+\bar{k}}^{K+\bar{k}}\gamma_k} \\
\notag&\le\tfrac{B_1}{\sum_{k=\lfloor K/2\rfloor+\bar{k}}^{K+\bar{k}}\gamma_k}+\tfrac{\sum_{k=\lfloor K/2\rfloor+\bar{k}}^{K+\bar{k}}\left(2\gamma_k C{\scriptsize \mbox{dist}}(y_k,X)-\tfrac{1}{16}\|y_k-x_{k+1}\|^2\right)}{\sum_{k=\lfloor K/2\rfloor+\bar{k}}^{K+\bar{k}}\gamma_k} \\
\notag&+\tfrac{\sum_{k=\lfloor K/2\rfloor+\bar{k}}^{K+\bar{k}}(64L^2\|x_k-y\|^2+64L^2\|x_k-x_{k-1}\|^2+32C^2)\gamma_k^2}{\sum_{k=\lfloor K/2\rfloor+\bar{k}}^{K+\bar{k}}\gamma_k}+\tfrac{\sum_{k=\lfloor K/2\rfloor+\bar{k}}^{K+\bar{k}}17\gamma_k^2\|w_k\|^2+\sum_{k=\lfloor K/2\rfloor+\bar{k}}^{K+\bar{k}}2\gamma_k w_{k}^T(u_k-y_{k})}{\sum_{k=\lfloor K/2\rfloor+\bar{k}}^{K+\bar{k}}\gamma_k} \\
\notag&\le\tfrac{B_1}{\sum_{k=\lfloor K/2\rfloor+\bar{k}}^{K+\bar{k}}\gamma_k}+\tfrac{\sum_{k=\lfloor K/2\rfloor+\bar{k}}^{K+\bar{k}}\left(2\gamma_k C{\scriptsize \mbox{dist}}(y_k,X)-\tfrac{1}{16}\|y_k-x_{k+1}\|^2\right)}{\sum_{k=\lfloor K/2\rfloor+\bar{k}}^{K+\bar{k}}\gamma_k} \\
\notag&+\tfrac{\sum_{k=\lfloor K/2\rfloor+\bar{k}}^{K+\bar{k}}(128L^2({\scriptsize \mbox{dist}}^2(x_k,X)+D_X^2)+64L^2\|x_k-x_{k-1}\|^2+32C^2)\gamma_k^2}{\sum_{k=\lfloor K/2\rfloor+\bar{k}}^{K+\bar{k}}\gamma_k}\\\notag&+\tfrac{\sum_{k=\lfloor K/2\rfloor+\bar{k}}^{K+\bar{k}}17\gamma_k^2\|w_k\|^2+\sum_{k=\lfloor K/2\rfloor+\bar{k}}^{K+\bar{k}}2\gamma_k w_{k}^T(u_k-y_{k})}{\sum_{k=\lfloor K/2\rfloor+\bar{k}}^{K+\bar{k}}\gamma_k},
\end{align}
where $\|x_0-y \|^2+\tfrac{3}{4}\|x_0-x_{-1}\|^2+\|u_0-y\|^2\le2\|x_0-x^* \|^2+2\|x^*-y \|^2+\tfrac{3}{4}\|x_0-x_{-1}\|^2+2\|u_0-x^* \|^2+2\|x^*-y \|^2\le2\|x_0-x^* \|^2+\tfrac{3}{4}\|x_0-x_{-1}\|^2+2\|u_0-x^* \|^2+2D_X^2\triangleq B_1$. By taking supremum over $y \in X$, we obtain the following inequality:
\begin{align} \notag G(&\bar{y}_{K,\bar k}) \triangleq 
\sup_{y \in X} F(y)^T(\bar{y}_{K,\bar k}-y) \le\tfrac{B_1}{2\sum_{k=\lfloor K/2\rfloor+\bar{k}}^{K+\bar{k}}\gamma_k}+\tfrac{\sum_{k=\lfloor K/2\rfloor+\bar{k}}^{K+\bar{k}}\left(2\gamma_k C{\scriptsize \mbox{dist}}(y_k,X)-\tfrac{1}{16}\|y_k-x_{k+1}\|^2\right)}{2\sum_{k=\lfloor K/2\rfloor+\bar{k}}^{K+\bar{k}}\gamma_k}  \\
\notag&+\tfrac{\sum_{k=\lfloor K/2\rfloor+\bar{k}}^{K+\bar{k}}(128L^2({\scriptsize \mbox{dist}}^2(x_k,X)+D_X^2)+64L^2\|x_k-x_{k-1}\|^2+32C^2)\gamma_k^2}{2\sum_{k=\lfloor K/2\rfloor+\bar{k}}^{K+\bar{k}}\gamma_k}\\&+\tfrac{\sum_{k=\lfloor K/2\rfloor+\bar{k}}^{K+\bar{k}}17\gamma_k^2\|w_k\|^2+\sum_{k=\lfloor K/2\rfloor+\bar{k}}^{K+\bar{k}}2\gamma_k w_{k}^T(u_k-y_{k})}{2\sum_{k=\lfloor K/2\rfloor+\bar{k}}^{K+\bar{k}}\gamma_k}. \label{rpl-5}
\end{align}
Taking unconditional expectation and using \eqref{ml-eq6} in \eqref{rpl-5}, we have
\begin{align}
\notag \mathbb{E}[G(\bar{y}_{K,\bar k})] &\le\tfrac{B_1}{2\sum_{k=\lfloor K/2\rfloor+\bar{k}}^{K+\bar{k}}\gamma_k}+\tfrac{\sum_{k=\lfloor K/2\rfloor+\bar{k}}^{K+\bar{k}}\left(2\gamma_k C\mathbb{E}[{\scriptsize \mbox{dist}}(y_k,X)]-\tfrac{\rho}{16m\eta}\mathbb{E}[{\scriptsize \mbox{dist}}^2(y_k,X)]\right)}{2\sum_{k=\lfloor K/2\rfloor+\bar{k}}^{K+\bar{k}}\gamma_k} \\
\notag&+\tfrac{\sum_{k=\lfloor K/2\rfloor+\bar{k}}^{K+\bar{k}}(128L^2(\mathbb{E}[{\scriptsize \mbox{dist}}^2(x_k,X)]+D_X^2)+64L^2\mathbb{E}[\|x_k-x_{k-1}\|^2]+32C^2)\gamma_k^2}{2\sum_{k=\lfloor K/2\rfloor+\bar{k}}^{K+\bar{k}}\gamma_k}\\\notag&+\tfrac{\sum_{k=\lfloor K/2\rfloor+\bar{k}}^{K+\bar{k}}\left(17\gamma_k^2(32\nu_1^2\mathbb{E}[{\scriptsize \mbox{dist}}^2(x_k,X)]+8\nu_1^2\mathbb{E}[{\scriptsize \mbox{dist}}^2(x_{k-1},X)]+40\nu_1^2D_X^2+2\nu_1^2\|x^*\|^2+\nu_2^2)\right)}{2\sum_{k=\lfloor K/2\rfloor+\bar{k}}^{K+\bar{k}}\gamma_k}\\
\notag&\le\tfrac{B_1}{2\sum_{k=\lfloor K/2\rfloor+\bar{k}}^{K+\bar{k}}\gamma_k}+\tfrac{\sum_{k=\lfloor K/2\rfloor+\bar{k}}^{K+\bar{k}}\left(\tfrac{16m\eta C^2}{\rho}\gamma_k^2\right)}{2\sum_{k=\lfloor K/2\rfloor+\bar{k}}^{K+\bar{k}}\gamma_k} \\
\notag&+\tfrac{\sum_{k=\lfloor K/2\rfloor+\bar{k}}^{K+\bar{k}}(128L^2(\mathbb{E}[{\scriptsize \mbox{dist}}^2(x_k,X)]+D_X^2)+64L^2\mathbb{E}[\|x_k-x_{k-1}\|^2]+32C^2)\gamma_k^2}{2\sum_{k=\lfloor K/2\rfloor+\bar{k}}^{K+\bar{k}}\gamma_k}\\\notag&+\tfrac{\sum_{k=\lfloor K/2\rfloor+\bar{k}}^{K+\bar{k}}\left(17\gamma_k^2(32\nu_1^2\mathbb{E}[{\scriptsize \mbox{dist}}^2(x_k,X)]+8\nu_1^2\mathbb{E}[{\scriptsize \mbox{dist}}^2(x_{k-1},X)]+40\nu_1^2D_X^2+2\nu_1^2\|x^*\|^2+\nu_2^2)\right)}{2\sum_{k=\lfloor K/2\rfloor+\bar{k}}^{K+\bar{k}}\gamma_k} \\
\notag&\overset{\tiny\mbox{From }\eqref{st1}}{\le}\tfrac{B_1}{2\sum_{k=\lfloor K/2\rfloor+\bar{k}}^{K+\bar{k}}\gamma_k}+\tfrac{\tfrac{16m\eta C^2}{\rho}\sum_{k=\lfloor K/2\rfloor+\bar{k}}^{K+\bar{k}}\gamma_k^2}{2\sum_{k=\lfloor K/2\rfloor+\bar{k}}^{K+\bar{k}}\gamma_k}+\tfrac{\sum_{k=\lfloor K/2\rfloor+\bar{k}}^{K+\bar{k}}(512L^2((a+\bar{a})/\bar{k})+320D_X^2+32C^2)\gamma_k^2}{2\sum_{k=\lfloor K/2\rfloor+\bar{k}}^{K+\bar{k}}\gamma_k}\\&+\tfrac{\left(17(40\nu_1^2(\bar{b}/\bar{k})+40\nu_1^2D_X^2+2\nu_1^2\|x^*\|^2+\nu_2^2)\right)\sum_{k=\lfloor K/2\rfloor+\bar{k}}^{K+\bar{k}}\gamma_k^2}{2\sum_{k=\lfloor K/2\rfloor+\bar{k}}^{K+\bar{k}}\gamma_k}. \label{bd11}
\end{align}
We now leverage  the following lower bound on the denominator for $K
\geq 1$:
\begin{align}
& \quad\sum_{k=\lfloor K/2\rfloor+\bar{k}}^{K+\bar{k}}k^{-\frac{1}{2}}\ge\int_{K/2+\bar{k}}^{K+\bar{k}}(x+1)^{-\frac{1}{2}} \ dx=2\sqrt{K+\bar{k}+1}-2\sqrt{K/2+\bar{k}+1} 
\ge\tfrac{\sqrt{K}}{(2\sqrt{\bar{k}+2})}.\label{low-bd2}
\end{align}
Similarly an upper bound may be constructed:
\begin{align}
\vvs{\sum_{k=\lfloor K/2\rfloor+\bar{k}}^{K+\bar{k}}\gamma_k^2} = 
\sum_{k=\lfloor K/2\rfloor+\bar{k}}^{K+\bar{k}}k^{-1}\le\int_{K/2+\bar{k}}^{K+\bar{k}}x^{-1} \ dx+\tfrac{1}{\lfloor K/2\rfloor+\bar{k}}\le\log{2}+1.\label{upp-bd}
\end{align}
By substituting \eqref{low-bd2} and \eqref{upp-bd} in \eqref{bd11},
	we obtain that the following holds:
\begin{align}
\notag \mathbb{E}[G(\bar{y}_{K,\bar k})] &\le \mathcal{O}\left(\tfrac{1}{\sqrt{K}}\right).
\end{align}
(b) From (a), we know that $K_\epsilon=\mathcal{O}(1/\epsilon^2)$ to ensure that $\mathbb{E}[G(\bar{y}_{K,\bar k})]\le\mathcal{O}\left(\tfrac{1}{\sqrt{K}}\right)\le\epsilon$. It follows that
\begin{align*}
\sum_{k=1}^{K_\epsilon}1={K_\epsilon}=\mathcal{O}\left(\tfrac{1}{\epsilon^2}\right).
\end{align*}

\end{proof}

\subsection{{\bf SSE} with random projections}\label{sec:4.3}
We now proceed to provide an analogous set of statements for the SSE scheme with random projections.
\begin{proposition}\label{rpcgr} \em
Let Assumptions {\ref{lip-mon} -- \ref{rl}} hold. {Suppose the steplength
sequence $\{\gamma_k\}$ satisfies $\sum_{k=0}^{\infty} \gamma_k = \infty$, 
$\sum_{k=0}^{\infty} \gamma^2_k < \infty$}, and $\gamma_k\le
\frac{1}{2{\sqrt{L^2+2\nu_1^2}}}$. Then any sequence generated by ({\bf
r-SSE}) converges to a solution $x^*\in X$ in an a.s. sense.  
\end{proposition}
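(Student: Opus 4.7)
The plan is to mirror the analysis of Proposition~\ref{egcgr} (for \textbf{v-SSE}), but with two modifications: (i) the batch-averaged noise $\bar w_k,\bar w_{k+1/2}$ is replaced by single-sample noise $w_k,w_{k+1/2}$, whose accumulation is now controlled by $\sum_k \gamma_k^2 < \infty$ instead of variance reduction; and (ii) the half-space $C_k$ still contains $X$, since $x_{k+1/2}=\Pi_{l_k}(x_k-\gamma_k F(x_k,\omega_k))$ gives $X_{l_k}\subseteq C_k$ and $X\subseteq X_{l_k}$, so $x^*\in C_k$ and Lemma~\ref{project}(ii) applies. Carrying out the same chain of manipulations used to arrive at \eqref{eq251}, I obtain
\[
\|x_{k+1}-x^*\|^2 \le \|x_k-x^*\|^2 - (1-2\gamma_k^2 L^2)\|x_k-x_{k+1/2}\|^2 + 2\gamma_k^2\|w_k-w_{k+1/2}\|^2 - 2\gamma_k F(x_{k+1/2})^T(x_{k+1/2}-x^*) + 2\gamma_k w_{k+1/2}^T(x^* - x_{k+1/2}).
\]

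Because $x_{k+1/2}\in X_{l_k}$ rather than $X$, weak sharpness cannot be applied directly; instead I invoke Lemma~\ref{bd} at $x_{k+1/2}$ to lower-bound $F(x_{k+1/2})^T(x_{k+1/2}-x^*)$ by $\alpha\,\mbox{dist}(\Pi_X(x_{k+1/2}),X^*)-C\,\mbox{dist}(x_{k+1/2},X)$, and then shift from $\Pi_X(x_{k+1/2})$ to $x_k$ by the triangle inequality (as in the proof of Theorem~\ref{rpml}), absorbing the cross term $2\gamma_k C\,\mbox{dist}(x_{k+1/2},X)$ into small multiples of $\|x_k-x_{k+1/2}\|^2$ plus $\mathcal{O}(\gamma_k^2)$. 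The key to making the random projection work is a lower bound on $\mathbb{E}[\|x_k-x_{k+1/2}\|^2\mid\mathcal{F}_k]$ in terms of $\mbox{dist}^2(x_k,X)$: applying Lemma~\ref{bd0} with $x=x_k-\gamma_k F(x_k,\omega_k)$ and $y=x_k$, I get $\|x_k-\Pi_{l_k}(x_k)\|^2 \le 4\|x_k-x_{k+1/2}\|^2 + 12\gamma_k^2\|F(x_k,\omega_k)\|^2$, and combining with Lemma~\ref{bd20} yields
\[
\mathbb{E}[\|x_k-x_{k+1/2}\|^2\mid\mathcal{F}_k] \ge \tfrac{\rho}{4m\eta}\,\mbox{dist}^2(x_k,X) - 3\gamma_k^2\,\mathbb{E}[\|F(x_k,\omega_k)\|^2\mid\mathcal{F}_k],
\]
and Lemma~\ref{lu} together with Assumption~\ref{moment} bounds the right-most expectation by $\mathcal{O}(\|x_k-x^*\|^2+1)$.

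Taking conditional expectations in the main recursion and invoking Assumption~\ref{moment} to bound $\mathbb{E}[\|w_k\|^2\mid\mathcal{F}_k]$ and $\mathbb{E}[\|w_{k+1/2}\|^2\mid\mathcal{F}_{k+1/2}]$ by $\nu_1^2(\|x_k-x^*\|^2+\|x^*\|^2)+\nu_2^2$ (after $\|x_{k+1/2}\|^2 \le 2\|x_k-x_{k+1/2}\|^2+2\|x_k\|^2$ and a further $2\|x_k-x^*\|^2+2\|x^*\|^2$), the coefficient multiplying $\|x_k-x_{k+1/2}\|^2$ on the right becomes $1-4\gamma_k^2(L^2+2\nu_1^2)$, which is nonnegative under the hypothesis $\gamma_k\le \tfrac{1}{2\sqrt{L^2+2\nu_1^2}}$. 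Combining this with the lower bound on $\mathbb{E}[\|x_k-x_{k+1/2}\|^2\mid\mathcal{F}_k]$ above yields a super-martingale inequality
\[
\mathbb{E}[v_{k+1}\mid\mathcal{F}_k] \le (1+u_k)v_k - \delta_k + \psi_k,\quad v_k\triangleq\|x_k-x^*\|^2,
\]
where $\delta_k$ contains a positive multiple of $\gamma_k\,\mbox{dist}(x_k,X^*)$ and of $\mbox{dist}^2(x_k,X)$, while $u_k,\psi_k = \mathcal{O}(\gamma_k^2)$ are summable.

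Finally, Lemma~\ref{robbins} yields that $\{\|x_k-x^*\|\}$ is a.s. convergent for every $x^*\in X^*$ and that $\sum_k \gamma_k\,\mbox{dist}(x_k,X^*)<\infty$ and $\sum_k \mbox{dist}^2(x_k,X)<\infty$ almost surely. Since $\sum_k \gamma_k=\infty$, a subsequence satisfies $\mbox{dist}(x_k,X^*)\to 0$ a.s., so some limit point $\bar x(\omega)$ lies in $X^*$; then the same Opial-type argument used at the end of Theorem~\ref{rpml} promotes this to convergence of the entire sequence $\{x_k\}$ a.s. to a point of $X^*$. The main obstacle is the bookkeeping in step two above: extracting a usable positive multiple of $\mbox{dist}^2(x_k,X)$ from $-(1-2\gamma_k^2 L^2)\|x_k-x_{k+1/2}\|^2$ while simultaneously absorbing the state-dependent noise term $\nu_1^2\|x_{k+1/2}\|^2$, which is precisely what forces the stepsize threshold $\gamma_k\le \tfrac{1}{2\sqrt{L^2+2\nu_1^2}}$.
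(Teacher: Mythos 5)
Your proposal is correct and follows essentially the same route as the paper's proof: the same single-sample analog of \eqref{eq251}, the same use of Lemma~\ref{bd} followed by the triangle-inequality shift from $\Pi_X(x_{k+\frac{1}{2}})$ to $x_k$, the same absorption of the $\mathcal{O}(\gamma_k)$ infeasibility cross term via the Lemma~\ref{bd20} lower bound and a completion of squares, and the same Robbins--Siegmund plus $\sum_k\gamma_k=\infty$ argument at the end. The only (harmless) deviation is your lower bound on $\mathbb{E}[\|x_k-x_{k+\frac{1}{2}}\|^2\mid\mathcal{F}_k]$, which you route through Lemma~\ref{bd0} at the cost of a weaker constant and an extra $\mathcal{O}(\gamma_k^2)$ error term, whereas the paper obtains it directly from $\|x_k-\Pi_{l_k}(z_k)\|\ge \mbox{dist}(x_k,X_{l_k})=\|x_k-\Pi_{l_k}(x_k)\|$, since $\Pi_{l_k}(z_k)\in X_{l_k}$.
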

\begin{proof}
By Lemma \ref{project}(ii), we have
\begin{align}
\notag\|x_{k+1}-x^*\|^2&\le\|x_k-\gamma_k(F(x_{k+\frac{1}{2}})+w_{k+\frac{1}{2}})-x^*\|^2-\|x_k-\gamma_k(F(x_{k+\frac{1}{2}})+w_{k+\frac{1}{2}})-x_{k+1}\|^2\\
&=\|x_k-x^*\|^2-\|x_k-x_{k+1}\|^2+2\gamma_k(F(x_{k+\frac{1}{2}})+w_{k+\frac{1}{2}})^T(x^*-x_{k+1}).\label{cgr-eq1}
\end{align}
It is clear that
\begin{align}
F(x_{k+\frac{1}{2}})^T(x_{k+1}-x^*)=F(x_{k+\frac{1}{2}})^T(x_{k+1}-x_{k+\frac{1}{2}})+F(x_{k+\frac{1}{2}})^T(x_{k+\frac{1}{2}}-x^*). \label{cgr-eq2}
\end{align}
Using \eqref{cgr-eq2} in \eqref{cgr-eq1}, we obtain
\begin{align}
\notag&\|x_{k+1}-x^*\|^2=\|x_k-x^*\|^2-\|x_k-x_{k+1}\|^2+2\gamma_kF(x_{k+\frac{1}{2}})^T(x_{k+\frac{1}{2}}-x_{k+1})+2\gamma_kw_{k+\frac{1}{2}}^T(x^*-x_{k+1}) \\
\notag&-2\gamma_kF(x_{k+\frac{1}{2}})^T(x_{k+\frac{1}{2}}-x^*) \\
\notag&=\|x_k-x^*\|^2-\|x_k-x_{k+\frac{1}{2}}+x_{k+\frac{1}{2}}-x_{k+1}\|^2+2\gamma_kF(x_{k+\frac{1}{2}})^T(x_{k+\frac{1}{2}}-x_{k+1})\\
\notag&+2\gamma_kw_{k+\frac{1}{2}}^T(x^*-x_{k+1})-2\gamma_kF(x_{k+\frac{1}{2}})^T(x_{k+\frac{1}{2}}-x^*)\\
\notag&=\|x_k-x^*\|^2-\|x_k-x_{k+\frac{1}{2}}\|^2-\|x_{k+\frac{1}{2}}-x_{k+1}\|^2-2(x_k-x_{k+\frac{1}{2}})^T(x_{k+\frac{1}{2}}-x_{k+1}) \\
\notag&+2\gamma_kF(x_{k+\frac{1}{2}})^T(x_{k+\frac{1}{2}}-x_{k+1})+2\gamma_kw_{k+\frac{1}{2}}^T(x^*-x_{k+1})-2\gamma_kF(x_{k+\frac{1}{2}})^T(x_{k+\frac{1}{2}}-x^*)\\
\notag&=\|x_k-x^*\|^2-\|x_k-x_{k+\frac{1}{2}}\|^2-\|x_{k+\frac{1}{2}}-x_{k+1}\|^2+2(x_{k+1}-x_{k+\frac{1}{2}})^T(x_k-\gamma_kF(x_{k+\frac{1}{2}})-x_{k+\frac{1}{2}}) \\
&+2\gamma_kw_{k+\frac{1}{2}}^T(x^*-x_{k+1})-2\gamma_kF(x_{k+\frac{1}{2}})^T(x_{k+\frac{1}{2}}-x^*). \label{gap3}
\end{align}
Employing a similar approach as in Proposition \ref{egcgr}, we obtain that
\begin{align}
\notag\|x_{k+1}-x^*\|^2&\le\|x_k-x^*\|^2-(1-2\gamma_k^2L^2)\|x_k-x_{k+\frac{1}{2}}\|^2+2\gamma_k^2\|w_k-w_{k+\frac{1}{2}}\|^2 \\
\notag&+2\gamma_kw_{k+\frac{1}{2}}^T(x^*-x_{k+\frac{1}{2}})-2\gamma_kF(x_{k+\frac{1}{2}})^T(x_{k+\frac{1}{2}}-x^*) \\
\notag&\le\|x_k-x^*\|^2-(1-2\gamma_k^2L^2)\|x_k-x_{k+\frac{1}{2}}\|^2+2\gamma_k^2\|w_k-w_{k+\frac{1}{2}}\|^2 \\
&+2\gamma_kw_{k+\frac{1}{2}}^T(x^*-x_{k+\frac{1}{2}})-2\gamma_k\alpha\mbox{dist}\left(\Pi_X(x_{k+\frac{1}{2}}),X^*\right)+2\gamma_kC\mbox{dist}(x_{k+\frac{1}{2}},X). \label{cgr-eq4}
\end{align}
Invoking weak sharpness property, we have
\begin{align}
-2\gamma_k\alpha\mbox{dist}\left(\Pi_X(x_{k+\frac{1}{2}}),X^*\right) \le -2\gamma_k\alpha\mbox{dist}\left(x_k,X^*\right)+2\gamma_k\alpha\|x_k-x_{k+\frac{1}{2}}\|+2\gamma_k\alpha \mbox{dist}(x_{k+\frac{1}{2}},X) \label{weak1}
\end{align}
and
\begin{align}
\notag2\gamma_k(C+\alpha)\mbox{dist}(x_{k+\frac{1}{2}},X) &\le 2\gamma_k(C+\alpha)\mbox{dist}(x_k,X)+2\gamma_k(C+\alpha)\|x_k-x_{k+\frac{1}{2}}\| \\
&\le 2\gamma_k(C+\alpha)\mbox{dist}(x_k,X)+4\gamma_k^2(C+\alpha)^2+\tfrac{1}{4}\|x_k-x_{k+\frac{1}{2}}\|^2, \label{weak2}
\end{align}
Using \eqref{weak1} and \eqref{weak2} in \eqref{cgr-eq4}, we obtain
\begin{align}
\notag&\|x_{k+1}-x^*\|^2\le\|x_k-x^*\|^2-(1-2\gamma_k^2L^2)\|x_k-x_{k+\frac{1}{2}}\|^2+2\gamma_k^2\|w_k-w_{k+\frac{1}{2}}\|^2 \\
\notag&+2\gamma_kw_{k+\frac{1}{2}}^T(x^*-x_{k+\frac{1}{2}})-2\gamma_k\alpha\mbox{dist}\left(\Pi_X(x_{k+\frac{1}{2}}),X^*\right)+2\gamma_kC\mbox{dist}(x_{k+\frac{1}{2}},X) \\ 
\notag&\le \|x_k-x^*\|^2-(1-2\gamma_k^2L^2)\|x_k-x_{k+\frac{1}{2}}\|^2+2\gamma_k^2\|w_k-w_{k+\frac{1}{2}}\|^2 \\
\notag&+2\gamma_kw_{k+\frac{1}{2}}^T(x^*-x_{k+\frac{1}{2}})-2\gamma_k\alpha\mbox{dist}\left(x_k,X^*\right)+2\gamma_k\alpha\|x_k-x_{k+\frac{1}{2}}\|+2\gamma_k(C+\alpha)\mbox{dist}(x_{k+\frac{1}{2}},X) \\
\notag&\le \|x_k-x^*\|^2-(1-2\gamma_k^2L^2)\|x_k-x_{k+\frac{1}{2}}\|^2+2\gamma_k^2\|w_k-w_{k+\frac{1}{2}}\|^2 +2\gamma_kw_{k+\frac{1}{2}}^T(x^*-x_{k+\frac{1}{2}}) \\
\notag&-2\gamma_k\alpha\mbox{dist}\left(x_k,X^*\right)+2\gamma_k\alpha\|x_k-x_{k+\frac{1}{2}}\|+2\gamma_k(C+\alpha)\mbox{dist}(x_k,X)+4\gamma_k^2(C+\alpha)^2+\tfrac{1}{4}\|x_k-x_{k+\frac{1}{2}}\|^2 \\
\notag&\le \|x_k-x^*\|^2-2\gamma_k\alpha\mbox{dist}\left(x_k,X^*\right)-\left(\tfrac{5}{8}-2\gamma_k^2L^2\right)\|x_k-x_{k+\frac{1}{2}}\|^2-\tfrac{1}{8}{(\|x_k-x_{k+\frac{1}{2}}\|-8\gamma_k\alpha)}^2 \\
\notag&+8\gamma_k^2\alpha^2+4\gamma_k^2(C+\alpha)^2+2\gamma_k(C+\alpha)\mbox{dist}(x_k,X)+2\gamma_k^2\|w_{k+\frac{1}{2}}-w_k\|^2-2\gamma_kw_{k+\frac{1}{2}}^T(x_{k+\frac{1}{2}}-x^*).
\end{align}
Taking expectations conditioned on $\mathcal{F}_{k}$, we obtain
\begin{align}
\notag\mathbb{E}&[\|x_{k+1}-x^*\|^2\mid\mathcal{F}_{k}] \le \|x_k-x^*\|^2-2\gamma_k\alpha\mbox{dist}\left(x_k,X^*\right)-\left(\tfrac{5}{8}-2\gamma_k^2L^2\right)\mathbb{E}[\|x_k-x_{k+\frac{1}{2}}\|^2\mid\mathcal{F}_{k}] \\
\notag&+8\gamma_k^2\alpha^2+4\gamma_k^2(C+\alpha)^2+2\gamma_k(C+\alpha)\mbox{dist}(x_k,X)+{2\gamma_k^2(\nu_1^2\mathbb{E}[\|x_{k+\frac{1}{2}}\|^2\mid\mathcal{F}_{k}]+\nu_1^2\|x_k\|^2+2\nu_2^2)} \\
\notag&\le \|x_k-x^*\|^2-2\gamma_k\alpha\mbox{dist}\left(x_k,X^*\right)-\left(\tfrac{5}{8}-2\gamma_k^2L^2\right)\mathbb{E}[\|x_k-x_{k+\frac{1}{2}}\|^2\mid\mathcal{F}_{k}] \\
\notag&+8\gamma_k^2\alpha^2+4\gamma_k^2(C+\alpha)^2+2\gamma_k(C+\alpha)\mbox{dist}(x_k,X)\\\notag&+{2\gamma_k^2(2\nu_1^2\mathbb{E}[\|x_k-x_{k+\frac{1}{2}}\|^2\mid\mathcal{F}_{k}]+3\nu_1^2\|x_k\|^2+2\nu_2^2)}\\
\notag&\le \|x_k-x^*\|^2-2\gamma_k\alpha\mbox{dist}\left(x_k,X^*\right)-\left(\tfrac{5}{8}-2\gamma_k^2L^2\right)\mathbb{E}[\|x_k-x_{k+\frac{1}{2}}\|^2\mid\mathcal{F}_{k}] \\
\notag&+8\gamma_k^2\alpha^2+4\gamma_k^2(C+\alpha)^2+2\gamma_k(C+\alpha)\mbox{dist}(x_k,X)\\\notag&+{2\gamma_k^2(2\nu_1^2\mathbb{E}[\|x_k-x_{k+\frac{1}{2}}\|^2\mid\mathcal{F}_{k}]+6\nu_1^2\|x_k-x^*\|^2+6\nu_1^2\|x^*\|^2+2\nu_2^2)}\\
\notag&={(1+12\gamma_k^2\nu_1^2)} \|x_k-x^*\|^2-2\gamma_k\alpha\mbox{dist}\left(x_k,X^*\right)-\left(\tfrac{5}{8}-2\gamma_k^2(L^2{+2\nu_1^2})\right)\mathbb{E}[\|x_k-x_{k+\frac{1}{2}}\|^2\mid\mathcal{F}_{k}] \\
&+8\gamma_k^2\alpha^2+4\gamma_k^2(C+\alpha)^2+2\gamma_k(C+\alpha)\mbox{dist}(x_k,X)+{2\gamma_k^2(6\nu_1^2\|x^*\|^2+2\nu_2^2)}. \label{cgr-eq5}
\end{align}
According to Lemma \ref{bd20}, we have
\begin{align}
\notag\mathbb{E}[\|x_k-x_{k+\frac{1}{2}}\|^2\mid\mathcal{F}_{k}]&=\mathbb{E}[\|x_k-\Pi_{l_k}(x_k-\gamma_kF(x_k,\omega_k))\|^2\mid\mathcal{F}_{k}] \\
&\ge \mathbb{E}[\|x_k-\Pi_{l_k}(x_k)\|^2\mid\mathcal{F}_{k}] \ge \tfrac{\rho}{m\eta}\mbox{dist}^2(x_k,X). \label{cgr-eq6}
\end{align}
where the last inequality follows from Lemma \ref{bd0}.
Multiplying \eqref{cgr-eq6} by $\tfrac{1}{8}$ and using it in \eqref{cgr-eq5}, we have
\begin{align}
\notag\mathbb{E}&[\|x_{k+1}-x^*\|^2\mid\mathcal{F}_{k}] \le {(1+12\gamma_k^2\nu_1^2)}\|x_k-x^*\|^2-2\gamma_k\alpha\mbox{dist}\left(x_k,X^*\right)\\\notag&-\left(\tfrac{1}{2}-2\gamma_k^2(L^2{+2\nu_1^2})\right)\mathbb{E}[\|x_k-x_{k+\frac{1}{2}}\|^2\mid\mathcal{F}_{k}]+8\gamma_k^2\alpha^2+4\gamma_k^2(C+\alpha)^2\\\notag&+2\gamma_k(C+\alpha)\mbox{dist}(x_k,X)-\tfrac{\rho}{8m\eta}\mbox{dist}^2(x_k,X)+{2\gamma_k^2(6\nu_1^2\|x^*\|^2+2\nu_2^2)} \\
\notag&={(1+12\gamma_k^2\nu_1^2)}\|x_k-x^*\|^2-2\gamma_k\alpha\mbox{dist}\left(x_k,X^*\right)-\left(\tfrac{1}{2}-2\gamma_k^2(L^2{+2\nu_1^2})\right)\mathbb{E}[\|x_k-x_{k+\frac{1}{2}}\|^2\mid\mathcal{F}_{k}]\\\notag&+8\gamma_k^2\alpha^2+4\gamma_k^2(C+\alpha)^2-\tfrac{\rho}{8m\eta}\left(\mbox{dist}(x_k,X)-\tfrac{8m\eta\gamma_k(C+\alpha)}{\rho}\right)^2\\\notag&+\tfrac{8m\eta (C+\alpha)^2}{\rho}\gamma_k^2+{2\gamma_k^2(6\nu_1^2\|x^*\|^2+2\nu_2^2)}  \\
\notag&\le{(1+12\gamma_k^2\nu_1^2)}\|x_k-x^*\|^2-2\gamma_k\alpha\mbox{dist}\left(x_k,X^*\right)+8\gamma_k^2\alpha^2+4\gamma_k^2(C+\alpha)^2\\&+\tfrac{8m\eta (C+\alpha)^2}{\rho}\gamma_k^2+{2\gamma_k^2(6\nu_1^2\|x^*\|^2+2\nu_2^2)} \label{t-eq7}
\end{align}
\vs{Now we may invoke Lemma \ref{robbins}. It follows that $\{\|x_k-x^* \|^2\}$
is convergent in an a.s. sense and
$\sum_k 2\gamma_k\alpha\mbox{dist}\left(x_k,X^*\right)<\infty$ a.s. . We first 
show that $\mbox{dist}(x_k,X^*) \xrightarrow{k \to \infty} 0$ a.s. .  We proceed
by contradiction and assume that with finite probability,
$\mbox{dist}(x_k,X^*) \to h(\omega)>0$ for $\omega \in V$ where $\mathbb{P}(V) > 0$. Since $\sum_{k} \gamma_k = \infty$, it follows that $\sum_k \gamma_k \mbox{dist}(x_k,X^*)  = \infty$ with finite probability. But this contradicts 
$\sum{2\gamma_k\alpha\mbox{dist}\left(x_k,X^*\right)}<\infty$ a.s., implying that 
$\mbox{dist}\left(x_k,X^*\right) \to 0$ in an a.s. sense.} \vvs{In a similar fashion as in Proposition~\ref{rpml}, we may show that the entire sequence of $\{x_k\}$ is convergent \avs{to a random point in $X^*$}}.
\end{proof}

We continue with an analysis of the infeasibility sequence.
\begin{proposition}\label{rpcgr-fe} \rm
Let Assumptions \ssc{\ref{lip-mon} -- \ref{bd5}, \ref{moment} -- \ref{rl}} hold. 
Suppose $\{x_k\}$ is generated by ({\bf r-SSE}), where the projections are randomly generated. \ssc{In addition, suppose there exists a $D_X > 0$ such that $\|u-v\|^2 \leq D_X^2$ for any $u, v \in X$.} Then the following hold. \\
(a) \vvs{If $\sum_{k} \gamma_k = \infty$ and $\sum_k \gamma_k^2< \infty$}, then $\mbox{dist}(x_K,X) \xrightarrow[a.s.]{k \to \infty}0$. \\
(b) \vvs{Suppose $\gamma_k = 1/k^{t/2}$ where $t \geq 1$.} Then $\mathbb{E}[\mbox{dist}(x_k,X)] \leq \mathcal{O}\left(\frac{1}{k^{t/2}}\right)$ for any $k \geq \bar k$, where 
$$ \bar k \triangleq  \ceil[\bigg]{\left(\tfrac{1}{1- \left(1-\beta/2\right)^{1/t}} -1\right)}.$$ \\
(c)  \vvs{Suppose $\gamma_k = 1/k^{1/2}$} and $\bar x_{K,\bar k} \triangleq \tfrac{\sum_{k=\bar k+\lfloor K/2\rfloor }^{\bar k+K} \gamma_k x_k}{
\sum_{k=\bar k+\lfloor K/2\rfloor}^{K+\bar k} \gamma_k}$. Then 
 $\mathbb{E}[\mbox{dist}(\bar{x}_{K, \bar k},X)] \leq \mathcal{O}\left(\tfrac{1}{\sqrt{K}}\right)$.

\end{proposition}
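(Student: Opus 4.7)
\noindent The argument will parallel that of Theorem~\ref{rpml-fe}, with one structural adjustment: since $x_{k+1}=\Pi_{C_k}(\cdot)$ is projected onto a halfspace that contains $X_{l_k}$ but not $X$ in general, the feasibility-reduction step is now the intermediate $x_{k+1/2}=\Pi_{l_k}(\tilde z_k)$ where $\tilde z_k\triangleq x_k-\gamma_k F(x_k,\omega_k)$. My plan is first to control $\mbox{dist}^2(x_{k+1/2},X)$ along the lines of \eqref{fe1}--\eqref{fe4}, then to propagate feasibility from $x_{k+1/2}$ to $x_{k+1}$ via Lemma~\ref{bd0} at an $\mathcal{O}(\gamma_k^2)$ cost.

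\noindent For part (a), Lemma~\ref{bd0} gives $\mbox{dist}^2(x_{k+1},X)\le 2\,\mbox{dist}^2(x_{k+1/2},X)+8\|x_{k+1}-x_{k+1/2}\|^2$. For the first summand, since $\Pi_X(\tilde z_k)\in X\subseteq X_{l_k}$, I apply Lemma~\ref{project}(ii) to $\Pi_{l_k}$, insert a free $\theta$-split as in \eqref{fe2}, and invoke Lemma~\ref{bd20} to generate a contraction $(1-\rho/(4\theta m\eta))\mbox{dist}^2(x_k,X)$ plus a term of order $\gamma_k^2\|F(x_k,\omega_k)\|^2$. For the second summand, since $x_{k+1/2}\in C_k$ by the very definition of $C_k$, nonexpansiveness of $\Pi_{C_k}$ yields
\begin{align*}
\|x_{k+1}-x_{k+1/2}\|^2 \le \|z_k-x_{k+1/2}\|^2 \le 2\|x_k-x_{k+1/2}\|^2+2\gamma_k^2\|F(x_{k+1/2},\omega_{k+1/2})\|^2,
\end{align*}
where $z_k\triangleq x_k-\gamma_k F(x_{k+1/2},\omega_{k+1/2})$; the cross term $\|x_k-x_{k+1/2}\|^2$ splits (again via Lemma~\ref{bd0}) into $\mathcal{O}(\gamma_k^2)$ map-norms plus $\mbox{dist}^2(\tilde z_k,X)$, with the latter dominated by $\mbox{dist}^2(x_k,X)$ and $\gamma_k^2\|F(x_k,\omega_k)\|^2$ through one more $\theta$-split. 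Bounding map-norms via Lemma~\ref{lu} and Assumption~\ref{moment}, and replacing $\|x_k-x^*\|^2$ by $2\,\mbox{dist}^2(x_k,X)+2D_X^2$, I obtain for $\gamma_0$ sufficiently small a recursion of the form
\begin{align*}
\mathbb{E}\!\left[\mbox{dist}^2(x_{k+1},X)+\tfrac{3\rho}{32\theta m\eta}\mbox{dist}^2(x_k,X)\mid\mathcal{F}_k\right] \le \left(1-\tfrac{\rho}{32\theta m\eta}\right)\!\left(\mbox{dist}^2(x_k,X)+\tfrac{3\rho}{32\theta m\eta}\mbox{dist}^2(x_{k-1},X)\right)+R\gamma_k^2
\end{align*}
for some $R>0$, i.e.\ the direct analog of \eqref{fe314}; Lemma~\ref{robbins2} combined with $\sum\gamma_k^2<\infty$ then yields (a).

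\noindent Parts (b) and (c) then follow essentially verbatim from the template of Theorem~\ref{rpml-fe}(b)--(c): take unconditional expectations above, verify $\gamma_{k+1}^2\ge(1-\beta/2)\gamma_k^2$ for $k\ge\bar k$ with $\beta\triangleq\rho/(32\theta m\eta)$, apply Lemma~\ref{wang} and then Lemma~\ref{lem:geom_poly} to convert the geometric transient into the polynomial rate $\mathbb{E}[\mbox{dist}^2(x_k,X)]\le \mathcal{O}(1/k^t)$, and conclude (b) by Jensen's inequality. For (c), a second Jensen application to $\bar x_{K,\bar k}$ together with the bound \eqref{low-bd2} yields the claimed $\mathcal{O}(1/\sqrt K)$ rate. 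The main obstacle I anticipate is the coupling in the bound on $\|x_{k+1}-x_{k+1/2}\|^2$: the $\mbox{dist}^2(\tilde z_k,X)$ term feeds back into $\mbox{dist}^2(x_k,X)$, so the parameter $\theta$ and the upper bound on $\gamma_0$ must be chosen carefully so that, after absorbing this extra coupling, the net contraction coefficient stays strictly inside $(0,1)$ while the $\mathcal{O}(\gamma_k^2)$ noise coefficient remains finite.
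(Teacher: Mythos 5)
Your high-level plan for part (a) — contract $\mbox{dist}^2(x_{k+1/2},X)$ through the random projection and then pay an extra price to pass from $x_{k+1/2}$ to $x_{k+1}$ — is exactly the structure of the paper's proof, and your treatment of (b) and (c) via Lemma~\ref{wang}, Lemma~\ref{lem:geom_poly}, Jensen, and the bound \eqref{low-bd2} is the same as the paper's. The gap is in the single step you use to propagate feasibility. Applying Lemma~\ref{bd0} at the top level gives $\mbox{dist}^2(x_{k+1},X)\le 2\,\mbox{dist}^2(x_{k+1/2},X)+8\|x_{k+1}-x_{k+1/2}\|^2$, and the fixed constant $2$ is fatal: the only contraction available from the random projection is $1-\tfrac{\rho}{4\theta m\eta}$ (or $1-\tfrac{\rho}{8\theta m\eta}$ after absorbing the $\gamma_k^2$ perturbations), and since $\rho\le 1$, $m\ge 1$, $\theta\ge 1$, and $\eta\ge 1$ (Assumption~\ref{ita} forces $\eta\ge1$ because $X\subseteq X_i$), this factor is at least $3/4$, so $2\bigl(1-\tfrac{\rho}{4\theta m\eta}\bigr)\ge 3/2>1$. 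No choice of $\theta$ or $\gamma_0$ repairs this, because neither parameter touches the constant $2$, and enlarging $\theta$ only weakens the contraction further. Your own bound on $\|x_{k+1}-x_{k+1/2}\|^2$ compounds the problem: it reintroduces $\|x_k-x_{k+1/2}\|^2\gtrsim \mbox{dist}^2(x_k,X)$ with coefficient $16$, so the net coefficient on $\mbox{dist}^2(x_k,X)$ is far above $1$ and the recursion you announce, with contraction factor $1-\tfrac{\rho}{32\theta m\eta}$, cannot be reached along this route. You anticipate a "coupling" issue at the end, but you misdiagnose it as a tuning problem rather than a structural one.

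The paper avoids both difficulties with an asymmetric Peter--Paul split in place of Lemma~\ref{bd0}: it writes $\|a+b\|^2\le\bigl(1+\tfrac{16\theta m\eta}{\rho}\bigr)\|a\|^2+\bigl(1+\tfrac{\rho}{16\theta m\eta}\bigr)\|b\|^2$ with $b=x_{k+1/2}-\Pi_X(x_{k+1/2})$ and $a=x_{k+1}-x_{k+1/2}$, so the distance term carries a weight only slightly above $1$ and the product $\bigl(1+\tfrac{\rho}{16\theta m\eta}\bigr)\bigl(1-\tfrac{\rho}{8\theta m\eta}\bigr)$ remains strictly below $1$; the large weight $1+\tfrac{16\theta m\eta}{\rho}$ lands harmlessly on $\|x_{k+1}-x_{k+1/2}\|^2$, which the paper bounds by $\|z_k-x_k\|^2=\gamma_k^2\|F(x_{k+1/2},\omega_{k+1/2})\|^2$ (treating $x_{k+1/2}$ as $\Pi_{C_k}(x_k)$ and using nonexpansiveness), i.e., purely $\mathcal{O}(\gamma_k^2)$ with no feedback into $\mbox{dist}^2(x_k,X)$. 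To fix your argument, replace the Lemma~\ref{bd0} step by such an $\epsilon$-weighted split with $\epsilon=\mathcal{O}(\rho/(\theta m\eta))$ on the distance term, and bound $\|x_{k+1}-x_{k+1/2}\|$ so that it contains no $\mbox{dist}(x_k,X)$ contribution.
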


\begin{proof}
(a) Let $z_k=x_k-\gamma_kF(x_{k+\frac{1}{2}},\omega_{k+\frac{1}{2}})$.
Choose $\theta \ge \max\left\{1,\tfrac{\rho}{m\eta}\right\}$. We have
\begin{align}
\notag \mbox{dist}^2(x_{k+1},X) &\le \|x_{k+1}-\Pi_X(x_{k+\frac{1}{2}})\|^2 = \|\Pi_{T_k}(z_k)-x_{k+\frac{1}{2}}+x_{k+\frac{1}{2}}-\Pi_X(x_{k+\frac{1}{2}})\|^2 \\ 
\notag &\le \left(1+\tfrac{16\theta m\eta}{\rho}\right)\|\Pi_{C_k}(z_k)-x_{k+\frac{1}{2}}\|^2+\left(1+\tfrac{\rho}{16\theta m\eta}\right)\|x_{k+\frac{1}{2}}-\Pi_X(x_{k+\frac{1}{2}})\|^2 \\
\notag &=\left(1+\tfrac{16\theta m\eta}{\rho}\right)\|\Pi_{C_k}(z_k)-\Pi_{l_k}(x_k)\|^2+\left(1+\tfrac{\rho}{16\theta m\eta}\right)\|x_{k+\frac{1}{2}}-\Pi_X(x_{k+\frac{1}{2}})\|^2 \\
\notag &=\left(1+\tfrac{16\theta m\eta}{\rho}\right)\|\Pi_{C_k}(z_k)-\Pi_{C_k}(x_k)\|^2+\left(1+\tfrac{\rho}{16\theta m\eta}\right)\|x_{k+\frac{1}{2}}-\Pi_X(x_{k+\frac{1}{2}})\|^2 \\
&\le \left(1+\tfrac{16\theta m\eta}{\rho}\right)\|z_k-x_k\|^2+\left(1+\tfrac{\rho}{16\theta m\eta}\right)\|x_{k+\frac{1}{2}}-\Pi_X(x_{k+\frac{1}{2}})\|^2, \label{fe10}
\end{align}
where we leverage $\|a+b\|^2 \le \left(1+\tfrac{16\theta m\eta}{\rho}\right)\|a\|^2+\left(1+\tfrac{\rho}{16\theta m\eta}\right)\|b\|^2$.
From \eqref{fe10}, we can deduce
\begin{align}
\notag \mathbb{E}[\mbox{dist}^2(x_{k+1},X)\mid\mathcal{F}_k] &\le \left(1+\tfrac{16\theta m\eta}{\rho}\right)\|\gamma_k\mathbb{E}[F(x_{k+\frac{1}{2}},\omega_{k+\frac{1}{2}})\mid\mathcal{F}_k]\|^2+\left(1+\tfrac{\rho}{16\theta m\eta}\right)\mathbb{E}[\mbox{dist}^2(x_{k+\frac{1}{2}},X)\mid\mathcal{F}_k] \\
\notag&\le \left(1+\tfrac{16\theta m\eta}{\rho}\right)\gamma_k^2(4(L^2+\nu_1^2)\|x_{k+\frac{1}{2}}-x^*\|^2+4\|F(x^*)\|^2+4\nu_1^2\|x^*\|^2+2\nu_2^2)\\\notag&+\left(1+\tfrac{\rho}{16\theta m\eta}\right)\mathbb{E}[\mbox{dist}^2(x_{k+\frac{1}{2}},X)\mid\mathcal{F}_k] \\
\notag&\le \left(1+\tfrac{16\theta m\eta}{\rho}\right)\gamma_k^2(8(L^2+\nu_1^2)D_X^2+4C^2+4\nu_1^2\|x^*\|^2+2\nu_2^2)\\\notag&+\left(1+\tfrac{\rho}{16\theta m\eta}+8\left(1+\tfrac{16\theta m\eta}{\rho}\right)(L^2+\nu_1^2)\gamma_k^2\right)\mathbb{E}[\mbox{dist}^2(x_{k+\frac{1}{2}},X)\mid\mathcal{F}_k].
\end{align}
Suppose $8\left(5+\tfrac{16\theta m\eta}{\rho}\right)(L^2+\nu_1^2)\gamma_k^2\le\tfrac{\rho}{16\theta m\eta}$. Then we have
\begin{align}
\notag\mathbb{E}[\mbox{dist}^2(x_{k+1},X)\mid\mathcal{F}_k] &\le\left(1+\tfrac{16\theta m\eta}{\rho}\right)\gamma_k^2(8(L^2+\nu_1^2)D_X^2+4C^2+4\nu_1^2\|x^*\|^2+2\nu_2^2)\\&+\left(1+\tfrac{\rho}{8\theta m\eta}\right)\mathbb{E}[\mbox{dist}^2(x_{k+\frac{1}{2}},X)\mid\mathcal{F}_k]. \label{fe101}
\end{align}
We can bound the second term using a similar way with \eqref{st2} as follows:
\begin{align}
\notag\mathbb{E}[\mbox{dist}^2(x_{k+\frac{1}{2}},X)&\mid\mathcal{F}_k] \le \left(1-\tfrac{\rho}{4\theta m\eta}\right)\mbox{dist}^2(x_k,X)+\left(5+\tfrac{4\theta m\eta}{\rho}\right)\|\gamma_k\mathbb{E}[F(x_k,\omega_k)\mid\mathcal{F}_k]\|^2 \\
\notag&\le \left(1-\tfrac{\rho}{4\theta m\eta}\right)\mbox{dist}^2(x_k,X)+\left(5+\tfrac{4\theta m\eta}{\rho}\right)\gamma_k^2(4(L^2+\nu_1^2)\|x_k-x^*\|^2\\&+4\|F(x^*)\|^2+4\nu_1^2\|x^*\|^2+2\nu_2^2)
. \label{fe11}
\end{align}
Using \eqref{dist1} in \eqref{fe11}, it follows that
\begin{align}
\notag\mathbb{E}[\mbox{dist}^2&(x_{k+\frac{1}{2}},X)\mid\mathcal{F}_k] \le\bigg(1-\tfrac{\rho}{4\theta m\eta}+\underbrace{8\left(5+\tfrac{4\theta m\eta}{\rho}\right)(L^2+\nu_1^2)\gamma_k^2}_{\mtiny {\le\tfrac{\rho}{16\theta m\eta}}}\bigg)\mbox{dist}^2(x_k,X)\\\notag&+\left(5+\tfrac{4\theta m\eta}{\rho}\right)\gamma_k^2(8(L^2+\nu_1^2)D_X^2+4C^2+4\nu_1^2\|x^*\|^2+2\nu_2^2) \\
&\le\left(1-\tfrac{\rho}{8\theta m\eta}\right)\mbox{dist}^2(x_k,X)+\left(5+\tfrac{4\theta m\eta}{\rho}\right)\gamma_k^2(8(L^2+\nu_1^2)D_X^2+4C^2+4\nu_1^2\|x^*\|^2+2\nu_2^2). \label{fe111}
\end{align}
Using \eqref{fe111} in \eqref{fe101}, we obtain
\begin{align}
\mathbb{E}[\mbox{dist}^2(x_{k+1},X)\mid\mathcal{F}_k] &\le \left(1-\tfrac{\rho^2}{64\theta^2m^2\eta^2}\right)\mbox{dist}^2(x_k,X)+\hat{D}\gamma_k^2, \label{fe315}
\end{align}
where $\hat{D}\triangleq \left(1+\tfrac{16\theta m\eta}{\rho}+\left(1+\tfrac{\rho}{8\theta m\eta}\right)\left(5+\tfrac{4\theta m\eta}{\rho}\right)\right)(8(L^2+\nu_1^2)D_X^2+4C^2+4\nu_1^2\|x^*\|^2+2\nu_2^2)$.
We may now invoke Lemma \ref{robbins2} \vvs{and the summability of $\gamma^2_k$} to claim $\mbox{dist}^2(x_k,X)\xrightarrow[a.s.]{k \to \infty} 0.$ \\
(b) Let $\beta\triangleq 1-\tfrac{\rho^2}{64\theta^2m^2\eta^2}$. The conclusion holds by using a similar fashion with Proposition \ref{rpml-fe}(b). \\
(c) We can derive the result using \eqref{fen-1}.
\end{proof}

We conclude this section with a rate of convergence of the gap function in terms of the projection of the averaged sequence for ({\bf r-SSE}) and the associated oracle complexity bound.

\begin{proposition}\label{raterpCGR} \em
Let Assumptions \ssc{\ref{lip-mon} -- \ref{bd5}, \ref{moment} -- \ref{rl}} hold. Let $\gamma_k=\tfrac{\gamma_0}{\sqrt{k}}$ and assume $\gamma_0\le\tfrac{1}{2\sqrt{L^2+5\nu_1^2}}$. \ssc{In addition, for any $u, v \in X$, suppose that there exists a $D_X > 0$ such that $\|u-v\|^2 \leq D_X^2$.} Then the following holds for any sequence generated by ({\bf r-SSE}) in an expected value sense, where $\bar{y}_{K,\bar k} \triangleq \tfrac{\sum_{k=\lfloor K/2\rfloor+\bar{k}}^{K+\bar{k}}\gamma_k\Pi_X(x_{k+\frac{1}{2}})}{\sum_{k=\lfloor K/2\rfloor+\bar{k}}^{K+\bar{k}}\gamma_k}$. \\
(a)
$\mathbb{E}[G(\bar{y}_{K,\bar k})]\leq \mathcal{O}\left(\frac{1}{\sqrt{K}}\right)$; \\
(b) The oracle complexity  to compute an $\bar{y}_{K,\bar k}$ such that $\mathbb{E}[\mbox{dist}(\bar{y}_{K,\bar k},X^*)]\le\epsilon$ is bounded \ic{by
$\mathcal{O}\left(\frac{1}{\epsilon^2}\right)$}.
\end{proposition}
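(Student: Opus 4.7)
The plan is to mirror the rate analysis developed for ({\bf r-SPRG}) in Proposition~\ref{raterpml}, adapted to the ({\bf r-SSE}) iteration. I would begin from the one-step inequality \eqref{gap3} from the proof of Proposition~\ref{rpcgr}, but avoid the weak-sharpness step and instead invoke the analog of \eqref{gap2}, namely
\[
F(x_{k+\tfrac12})^T(x_{k+\tfrac12}-x^*) \ \ge\ F(x^*)^T(\Pi_X(x_{k+\tfrac12})-x^*)\ -\ C\,\mbox{dist}(x_{k+\tfrac12},X),
\]
which is obtained by retaining only the monotonicity and the Cauchy--Schwarz steps in the proof of Lemma~\ref{bd}. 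Substituting into \eqref{gap3}, replacing $x^*$ by an arbitrary $y\in X$, and proceeding as in the derivation preceding \eqref{gap4}, I would obtain a bound of the form
\[
2\gamma_k F(y)^T(\Pi_X(x_{k+\tfrac12})-y) \le \|x_k-y\|^2 - \|x_{k+1}-y\|^2 - (1-2\gamma_k^2 L^2)\|x_k-x_{k+\tfrac12}\|^2 + 2\gamma_k\,C\,\mbox{dist}(x_{k+\tfrac12},X) + 2\gamma_k^2\|w_k-w_{k+\tfrac12}\|^2 + 2\gamma_k w_{k+\tfrac12}^T(y-x_{k+\tfrac12}).
\]

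Next, I would introduce the auxiliary sequence $u_{k+1} := \Pi_X(u_k - \gamma_k w_{k+\tfrac12})$ exactly as in \eqref{eq-uk}, allowing the conversion of the inner-product noise term $2\gamma_k w_{k+\tfrac12}^T(y-x_{k+\tfrac12})$ into a telescoping piece $\|u_k-y\|^2-\|u_{k+1}-y\|^2$, plus a $\gamma_k^2\|w_{k+\tfrac12}\|^2$ term and a mean-zero cross term $2\gamma_k w_{k+\tfrac12}^T(u_k-x_{k+\tfrac12})$. Summing from $k=\lfloor K/2\rfloor+\bar k$ to $K+\bar k$, dividing by $\sum \gamma_k$, taking the supremum over $y\in X$ (using $\|u_0-y\|^2+\|x_{\lfloor K/2\rfloor+\bar k}-y\|^2\le 4D_X^2$), and taking unconditional expectations, I would arrive at a bound of the form
\[
\mathbb{E}[G(\bar{y}_{K,\bar k})] \le \tfrac{\mathcal{O}(1)}{\sum_k \gamma_k} + \tfrac{\mathcal{O}(1)\sum_k \gamma_k^2}{\sum_k \gamma_k} + \tfrac{\sum_k \gamma_k\,C\,\mathbb{E}[\mbox{dist}(x_{k+\tfrac12},X)]}{\sum_k \gamma_k} + \tfrac{\sum_k \gamma_k^2\,\mathbb{E}[\|w_k-w_{k+\tfrac12}\|^2+\|w_{k+\tfrac12}\|^2]}{\sum_k \gamma_k},
\]
where the state-dependent second-moment bound in Assumption~\ref{moment} together with $\|x_{k+\tfrac12}\|^2 \le 2\mbox{dist}^2(x_{k+\tfrac12},X) + 2D_X^2 + 2\|x^*\|^2$ controls the noise term.

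To conclude, I would insert the infeasibility estimate $\mathbb{E}[\mbox{dist}^2(x_k,X)],\mathbb{E}[\mbox{dist}^2(x_{k+\tfrac12},X)] \le \mathcal{O}(1/k)$ proven in Proposition~\ref{rpcgr-fe}(b) (available because the windowing starts at $\lfloor K/2\rfloor+\bar k$), and the window bounds
$\sum_{k=\lfloor K/2\rfloor+\bar k}^{K+\bar k}\gamma_k \ge \tfrac{\sqrt{K}}{2\sqrt{\bar k+2}}$ and $\sum_{k=\lfloor K/2\rfloor+\bar k}^{K+\bar k}\gamma_k^2 \le \log 2+1$ from \eqref{low-bd2}--\eqref{upp-bd}. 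Each of the four terms is then at most $\mathcal{O}(1/\sqrt{K})$, yielding (a). For (b), since each iteration of ({\bf r-SSE}) draws a bounded number of samples, enforcing $\mathcal{O}(1/\sqrt{K})\le \epsilon$ gives $K_\epsilon = \mathcal{O}(1/\epsilon^2)$, and the oracle complexity is $\sum_{k=1}^{K_\epsilon} 1 = \mathcal{O}(1/\epsilon^2)$.

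The main obstacle I foresee is not the algebra but rather the proper handling of the $\mbox{dist}(x_{k+\tfrac12},X)$ term: unlike in the ({\bf r-SPRG}) argument, $x_{k+\tfrac12}$ lies on a random set $X_{l_k}$ rather than in $X$, so its infeasibility must be controlled via the one-step bound \eqref{fe111} rather than directly by Proposition~\ref{rpcgr-fe}(b), and the restriction $\gamma_0\le 1/(2\sqrt{L^2+5\nu_1^2})$ enters precisely to keep the recursion \eqref{fe315} contractive so that the $\mathcal{O}(1/k)$ bound propagates to both $x_k$ and $x_{k+\tfrac12}$.
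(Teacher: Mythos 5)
Your proposal is correct and follows essentially the same route as the paper's proof: the gap-function variant of Lemma~\ref{bd} (i.e., \eqref{gap2}) substituted into \eqref{gap3}, the auxiliary sequence $u_{k+1}=\Pi_X(u_k-\gamma_k w_{k+\frac{1}{2}})$ to telescope the noise inner product, windowed averaging over $k\in[\lfloor K/2\rfloor+\bar k,\,K+\bar k]$, and the sum bounds \eqref{low-bd2}--\eqref{upp-bd}. The only (immaterial) deviation is your treatment of the $2\gamma_k C\,\mbox{dist}(x_{k+\frac{1}{2}},X)$ term: the paper first shifts it to $\mbox{dist}(x_k,X)$ via the $\alpha=0$ analog of \eqref{weak2} and then absorbs it by completing the square against the retained $-\tfrac{\rho}{4m\eta}\mathbb{E}[\mbox{dist}^2(x_k,X)]$ term supplied by \eqref{cgr-eq6}, producing an $\mathcal{O}(\gamma_k^2)$ contribution, whereas you bound it directly by propagating the infeasibility rate of Proposition~\ref{rpcgr-fe}(b) through \eqref{fe111} and applying Jensen's inequality; both yield the same $\mathcal{O}(1/\sqrt{K})$ rate.
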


\begin{proof}
(a) Using \eqref{gap2} in \eqref{gap3}, we obtain the following inequality which is similar with \eqref{cgr-eq4}
\begin{align}
\notag\|x_{k+1}-x^*\|^2&\le\|x_k-x^*\|^2-(1-2\gamma_k^2L^2)\|x_k-x_{k+\frac{1}{2}}\|^2+2\gamma_k^2\|w_k-w_{k+\frac{1}{2}}\|^2 \\
\notag&+2\gamma_kw_{k+\frac{1}{2}}^T(x^*-x_{k+\frac{1}{2}})-2\gamma_kF(x^*)^T(\Pi_X(x_{k+\frac{1}{2}})-x^*)+2\gamma_kC\mbox{dist}(x_{k+\frac{1}{2}},X).
\end{align}
Similarly with \eqref{gap4}, we have
\begin{align}
\notag2&\gamma_kF(y)^T(\Pi_X(x_{k+\frac{1}{2}})-y)\le\|x_k-y\|^2-\|x_{k+1}-y\|^2-(1-2\gamma_k^2L^2)\|x_k-x_{k+\frac{1}{2}}\|^2\\\notag&+2\gamma_k^2\|w_k-w_{k+\frac{1}{2}}\|^2+2\gamma_kw_{k+\frac{1}{2}}^T(y-x_{k+\frac{1}{2}})+2\gamma_kC\mbox{dist}(x_k,X)+4\gamma_k^2C^2+\tfrac{1}{4}\|x_k-x_{k+\frac{1}{2}}\|^2. \\
\notag&\le\|x_k-y\|^2-\|x_{k+1}-y\|^2-\tfrac{1}{4}\|x_k-x_{k+\frac{1}{2}}\|^2+2\gamma_k^2\|w_k-w_{k+\frac{1}{2}}\|^2\\\notag&+2\gamma_kw_{k+\frac{1}{2}}^T(y-x_{k+\frac{1}{2}})+2\gamma_kC\mbox{dist}(x_k,X)+4\gamma_k^2C^2-(\tfrac{1}{2}-2\gamma_k^2L^2)\|x_k-x_{k+\frac{1}{2}}\|^2,
\end{align}
We now define an auxiliary sequence $\{u_k\}$ such that 
$$ u_{k+1}:= \Pi_X (u_k - \gamma_k w_{k}), $$
where $u_0 \in X$. We may then express $2\gamma_kw_{k+\frac{1}{2}}^T(y-x_{k+\frac{1}{2}})$ as follows.
\begin{align}\notag
 2\gamma_k w_{k+\frac{1}{2}}^T(\vvs{y}-x_{k+\frac{1}{2}}) & = 2\gamma_k w_{k+\frac{1}{2}}^T(\vvs{y}-u_k) + 2\gamma_k w_{k+\frac{1}{2}}^T(u_k-x_{k+\frac{1}{2}}) \\
\label{rl-uk3}
	& \leq \|u_k-y\|^2 - \|u_{k+1}-y\|^2 + \gamma_k^2 \|w_{k+\frac{1}{2}}\|^2 + 2\gamma_k w_{k+\frac{1}{2}}^T(u_k-x_{k+\frac{1}{2}}). 
\end{align}
Summing over $k$ and invoking \eqref{rl-uk3}, we obtain the following bound:
\begin{align}
\notag&\sum_{k=\lfloor K/2\rfloor+\bar{k}}^{K+\bar{k}}2\gamma_kF(y)^T(\Pi_X(x_{k+\frac{1}{2}})-y)\le\|x_0-y \|^2+\sum_{k=\lfloor K/2\rfloor+\bar{k}}^{K+\bar{k}}(2\gamma_kC\mbox{dist}(x_k,X)-\tfrac{1}{4}\|x_k-x_{k+\frac{1}{2}}\|^2) \\\notag
&+\sum_{k=\lfloor K/2\rfloor+\bar{k}}^{K+\bar{k}}4C^2\gamma_k^2+\sum_{k=\lfloor K/2\rfloor+\bar{k}}^{K+\bar{k}}(4\gamma_k^2\|w_k\|^2+5\gamma_k^2\|w_{k+\frac{1}{2}}\|^2)+\|u_0-y\|^2\\\notag&+\sum_{k=\lfloor K/2\rfloor+\bar{k}}^{K+\bar{k}}2\gamma_k w_{k+\frac{1}{2}}^T(u_k-x_{k+\frac{1}{2}})-\sum_{k=\lfloor K/2\rfloor+\bar{k}}^{K+\bar{k}}(\tfrac{1}{2}-2\gamma_k^2L^2)\|x_k-x_{k+\frac{1}{2}}\|^2.
\end{align}
Dividing both sides by $\sum_{k=\lfloor K/2\rfloor+\bar{k}}^{K+\bar{k}}\gamma_k$, we have
\begin{align}
\notag2 &F(y)^T(\bar{y}_{K,\bar k}-y)\le\tfrac{B_2}{\sum_{k=\lfloor K/2\rfloor+\bar{k}}^{K+\bar{k}}\gamma_k}+\tfrac{\sum_{k=\lfloor K/2\rfloor+\bar{k}}^{K+\bar{k}}\left(2\gamma_k C{\scriptsize \mbox{dist}}(x_k,X)-\tfrac{1}{4}\|x_k-x_{k+\frac{1}{2}}\|^2\right)}{\sum_{k=\lfloor K/2\rfloor+\bar{k}}^{K+\bar{k}}\gamma_k} \\
\notag&+\tfrac{\sum_{k=\lfloor K/2\rfloor+\bar{k}}^{K+\bar{k}}4C^2\gamma_k^2}{\sum_{k=\lfloor K/2\rfloor+\bar{k}}^{K+\bar{k}}\gamma_k}+\tfrac{\sum_{k=\lfloor K/2\rfloor+\bar{k}}^{K+\bar{k}}(4\gamma_k^2\|w_k\|^2+5\gamma_k^2\|w_{k+\frac{1}{2}}\|^2)+\sum_{k=\lfloor K/2\rfloor+\bar{k}}^{K+\bar{k}}2\gamma_k w_{k+\frac{1}{2}}^T(u_k-x_{k+\frac{1}{2}})}{\sum_{k=\lfloor K/2\rfloor+\bar{k}}^{K+\bar{k}}\gamma_k} \\\notag &- \tfrac{\sum_{k=\lfloor K/2\rfloor+\bar{k}}^{K+\bar{k}}\left(\tfrac{1}{2}-2\gamma_k^2L^2\right)\|x_k-x_{k+\frac{1}{2}}\|^2}{\sum_{k=\lfloor K/2\rfloor+\bar{k}}^{K+\bar{k}}\gamma_k},
\end{align}
where $\|x_0-y \|^2+\|u_0-y\|^2\le2\|x_0-x^* \|^2+2\|x^*-y \|^2+2\|u_0-x^* \|^2+2\|x^*-y \|^2\le2\|x_0-x^* \|^2+2\|u_0-x^* \|^2+2D_X^2\triangleq B_2$.
By taking supremum over $y \in X$, we obtain the following inequality:
\begin{align} \notag G(&\bar{y}_{K,\bar k}) \triangleq 
\sup_{y \in X} F(y)^T(\bar{y}_{K,\bar k}-y) \le\tfrac{B_2}{2\sum_{k=\lfloor K/2\rfloor+\bar{k}}^{K+\bar{k}}\gamma_k}+\tfrac{\sum_{k=\lfloor K/2\rfloor+\bar{k}}^{K+\bar{k}}\left(2\gamma_k C{\scriptsize \mbox{dist}}(x_k,X)-\tfrac{1}{4}\|x_k-x_{k+\frac{1}{2}}\|^2\right)}{2\sum_{k=\lfloor K/2\rfloor+\bar{k}}^{K+\bar{k}}\gamma_k} \\
\notag&+\tfrac{\sum_{k=\lfloor K/2\rfloor+\bar{k}}^{K+\bar{k}}4C^2\gamma_k^2}{2\sum_{k=\lfloor K/2\rfloor+\bar{k}}^{K+\bar{k}}\gamma_k}+\tfrac{\sum_{k=\lfloor K/2\rfloor+\bar{k}}^{K+\bar{k}}(4\gamma_k^2\|w_k\|^2+5\gamma_k^2\|w_{k+\frac{1}{2}}\|^2)+\sum_{k=\lfloor K/2\rfloor+\bar{k}}^{K+\bar{k}}2\gamma_k w_{k+\frac{1}{2}}^T(u_k-x_{k+\frac{1}{2}})}{2\sum_{k=\lfloor K/2\rfloor+\bar{k}}^{K+\bar{k}}\gamma_k} \\ &- \tfrac{\sum_{k=\lfloor K/2\rfloor+\bar{k}}^{K+\bar{k}}\left(\tfrac{1}{2}-2\gamma_k^2L^2\right)\|x_k-x_{k+\frac{1}{2}}\|^2}{2\sum_{k=\lfloor K/2\rfloor+\bar{k}}^{K+\bar{k}}\gamma_k}. \label{rpc-5}
\end{align}
Taking unconditional expectation and using \eqref{cgr-eq6} in \eqref{rpc-5}, we have
\begin{align}
\notag \mathbb{E}[G(\bar{y}_{K,\bar k})] &\le\tfrac{B_2}{2\sum_{k=\lfloor K/2\rfloor+\bar{k}}^{K+\bar{k}}\gamma_k}+\tfrac{\sum_{k=\lfloor K/2\rfloor+\bar{k}}^{K+\bar{k}}\left(2\gamma_k C\mathbb{E}[{\scriptsize \mbox{dist}}(x_k,X)]-\tfrac{\rho}{4m\eta}\mathbb{E}[{\scriptsize \mbox{dist}}^2(x_k,X)]\right)}{2\sum_{k=\lfloor K/2\rfloor+\bar{k}}^{K+\bar{k}}\gamma_k} \\
\notag&+\tfrac{\sum_{k=\lfloor K/2\rfloor+\bar{k}}^{K+\bar{k}}4C^2\gamma_k^2}{2\sum_{k=\lfloor K/2\rfloor+\bar{k}}^{K+\bar{k}}\gamma_k}+\tfrac{\sum_{k=\lfloor K/2\rfloor+\bar{k}}^{K+\bar{k}}\left(\gamma_k^2(56\nu_1^2\mathbb{E}[{\scriptsize \mbox{dist}}^2(x_k,X)]+56\nu_1^2D_X^2+28\nu_1^2\|x^*\|^2+9\nu_2^2)\right)}{2\sum_{k=\lfloor K/2\rfloor+\bar{k}}^{K+\bar{k}}\gamma_k}\\
\notag&- \tfrac{\sum_{k=\lfloor K/2\rfloor+\bar{k}}^{K+\bar{k}}\left(\tfrac{1}{2}-2\gamma_k^2(L^2+5\nu_1^2)\right)\|x_k-x_{k+\frac{1}{2}}\|^2}{2\sum_{k=\lfloor K/2\rfloor+\bar{k}}^{K+\bar{k}}\gamma_k}\\
\notag&\le\tfrac{B_2}{2\sum_{k=\lfloor K/2\rfloor+\bar{k}}^{K+\bar{k}}\gamma_k}+\tfrac{\tfrac{4m\eta C^2}{\rho}\sum_{k=\lfloor K/2\rfloor+\bar{k}}^{K+\bar{k}}\gamma_k^2}{2\sum_{k=\lfloor K/2\rfloor+\bar{k}}^{K+\bar{k}}\gamma_k}+\tfrac{\sum_{k=\lfloor K/2\rfloor+\bar{k}}^{K+\bar{k}}4C^2\gamma_k^2}{2\sum_{k=\lfloor K/2\rfloor+\bar{k}}^{K+\bar{k}}\gamma_k}\\&+\tfrac{\left(56\nu_1^2(\bar{b}/\bar{k})+56\nu_1^2D_X^2+28\nu_1^2\|x^*\|^2+9\nu_2^2\right)\sum_{k=\lfloor K/2\rfloor+\bar{k}}^{K+\bar{k}}\gamma_k^2}{2\sum_{k=\lfloor K/2\rfloor+\bar{k}}^{K+\bar{k}}\gamma_k}. \label{rpc-6}
\end{align}
By substituting \eqref{low-bd2} and \eqref{upp-bd} in \eqref{rpc-6},
	we obtain that the following holds:
\begin{align}
\notag \mathbb{E}[G(\bar{y}_{K,\bar k})] &\le \mathcal{O}\left(\tfrac{1}{\sqrt{K}}\right).
\end{align}
\noindent (b) The result follows using the same avenue as Proposition~\ref{raterpml}(b).
\end{proof}

\begin{remark} \em Proving a.s. convergence of our randomized projection schemes
relies on imposing weak-sharpness assumptions as seen in
~\cite{iusem2018incremental}. However, gap statements do not impose such a
requirement. Since variance-reduction techniques cannot directly overcome the
impact of the infeasibility in the iterates, the resulting rate diminishes to
$\mathcal{O}(1/\sqrt{K})$, similar to that seen in the classical rate
statements for standard stochastic projection schemes for monotone stochastic
variational inequality
problems~\cite{juditsky2011solving,yousefian17smoothing}.

\end{remark}
\section{Numerical Results}
In this section, we apply the schemes on a stochastic Nash-Cournot equilibrium problem (Section~\ref{sec:5.1}) and the computation of the invariant distribution of a Markov chain (\ssc{Section~\ref{sec:5.2}}).
\subsection{A Stochastic Nash-Cournot Equilibrium Problem}\label{sec:5.1}
In this section, we present and compare the computational results of applying
the proposed schemes on a stochastic Nash-Cournot
equilibrium problem. This game is assumed that $\mathcal{I}$ firms compete over
a network of $\mathcal{J}$ nodes. Level of production and sales of firm $i \in
\mathcal{I}$ at node $j \in \mathcal{J}$ are denoted by $q_{ij}$ and $s_{ij}$,
respectively. Furthermore, we assume the cost of production at node $j$ is
$C_{ij}(q_{ij})$ and the price at node $j$ is denoted by $Q_j(\bar{s}_j,\xi) = a_j(\xi) - b_j \bar{s}_j$,
where $\bar{s}_j$ is the aggregate sales at node $j$ defined as $\bar{s}_j \triangleq \sum_{i=1}^{\mathcal{I}} s_{ij}$. For simplicity, we assume
the transportation costs are zero. Thus, each firm $i$ will solve a profit
maximization problem given by the following: \begin{align}
\notag \max \quad & \mathbb{E}\left[\sum_{j \in \mathcal{J}}(Q_j(\bar{s}_j,\xi)s_{ij}-C_{ij}(q_{ij}))\right] \\
\notag \st \quad & \sum_{j \in \mathcal{J}}q_{ij}=\sum_{j \in \mathcal{J}}s_{ij}, \quad 0 \leq q_{ij} \le \mathrm{cap}_{ij}, \quad s_{ij} \ge 0, \quad \forall j \in \mathcal{J}
\end{align}
This is an instance of a generalized Nash equilibrum problem (GNEP) with {\em
shared constraints} and a variational equilibrium (VE) (cf.~\cite{facchinei2007generalized}) of this (GNEP) given by
a solution to VI$(\mathcal{X}, F)$, where 
\begin{align*}   x & \triangleq \pmat{ s \\ q}, s \triangleq \pmat{s_{\bullet,1} \\
				   \vdots \\
				   s_{\bullet,\mathcal{J}}}, q \triangleq \pmat{q_{\bullet,1} \\
					\vdots \\
				   q_{\bullet,\mathcal{J}}}, \mbox{ and } z_{\bullet,j} \triangleq \pmat{ z_{1j} \\
			\vdots \\
		      x_{\mathcal{I}j}}, {\bf a} \triangleq \pmat{a_1 {\bf 1} \\
	\vdots \\ a_{\mathcal{J}} {\bf 1}},  \\
\notag {\bf B} & \triangleq \pmat{b_1 {\bf D} \\ & \ddots \\ & & b_{\mathcal{J}} {\bf D}}, 
  {\bf D} \triangleq (I + {\bf 1}{\bf 1}^T), c \triangleq \pmat{c_{\bullet,1} \\ \vdots \\ c_{\bullet, \mathcal{J}}}, c_{\bullet,j} \triangleq \pmat{c_{1j} \\ \vdots \\ c_{\mathcal{I}j}}, F(x)  \triangleq \pmat{ Bs - {\bf a} \\ c }, \\ \mbox{ and } 
\mathcal{X} & \triangleq \left\{ x = (s,q) \mid  
\sum_{j \in \mathcal{J}}q_{ij}=\sum_{j \in \mathcal{J}}s_{ij}, i = 1, \cdots, \mathcal{I}, \ 0 \leq q_{ij} \le \mathrm{ cap}_{ij}, \quad s_{ij} \ge 0, \  i \in \mathcal{I}, j \in \mathcal{J}\right\}. 
\notag
\end{align*}
\ssc{%It follows that the equilibrium conditions of this problem can be captured by a variational inequality problem VI$(\mathcal{X},F)$, where $ \mathcal{X} \triangleq \prod_{i=1}^{\mathcal{I}} \mathcal{X}_i$, 
%$F=(F_1(x);...;F_\mathcal{I}(x))$ with $F_i(x)=\mathbb{E}[\nabla_{x_i}f_i(x,\xi)]$. 
Before proceeding, we prove that VI$(\mathcal{X},F)$ satisfies the required assumptions where $\mathcal{X} \subseteq \Real^n$ and $F: \Real^n \to \Real^n$:
\begin{enumerate}
\item[(i)] $F(x)$ is Lipschitz on $\Real^n$ by noting that for any $x_1, x_2 \in \mathbb{R}^n$, the following holds.   
$$ \|F(x_1) - F(x_2)\| = \|{\bf B} (s_1-s_2)\| \leq \|{\bf B} \|\|s_1-s_2\| \leq \|{\bf B}\|\|x_1-x_2\|. $$
\item[(ii)] $F(x)$ is monotone on $\Real^n$ by noting that for any $x_1, x_2 \in \mathbb{R}^n$, the following holds.   
$$ (F(x_1) - F(x_2))^T(x_1-x_2) = ({\bf B}(s_1-s_2))^T(s_1-s_2) > 0, $$
since ${\bf B} \succ 0$, a consequence of $b_j > 0$ for all $j$ and $(I+{\bf 1}{\bf 1}^T \succ 0$. Note that $F$ is merely monotone by noting that if $x_1 = ({\bf 0}, q_1)$ and $x_2 = ({\bf 0},q_2)$, we have 
$$ (F(x_1) - F(x_2))^T(x_1-x_2) = 0. $$
\item[(iii)] $\mathcal{X}$ is a compact set by recalling that $0 \leq q_{ij} \leq \mbox{cap}_{ij}$ for every $i,j$ and by observing that for any $i,j$.  
$$ s_{ij} \leq \sum_{j \in \mathcal{J}} s_{ij} =  
\sum_{j \in \mathcal{J}} q_{ij} \leq 
\sum_{j \in \mathcal{J}} \mbox{cap}_{ij}. $$
Furthermore, $\|F(x)\| \leq (\|A\|+\|c\|) \|s\| \leq (\|A\|+\|c\|) \|\mbox{cap}\|,$ for all $x$ and $\mbox{cap}$ denotes the vector of capacities over all nodes and firms. One assumption that is generally more challenging to verify is the weak-sharpness requirement. There may be approaches for claiming that such a condition holds by leveraging weak sharpness (cf.~\cite[Ch.~3]{facchinei2007finite}) and this remains a goal of future work. 
\end{enumerate}}

\noindent {\em Problem parameters.} We assume that there are
$\mathcal{I}=5$ firms and $\mathcal{J}=4$ nodes whille the capacity
$\mathrm{cap}_{ij}=300$, for all $i,j$. We assume that  $c_{ij}=1.5$ and
$d_{ij}$ is a positive constant for all $i,j$.  Furthermore,  for all $j$,
$b_j=0.05$ and $a_j(\xi) \sim U[49.5,50.5]$ where $U[a,b]$ denotes the uniform distribution on the interval $[a,b]$.\\  

\noindent {\em Algorithm parameters.} \vvs{We choose $\gamma =  0.1$ which satisfies the requirements of ({\bf v-SPRG}) and ({\bf v-SSE}) by noting that $L = 0.3$ and $\nu_1 = 0$. In addition, we choose $\gamma_k = \tfrac{\gamma}{\sqrt{k}}$ for ({\bf r-SPRG}) and ({\bf r-SSE}). Finally, in variance-reduced settings, we choose $N_k = \lfloor k^{1.1}\rfloor$ for $k \geq 0$. }

%\ssc{choose} square-summable and non-summable step sizes
%in our experiments and use gap function as our metric. 
%\ssc{To verify the satisfaction of assumptions, we know $F_i(x)=[2b_1x_{i1}+b_1\sum_{j\neq i}x_{ij}-\mathbb{E}[a_1];\dots;-c_{i1};\dots]$. Then we can derive $F_i(x)-F_i(y)=\sum_{t\in \mathcal{J}}2b_t(x_{it}-y_{it})+b_t\sum_{j\neq i}(x_{ij}-y_{ij})$. It is easy to show the mapping satisfies Lipschitz continuous and mere monotonicity. Since $q_{ij}$'s are bounded and $\sum_{j \in \mathcal{J}}q_{ij}=\sum_{j \in \mathcal{J}}s_{ij}$, it follows that $s_{ij}$'s are bounded and the feasible set is bounded as well.  }

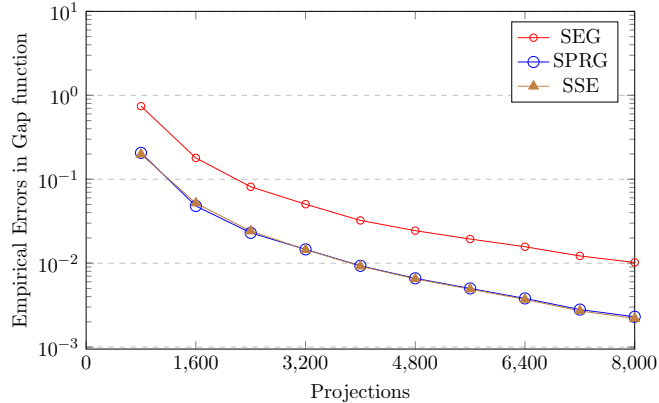
\begin{figure}[htp]
\centering
\begin{tikzpicture}[thick,scale=.7,every node/.style={scale=1}]
\begin{axis}[
    width=12cm,
    height=8cm,
    xlabel={Projections},
    ylabel={Empirical Errors in Gap function},
    ymode=log,
    xmin=0, xmax=8000,
    ymin=0, ymax=10,
    xtick={0,1600,3200,4800,6400,8000},
    ytick={0,0.0001,0.001,0.01,0.1,1,10,100,1000},
    legend pos=north east,
    ymajorgrids=true,
    grid style=dashed,
]
    
 \addplot[
    color=red,
    mark=o,
    ]
    coordinates {
    (800,0.7431)(1600,0.1794)(2400,0.0813)(3200,0.0504)(4000,0.0323)(4800,0.0244)(5600,0.0194)(6400,0.0157)(7200,0.0122)(8000,0.0102)
    };
   % \legend{EG}
   \addlegendentry{SEG}

   \addplot[
    color=blue,
    mark=o,
    mark size = 3pt
    ]
    coordinates {
    (800,0.2063)(1600,0.0480)(2400,0.0230)(3200,0.0146)(4000,0.0093)(4800,0.0066)(5600,0.0050)(6400,0.0038)(7200,0.0028)(8000,0.0023)
    };
    %\legend{ML}
    \addlegendentry{SPRG}
    
    \addplot[
    color=brown,
    mark=triangle*,
    mark size = 3pt
    ]
    coordinates {
    (800,0.1981)(1600,0.0519)(2400,0.0241)(3200,0.0145)(4000,0.0092)(4800,0.0065)(5600,0.0049)(6400,0.0037)(7200,0.0027)(8000,0.0022)
    };
    %\legend{ML}
    \addlegendentry{SSE}
    
\end{axis}
\end{tikzpicture}
\caption{Convergence based on projections under mere monotonicity} \label{pro}
\end{figure}

\noindent Recall that SEG requires two projections onto the set while the two \vvs{proposed}
schemes just require one. We compare their performance under the same
number of projections in Fig. \ref{pro}. 
Next we change the size and parameters of the original game to ascertain
parametric sensitivity. In Table \ref{time}, we consider 
a set of 16 problems where the settings, the empirical
errors, and elapsed time are shown in Table \ref{time}. Table~\ref{time} shows
the performance after 4000 iterations  and we observe that while SEG has almost the
same empirical error as the others but requires significantly more computational effort.
To examine the impact of variance reduction, we enlarge the random set for
random variable $a_j$ to $[40, 60]$. Fig. \ref{pro1} shows comparison of
variance reduction schemes with original ones under the same number of
iterations. Table \ref{time1} shows the results generated from different nodes
in the system. The number of iterations used is 4000. We note that all schemes
show relatively similar sensitivity to the changes introduced. \\

\noindent {\bf Key findings.} The key findings are that ({\bf v-SPRG}) and ({\bf
v-SSE}) produce comparable empirical errors to ({\bf v-SEG}) but do so in approximately $65\%$ of the time
utilized by ({\bf v-SEG}). Moreover, the presence of variance reduction allows
for significant improvement in empirical error in comparision with the single-sample counterparts (See Table \ref{time1}).

\begin{table}[h]
\tiny
\caption{Errors and elapsed time comparison of the three schemes with different parameters}
\vspace{-0.2in}
\begin{center}
    \begin{tabular}{  l | c | l | c | l | c | l }
    \hline
     & ({\bf v-SEG}) & Time & ({\bf v-SSE}) & Time & ({\bf v-SPRG}) & Time \\ \hline
    $\mathcal{I}=5,\mathcal{J}=4,c_{ij}=2,b_j=0.05$ & 9.1e-3 & 2.4e3s & 9.1e-3 & 1.6e3s & 9.2e-3 & 1.5e3s \\
    $\mathcal{I}=6,\mathcal{J}=4,c_{ij}=2,b_j=0.05$ & 1.0e-2 & 2.4e3s & 1.1e-2 & 1.6e3s & 1.1e-2 & 1.5e3s \\
    $\mathcal{I}=5,\mathcal{J}=5,c_{ij}=2,b_j=0.05$ & 1.2e-2 & 2.5e3s & 1.2e-2 & 1.8e3s & 1.2e-2 & 1.5e3s \\
    $\mathcal{I}=6,\mathcal{J}=5,c_{ij}=2,b_j=0.05$ & 1.2e-2 & 2.5e3s & 1.1e-2 & 1.9e3s & 1.3e-2 & 1.5e3s \\
    $\mathcal{I}=5,\mathcal{J}=4,c_{ij}=1,b_j=0.05$ & 9.1e-3 & 2.3e3s & 9.2e-3 & 1.7e3s & 9.3e-3 & 1.4e3s \\
    $\mathcal{I}=6,\mathcal{J}=4,c_{ij}=1,b_j=0.05$ & 1.1e-2 & 2.3e3s & 1.1e-2 & 1.8e3s & 1.1e-2 & 1.4e3s \\
    $\mathcal{I}=5,\mathcal{J}=5,c_{ij}=1,b_j=0.05$ & 1.2e-2 & 2.4e3s & 1.3e-2 & 1.8e3s & 1.3e-2 & 1.5e3s \\
    $\mathcal{I}=6,\mathcal{J}=5,c_{ij}=1,b_j=0.05$ & 1.2e-2 & 2.4e3s & 1.3e-2 & 1.9e3s & 1.3e-2 & 1.5e3s \\
    $\mathcal{I}=5,\mathcal{J}=4,c_{ij}=2,b_j=0.1$ & 1.1e-2 & 2.4e3s & 1.1e-2 & 1.6e3s & 1.2e-2 & 1.4e3s \\
    $\mathcal{I}=6,\mathcal{J}=4,c_{ij}=2,b_j=0.1$ & 1.1e-2 & 2.4e3s & 1.0e-2 & 1.6e3s & 1.1e-2 & 1.5e3s \\
    $\mathcal{I}=5,\mathcal{J}=5,c_{ij}=2,b_j=0.1$ & 1.2e-2 & 2.4e3s & 1.1e-2 & 1.7e3s & 1.2e-2 & 1.4e3s \\
    $\mathcal{I}=6,\mathcal{J}=5,c_{ij}=2,b_j=0.1$ & 1.1e-2 & 2.5e3s & 1.2e-2 & 1.8e3s & 1.3e-2 & 1.4e3s \\
    $\mathcal{I}=5,\mathcal{J}=4,c_{ij}=1,b_j=0.1$ & 1.0e-2 & 2.4e3s & 1.0e-2 & 1.7e3s & 1.1e-2 & 1.3e3s \\
    $\mathcal{I}=6,\mathcal{J}=4,c_{ij}=1,b_j=0.1$ & 1.1e-2 & 2.4e3s & 1.1e-2 & 1.6e3s & 1.1e-2 & 1.3e3s \\
    $\mathcal{I}=5,\mathcal{J}=5,c_{ij}=1,b_j=0.1$ & 1.2e-2 & 2.4e3s & 1.2e-2 & 1.8e3s & 1.1e-2 & 1.4e3s \\
    $\mathcal{I}=6,\mathcal{J}=5,c_{ij}=1,b_j=0.1$ & 1.1e-2 & 2.4e3s & 1.1e-2 & 1.7e3s & 1.2e-3 & 1.0e3s \\ \hline
    \end{tabular}
\end{center}
\label{time}
\end{table}

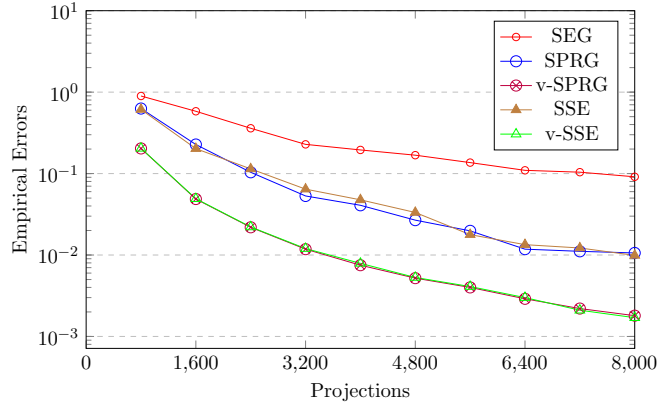
\begin{figure}[h]
\centering
\begin{tikzpicture}[thick,scale=.7,every node/.style={scale=1}]
\begin{axis}[
    width=12cm,
    height=8cm,
    xlabel={Projections},
    ylabel={Empirical Errors},
    ymode=log,
    xmin=0, xmax=8000,
    ymin=0, ymax=10,
    xtick={0,1600,3200,4800,6400,8000},
    ytick={0,0.0001,0.001,0.01,0.1,1,10,100,1000},
    legend pos=north east,
    ymajorgrids=true,
    grid style=dashed,
]
    
 \addplot[
    color=red,
    mark=o,
    ]
    coordinates {
    (800,0.8967)(1600,0.5821)(2400,0.3600)(3200,0.2284)(4000,0.1951)(4800,0.1682)(5600,0.1364)(6400,0.1096)(7200,0.1040)(8000,0.0913)
    };
   % \legend{EG}
   \addlegendentry{SEG}

   \addplot[
    color=blue,
    mark=o,
    mark size = 3pt
    ]
    coordinates {
    (800,0.6287)(1600,0.2274)(2400,0.1037)(3200,0.0531)(4000,0.0408)(4800,0.0268)(5600,0.0198)(6400,0.0118)(7200,0.0111)(8000,0.0106)
    };
    %\legend{ML}
    \addlegendentry{SPRG}
    
    \addplot[
    color=purple,
    mark=otimes,
    mark size = 3pt
    ]
    coordinates {
    (800,0.2035)(1600,0.0487)(2400,0.0219)(3200,0.0118)(4000,0.0075)(4800,0.0052)(5600,0.0040)(6400,0.0029)(7200,0.0022)(8000,0.0018)
    };
    %\legend{ML}
    \addlegendentry{v-SPRG}
    
    \addplot[
    color=brown,
    mark=triangle*,
    mark size = 3pt
    ]
    coordinates {
    (800,0.6127)(1600,0.2024)(2400,0.1141)(3200,0.0644)(4000,0.0476)(4800,0.0333)(5600,0.0178)(6400,0.0134)(7200,0.0122)(8000,0.0099)
    };
    %\legend{ML}
    \addlegendentry{SSE}
    
    \addplot[
    color=green,
    mark=triangle,
    mark size = 3pt
    ]
    coordinates {
    (800,0.2042)(1600,0.0491)(2400,0.0221)(3200,0.0120)(4000,0.0079)(4800,0.0053)(5600,0.0041)(6400,0.0030)(7200,0.0021)(8000,0.0017)
    };
    %\legend{ML}
    \addlegendentry{v-SSE}
    
\end{axis}
\end{tikzpicture}
\caption{Performance comparison between variance reduced schemes and original ones} \label{pro1}
\end{figure}

\begin{table}[htbp]
\tiny
\caption{Errors and elapsed time comparison of the schemes with different sizes under the same number of iterations}
\begin{center}
    \begin{tabular}{  c | l | l | l | l | l | l | l | l | l | l}
    \hline
     Network Size& SEG & Time & SSE & Time & ({\bf v-SSE}) & Time & SPRG & Time & ({\bf v-SPRG}) & Time \\ \hline
    20 & 1.0e-1 & 2.4e3s & 1.1e-1 & 1.7e3s & 7.5e-3 & 1.9e3s & 1.1e-1 & 1.5e3s & 7.4e-3 & 1.6e3s \\ 
    24 & 1.3e-1 & 2.4e3s & 1.4e-1 & 1.8e3s & 7.7e-3 & 2.0e3s & 1.3e-1 & 1.5e3s & 7.7e-3 & 1.7e3s \\ 
    28 & 1.8e-1 & 2.7e3s & 1.7e-1 & 1.9e3s & 7.9e-3 & 2.1e3s & 1.9e-1 & 1.6e3s & 8.0e-3 & 1.7e3s \\ 
    32 & 2.0e-1 & 2.8e3s & 1.9e-1 & 1.9e3s & 8.3e-3 & 2.2e3s & 2.0e-1 & 1.7e3s & 8.2e-3 & 1.8e3s \\ 
    36 & 2.5e-1 & 3.1e3s & 2.5e-1 & 2.2e3s & 8.7e-3 & 2.4e3s & 2.4e-1 & 2.0e3s & 8.8e-3 & 2.1e3s \\ 
    40 & 3.4e-1 & 3.2e3s & 3.5e-1 & 2.3e3s & 9.0e-3 & 2.5e3s & 3.5e-1 & 2.1e3s & 9.1e-3 & 2.2e3s \\ \hline
      \end{tabular}
\end{center}
\label{time1}
\end{table}

\subsection{Markov Invariant Distribution Approximation}\label{sec:5.2}

We now test the performance of the random projection schemes on an example
from~\cite{wang2015incremental} which requires  computing a low-dimensional
approximation to the invariant distribution of a Markov chain. We denote its
transition matrix by $P$ and its stationary distribution as $\pi$. The number
of states is assumed to be $1000$ and we want to approximate the states in a
low-dimensional subspace of $\mathbb{R}^{20}$ with a transformation matrix
$\Sigma$. Then we use a projection approach to approximate $\pi=P^T\pi$ as
$\Sigma x=\Pi_X(P^T\Sigma x)$, where $X \triangleq \{x \mid \Sigma x \ge 0,
e^T\Sigma x=1\}$. It has been proved
\cite{wang2015incremental,bertsekas2011temporal} that this projected equation is
equivalent to the variational inequality problem VI$(X, Sx)$.
%: \setlength{\abovedisplayskip}{10pt} \setlength{\belowdisplayskip}{10pt} \begin{align*} (x-x^*)^TSx^* \ge 0, \quad \forall x\in \mathbb{R}^{20}, \Sigma x \ge 0, e^T\Sigma x=1, \end{align*} 
where $S=\Sigma^T(I-P^T)\Sigma$. \vvs{Before proceeding, we verify that this problem satisfies the required assumptions.
\begin{enumerate}
\item[(i)] The mapping $Sx$ is a monotone map on $\Real^n$, since $(S+S^T)/2$ is a positive semidefinite matrix, a result that follows from $P$ being a transition matrix. In addition, $Sx$ is a Lipschitz continuous map with constant $\|S\|$.
\item[(ii)] The set $X$ is clearly compact and $\|F(x)\| \leq \|S\| \|x\|$, since $\|x\|$ is bounded on $X$. 
\end{enumerate}}

\noindent {\em Problem parameters.} \vvs{The transition matrix $P$ is randomly generated. We choose the columns of $\Sigma$ based on sinusoidal functions of various frequencies (see \cite{wang2015incremental} for details)}.\\  

\noindent {\em Algorithm parameters.} \vvs{We choose $\gamma_0 = 0.1$ which
satisfies the requirements of ({\bf v-SPRG}) and ({\bf v-SSE}) by noting that
$L = 1.0036$ (in our setting) and $\nu_1 = 0$. Finally, for ({\bf r-SPRG}) and
({\bf r-SSE}), we choose $\gamma_k$ = $\tfrac{\gamma}{\sqrt{k}}$, for
$k\ge0$. We choose $N_k = \lfloor k^{1.1}\rfloor$ in variance-reduced settings. }\\ 

%Table \ref{rpserrors} shows the empirical and theoretical errors of all schemes after $1$e$4$ iterations. 

\begin{comment}
\begin{table}[h]
\tiny
\caption{Empirical and theoretical errors for random projection variants}
\begin{center}
    \begin{tabular}[t]{ | l | l | l | l | l |}
    \hline
     & ({\bf r-SEG}) & ({\bf r-SPRG}) & ({\bf r-SSE}) \\ \hline
    Empirical & 0.0776 & 0.0758 & 0.0657 \\ \hline
    Theoretical & 2.0616 & 2.9183 & 2.0616 \\ \hline
    \end{tabular}
\end{center}
\label{rpserrors}
\end{table}
\end{comment}

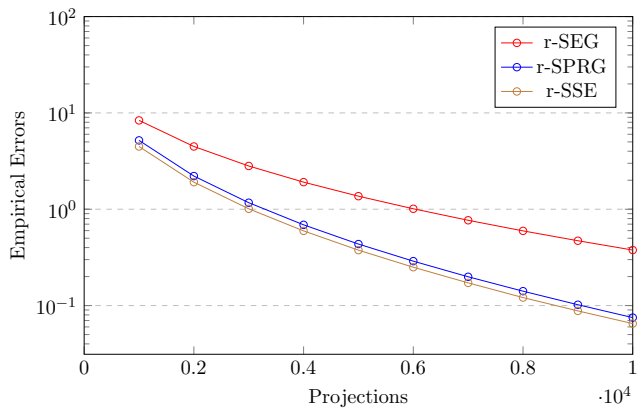
\begin{figure}[h]
\centering
\begin{tikzpicture}[thick,scale=.7,every node/.style={scale=1}]
\begin{axis}[
    width=12cm,
    height=8cm,
    xlabel={Projections},
    ylabel={Empirical Errors},
    ymode=log,
    xmin=0, xmax=10000,
    ymin=0, ymax=100,
    xtick={0,2000,4000,6000,8000,10000},
    ytick={0,0.0001,0.001,0.01,0.1,1,10,100},
    legend pos=north east,
    ymajorgrids=true,
    grid style=dashed,
]
    
 \addplot[
    color=red,
    mark=o,
    ]
    coordinates {
    (1000,8.362)(2000,4.482)(3000,2.812)(4000,1.911)(5000,1.366)(6000,1.011)(7000,0.768)(8000,0.596)(9000,0.471)(10000,0.377)
    };
   % \legend{EG}
   \addlegendentry{r-SEG}
   
   \addplot[
    color=blue,
    mark=o,
    ]
    coordinates {
    (1000,5.185)(2000,2.210)(3000,1.169)(4000,0.689)(5000,0.435)(6000,0.289)(7000,0.199)(8000,0.141)(9000,0.102)(10000,0.075)
    };
    %\legend{ML}
    \addlegendentry{r-SPRG}
   
    \addplot[
    color=brown,
    mark=o,
    ]
    coordinates {
    (1000,4.481)(2000,1.910)(3000,1.010)(4000,0.596)(5000,0.376)(6000,0.250)(7000,0.172)(8000,0.121)(9000,0.088)(10000,0.065)
    };
   % \legend{EG}
   \addlegendentry{r-SSE}
 
\end{axis}
\end{tikzpicture}
\caption{Empirical behavior for random projection variants}
\label{rpsite}
\end{figure}

Figure \ref{rpsite} illustrates the empirical behavior of all of the  schemes
considered.  We record the elapsed time and empirical errors of each scheme for
$10$ different transition matrices, as shown in Table \ref{rpstime} while the
comparison between the original variance-reduced schemes and their random
projection variants is shown in Table \ref{cpstime}.

\noindent {\bf Key insights.} In random projection variants, the projection onto each random constraint is cheap. Thus, the run-time benefits of ({\bf r-SSE}) are not obvious when compared with ({\bf r-SEG}) while ({\bf r-SPRG}) is still faster than others. This is because the second projection in ({\bf r-SSE}), while computable in closed form, is almost as expensive as a (cheap) projection. 
 \begin{table}[htbp]
\tiny
\caption{Comparison of schemes with differing transition matrices}
\begin{center}
    \begin{tabular}{  l | c | l | c | l | c | l  }
    \hline
     Matrix & ({\bf r-SEG}) & Time & ({\bf r-SSE}) & Time & ({\bf r-SPRG}) & Time \\ \hline
    No.1 & 7.7e-2 & 1.4e3s & 6.5e-2 &1.4e3s& 7.5e-2&0.7e3s \\ 
    No.2 & 4.0e-2 &1.3e3s& 3.9e-2 &1.4e3s& 4.0e-2&0.7e3s \\ 
    No.3 & 1.8e-2 &1.3e3s& 1.7e-2 &1.4e3s& 1.8e-2&0.7e3s \\ 
    No.4 & 5.2e-2 &1.4e3s& 4.9e-2 &1.4e3s& 5.1e-2&0.7e3s \\ 
    No.5 & 4.7e-2 &1.3e3s& 4.4e-2 &1.4e3s& 4.6e-2&0.7e3s \\ 
    No.6 & 5.9e-2 &1.3e3s& 5.5e-2 &1.4e3s& 5.8e-2&0.7e3s \\ 
    No.7 & 2.7e-2 &1.4e3s& 2.6e-2 &1.4e3s& 2.7e-2&0.7e3s \\
    No.8 & 5.8e-2 &1.3e3s& 5.3e-2 &1.4e3s& 5.7e-2&0.7e3s \\ 
    No.9 & 2.6e-2 &1.4e3s& 2.3e-2 &1.4e3s& 2.5e-2&0.7e3s \\ 
    No.10 & 3.3e-2 &1.4e3s& 3.1e-2 &1.4e3s& 3.2e-2&0.7e3s \\ \hline
    \end{tabular}
\end{center}
\label{rpstime}
\vspace{-0.2in}
\end{table}

\begin{table}[htb]
\tiny
\caption{Comparision  between original schemes and random projection variants}
\begin{center}
    \begin{tabular}{  l | c  c | c  c | c  c }
    \hline
      & ({\bf v-SEG}) & ({\bf r-SEG}) & ({\bf v-SSE}) & ({\bf r-SSE}) & ({\bf v-SRPG}) & ({\bf r-SRPG}) \\ \hline
    Error & 4.3e-3 & 7.7e-2 & 3.7e-3 & 6.5e-2 & 4.2e-3 & 7.5e-2\\ \hline
    Time & 2.8e4s & 1.4e3s & 1.6e4s & 1.4e3s & 1.5e4s & 0.7e3s\\  
     \hline
    \end{tabular}
\end{center}
\label{cpstime}
\vspace{-0.2in}
\end{table}

\section{Concluding remarks}
Extragradient schemes and their sampling-based counterparts represent a key
cornerstone of solving monotone deterministic and stochastic variational
inequality problems. Yet, the per-iteration complexity of such schemes is twice
as high as their single projection counterparts. We consider two avenues in
which the two projections are replaced by exactly one projection (a projected
reflected scheme) or a single projection onto the set and  another onto a
halfpace, the second of which is computable in closed form (a subgradient
extragradient scheme). In both instances, \vvs{under a variance-reduced
regime}, we derive a.s. convergence statements without imposing a compactness
requirement \vvs{and while allowing for state-dependent noise.}  Notably, the
sequences achieve a non-asymptotic rate of $\mathcal{O}(1/K)$ \vvs{in terms of
the expected gap function of an averaged sequence}, matching its deterministic
counterpart. Furthermore, when this set is given by the intersection of a large
number of convex sets, we develop a random projection variant for each scheme.
\vvs{Again, a.s. convergence guarantees are developed. Since the sequence of
iterates is no longer feasible, we proceed to develop rate guarantees for both
the expected infeasibility of iterates as well as the expected gap function of
a projected averaged sequence of iterates.} Empirical behavior of both schemes
show significant benefits in terms of per-iteration complexity compared to
extragradient counterparts.

\bibliographystyle{siam}
\scriptsize
\bibliography{extrasvi}
%\begin{appendix}

%\end{appendix}

\end{document}